\documentclass[12pt]{amsart}
\usepackage[babel]{csquotes}
\usepackage{enumitem}
\usepackage{amsmath,amsthm,amssymb,mathrsfs,amsfonts,verbatim,enumitem,color,leftidx}
\usepackage{kotex}
\usepackage{mathabx}
\usepackage{graphicx}
\usepackage[left=2.9cm,right=2.9cm,top=3.3cm,bottom=3.4cm,a4paper]{geometry}
\usepackage{etoolbox} 
\usepackage{tikz}
\usepackage{bbm}
\usepackage[all,tips]{xy}
\usepackage{graphicx,ifpdf}
\usepackage{stmaryrd}
\usepackage{multirow}
\usepackage{tikz-cd}

\ifpdf
   \DeclareGraphicsRule{*}{mps}{*}{}
\fi

\usepackage[right,displaymath,mathlines]{lineno}
\allowdisplaybreaks[0]

\definecolor{dartmouthgreen}{rgb}{0.05, 0.5, 0.06}

\usepackage[colorlinks]{hyperref}
\hypersetup{
linkcolor=blue,          
citecolor=dartmouthgreen,      
}

\usepackage{tikz}
\usepackage{here}
\linespread{1.2}
\usetikzlibrary{arrows,snakes,backgrounds}
\usetikzlibrary{matrix,arrows}

\newtheorem{thm}{Theorem}[section]
\newtheorem{lem}[thm]{Lemma}
\newtheorem{coro}[thm]{Corollary}
\newtheorem{prop}[thm]{Proposition}

\theoremstyle{definition}
\newtheorem{defn}[thm]{Definition}

\theoremstyle{remark}

\numberwithin{equation}{section}

\definecolor{esperance}{rgb}{0.0,0.5,0.0}

\title[Spectrum of non-uniform quotient of $PGL_4$]{Weak Ramanujan property of the standard non-uniform arithmetic quotient of $PGL_4$}

\begin{document}

\begin{abstract}
Let $F$ be a field of formal series over a finite field and $\mathcal{B}_d$ be the affine building associated to $PGL_d(F)$. Given a lattice $\Gamma$ in $PGL_d(F)$, the complex arising as a quotient $\Gamma\backslash \mathcal{B}_d$ is called weakly Ramanujan if every non-tivial discrete simultaneous spectrum of the colored adjacency operators $A_1,A_2,\ldots,A_{d-1}$ acting on $L^2(\Gamma\backslash \mathcal{B}_d)$ is contained in the simultaneous spectrum of those operators acting on $L^2(\mathcal{B}_d)$. In this paper, we prove that the standard non-uniform arithmetic quotient $PGL_4(\mathbb{F}_q[t])\backslash \mathcal{B}_4$ of $PGL_4(F)$ is weakly Ramanujan.
\end{abstract}


\author{Soonki Hong}
\address{Soonki Hong}
\curraddr{Department of Mathematics\\ Postech \\ Cheongam-ro, Pohang, 37673 \\ Republic of Korea}
\email{soonkihong@postech.ac.kr}

\author{Sanghoon Kwon}
\address{Sanghoon Kwon*}
\curraddr{Jinsil Building Room 506, Department of Mathematical Education\\ Catholic Kwandong University \\ Gangneung 25601 \\ Republic of Korea}
\email{shkwon1988@gmail.com \\ skwon@cku.ac.kr}

\thanks{2020 \emph{Mathematics Subject Classification.} Primary 05E45, 20G25; Secondary 47A25, 20E42}


\maketitle
\tableofcontents


\section{Introduction}\label{sec:1}

A finite $k$-regular graph $X$ is called a \emph{Ramanujan graph} if for every eigenvalue $\lambda$ of the adjacency matrix $A_X$ of $X$ satisfies either $\lambda=\pm k$ or $|\lambda|\le 2\sqrt{k-1}$. An eigenvalue $\lambda$ is called trivial if $\lambda=\pm k$. Since the interval $[-2\sqrt{k-1},2\sqrt{k-1}]$ is equal to the spectrum of the adjacency operator of $k$-regular tree $\mathcal{T}_k$, we note that a finite $k$-regular graph $X$ is Ramanujan if and only if every {non-trivial} spectrum of $A_X$ is contained in the spectrum of the adjacency operator $A$ on $L^2(\mathcal{T}_{k})$.

Ramanujan graphs can be constructed as quotients of the Bruhat-Tits tree associated to $PGL(2,\mathbb{Q}_p)$ by congruence subgroups of uniform lattices of $PGL(2,\mathbb{Q}_p)$ (see \cite{LPS}), using Ramanujan conjecture for classical modular forms. More examples were given by Morgenstern \cite{Mo1}, replacing $\mathbb{Q}_p$ by $\mathbb{F}_q(\!(t^{-1})\!)$. One significant difference between $PGL(2,\mathbb{Q}_p)$ and $PGL_2(\mathbb{F}_q(\!(t^{-1})\!))$ is that $PGL(2,\mathbb{F}_q(\!(t^{-1})\!))$ has a non-uniform lattice $\Gamma=PGL(2,\mathbb{F}_q[t])$. For congruence subgroups $\Lambda$ of $\Gamma$, the quotient graphs are infinite but the edges and vertices come with suitable weights $w$ so that the total volume associated to the weight is finite. Under these weights on vertices and edges, the adjacency operator $A$ on the $(q+1)$-regular tree $\mathcal{T}_{q+1}$ induces the weighted adjacency operator $A_{X}$ on the quotient $X=\Lambda\backslash\mathcal{T}_{q+1}$. 

In \cite{M}, the author defined \emph{Ramanujan diagrams} as wighted objects that satisfies the similar bound for non-trivial spectrum of $A_X$. A \emph{diagram} is a pair $(X,w)$ where $X$ is a bipartite graph and $w\colon VX\cup EX\to \left\{\frac{1}{n}\colon n=1,2,3,\ldots\right\}$ is the \emph{weight function} satisfying
$$\sum_{v\in VX}w(v)<\infty\qquad\textrm{ and }\qquad  \frac{w(u)}{w(e)},\frac{w(v)}{w(e)}\in\mathbb{Z}\textrm{ for all }e=(u,v)\in EX.$$
Let $f,g$ be functions on $VX$ and set
$$\langle f,g\rangle =\sum_{v\in VX}f(v)\overline{g(v)}w(v)$$ which defines $L^2_w(X)$, the space of functions $f$ for which $\langle f,f\rangle<\infty$, as a Hilbert space. A diagram $(X,w)$ is called a {Ramanujan diagram} if every non-trivial spectrum of the weight adjacency operator $A_X$ on $L^2_w(X)$ is contained in the interval $[-2\sqrt{q},2\sqrt{q}]$. For example, the diagram $(PGL(2,\mathbb{F}_q[t])\backslash \mathcal{T}_{q+1},w)$ with the weights
$$w(v)=\frac{1}{|\Gamma_v|}\qquad \textrm{ and }\qquad w(e)=\frac{1}{|\Gamma_e|} $$ is a Ramanujan diagram since the adjacency operator on $L^2_w(PGL(2,\mathbb{F}_q[t])\backslash\mathcal{T}_{q+1})$ has discrete spectrum $\pm(q+1)$ and continuous spectrum $[-2\sqrt{q},2\sqrt{q}]$ (see Figure~\ref{Spec}).

\vspace{1em}
\begin{center}
\begin{figure}[h]
\begin{tikzpicture}[scale=0.75]
        \draw[dashed,->] (-5,0) -- (5,0);

        \draw[line width=1.5pt,black] (2.2,0) -- (-2.2,0);
            
        \draw[fill] (4,0) circle (0.05);
        \draw[fill] (-4,0) circle (0.05);
        
        \node at (4,-0.4) {$q+1$};
        \node at (-4.2,-0.4) {$-q-1$};
        
        \node at (2.15,-0.4) {$2\sqrt{q}$};
        \node at (-2.3,-0.4) {$-2\sqrt{q}$};     
\end{tikzpicture}
\caption{Spectrum of $A_X$ on $L^2_w(PGL(2,\mathbb{F}_q[t])\backslash \mathcal{T}_{q+1})$}\label{Spec}
\end{figure}
\end{center}

\vspace{-1em}


The authors in \cite{CSZ} suggested a generalization of the notion of Ramanujan graphs to the simplicial complexes. They considered simplicial complexes obtained as finite quotients of the Bruhat-Tits building $\mathcal{B}_d$ associated to $PGL(d,F)$ for a non-Archimedean local field $F$. The \textit{colored adjacency operator} $A_j:L^2(\mathcal{B}_d)\rightarrow L^2(\mathcal{B}_d)$ is defined for $f\in L^2(\mathcal{B}_d)$ by
$$ A_j f(x)=\sum_{\substack{ y\sim x\\ \tau(y)=\tau(x)+j}}f(y),$$
 where $y\sim x$ implies that there is an edge between $y$ and $x$ in $\mathcal{B}_d$ and $\tau\colon \mathcal{B}_d^{(0)}\to\mathbb{Z}/d\mathbb{Z}$ is a color function (see Section~\ref{sec:2} for the precise definition). Let $\mathcal{S}^d$ be the simultaneous spectrum of the colored adjacency operators $(A_1,\ldots,A_{d-1})$ on $L^2(\mathcal{B}_d)$, which may be computed explicitly as a subset of $\mathbb{C}^{d-1}$ (Theorem~2.11 of \cite{LSV1}, see also \cite{Ma}). In fact, if the cardinality of the residue field of $F$ is $q$, then $\mathcal{S}^d$ is equal to the set $\sigma(\mathcal{S})$ for 
 $$\mathcal{S}=\{(z_1,\ldots,z_d)\colon |z_1|=\cdots=|z_d|=1\textrm{ and }z_1z_2\cdots z_d=1\}$$ and the map $\sigma\colon S\to\mathbb{C}^{d-1}$ given by $(z_1,\ldots,z_d)\mapsto (\lambda_1,\ldots,\lambda_{d-1})$ where
 \begin{equation}\label{lambda-para}\lambda_k=q^{\frac{k(d-k)}{2}}\sigma_k(z_1,z_2,\ldots,z_d).\end{equation}
A finite complex $X$ arising as a quotient of $\mathcal{B}_d$ is called \textit{Ramanujan} if every non-trivial automorphic spectrum $(\lambda_1,\ldots,\lambda_{d-1})$ of $A_{X,j}$ acting on $L^2(X)$ is contained in the simultaneous spectrum $\mathcal{S}^d$ of $A_j$ on $L^2(\mathcal{B}_d)$.
 In \cite{LSV1}, \cite{LSV2}, \cite{Winnie} and \cite{Sar}, the authors constructed higher dimensional Ramanujan complexes arising as finite quotients of $PGL(d,F)$. 

In \cite{S}, the author investigated non-uniform Ramanujan quotients of the Bruhat-Tits building $\mathcal{B}_d$ associated to $PGL(d,\mathbb{F}_q(\!(t^{-1})\!))$, generalizing the concept of finite Ramanujan complexes. She proved using the representation-theoretic argument that if $d>2$, then for $G=PGL(d,\mathbb{F}_q(\!(t^{-1})\!))$, $\Gamma=PGL(d,\mathbb{F}_q[t])$ and the Bruhat-Tits building $\mathcal{B}_d$ of $G$, the quotient $\Lambda\backslash \mathcal{B}_d$ is not Ramanujan for any finite index subgroup $\Lambda$ of $\Gamma$. In fact, the Ramanujan conjecture in positive characteristic for $PGL_d$ for $d>2$, achieved by Lafforgue, gives bounds on the cuspidal spectrum, but the other parts of the spectrum do not satisfy the same bounds as the cuspidal spectrum. See Figure~\ref{Spectrum} for the spectrum of $A_1$ on $PGL(3,\mathbb{F}_q[t])\backslash\mathcal{B}_3$, obtained in \cite{HK}.

\begin{center}
\begin{figure}[h]
\begin{tikzpicture}[scale=0.75]
        \draw[dashed,->] (-5,0) -- (5,0);
        \draw[dashed,->] (0,-5) -- (0,5);
        \def\a{0.5} \def\b{1.5}
        
        \draw[fill,line width=1pt,black] plot[samples=100,domain=0:360,smooth,variable=\t] ({(\b-\a)*cos(\t)+\a*cos((\b-\a)*\t/\a},{(\b-\a)*sin(\t)-\a*sin((\b-\a)*\t/\a});
        
        \def\a{1.2} \def\b{3.6}
        \draw[line width=1pt,black] plot[samples=100,domain=0:360,smooth,variable=\t] ({(\b-\a)*cos(\t)+\a*cos((\b-\a)*\t/\a},{(\b-\a)*sin(\t)-\a*sin((\b-\a)*\t/\a});
        
        \draw[fill] (4.5,0) circle (0.05);
        \draw[fill] (120:4.5) circle (0.05);
        \draw[fill] (240:4.5) circle (0.05);
        
        \node at (1.5,1.5) {$3q$};
        \draw[->] (1.5,1.3) -- (1.5,0.1);
        \node at (3.6,1.2) {$\sqrt{q}(q+\sqrt{q}+1)$};
        \draw[->] (3.6,1) -- (3.6,0.1);
        \node at (4.5,-1) {$q^2+q+1$};
        \draw[->] (4.5,-0.8) -- (4.5,-0.1);
        
\end{tikzpicture}
\caption{Spectrum of $A_1$ on $L^2_w(PGL(3,\mathbb{F}_q[t])\backslash \mathcal{B}_{3})$}\label{Spectrum}
\end{figure}
\end{center}

\vspace{-1em}

From a representation-theoretic perspective, one may consider the property of the complex $\Lambda\backslash\mathcal{B}_d$ being {weakly Ramanujan}. Given a lattice $\Gamma$ in $PGL_d(\mathbb{F}_q(\!(t^{-1})\!))$, the simplicial complex arising as the quotient $\Gamma\backslash \mathcal{B}_d$ is called \emph{weakly Ramanujan} if every non-trivial \emph{discrete} simultaneous spectrum of the colored adjacency operators $A_{w,1},A_{w,2},\ldots,A_{w,d-1}$ with natural weights acting on $L^2_w(\Gamma\backslash  \mathcal{B}_d)$ is contained in the simultaneous spectrum $\mathcal{S}^d$ of $A_1,A_2,\ldots,A_{d-1}$ on $L^2(\mathcal{B}_d)$. For every prime $d$ with $d\ge 3$, it is known that $\Gamma\backslash \mathcal{B}_d$ is weakly Ramanujan for every congruence subgroup $\Gamma$ of $PGL_d(\mathbb{F}_q[t])$. On the other hand, when $d$ is not a prime, there are infinitely many non-uniform quotients $\Gamma\backslash \mathcal{B}_d$ which are not weakly Ramanujan.

In this paper, we explore the simplest non-prime case. We provide a combinatorial characterization of the automorphic spectrum of the natural weighted adjacency operator on the non-uniform simplicial complex $PGL(4,\mathbb{F}_q[t])\backslash\mathcal{B}_4$ and prove that it is indeed weakly Ramanujan.

\begin{thm}\label{thm:wR}
The standard non-uniform arithmetic quotient $PGL_4(\mathbb{F}_q[t])\backslash \mathcal{B}_4$ of the building for $PGL(4,\mathbb{F}_q(\!(t^{-1})\!))$ is weakly Ramanujan.
\end{thm}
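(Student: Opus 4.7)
My strategy is to analyze the discrete joint spectrum of $(A_{w,1}, A_{w,2}, A_{w,3})$ on $L^2_w(X)$, where $X = PGL_4(\mathbb{F}_q[t]) \backslash \mathcal{B}_4$, by working on an explicit combinatorial fundamental domain and extracting the spectrum from a three-variable recurrence with boundary conditions. Using reduction theory for $\Gamma = PGL_4(\mathbb{F}_q[t])$ acting on lattice classes in $F^4$, in the spirit of what produced Figure~\ref{Spectrum} in the case $d=3$ of \cite{HK}, I would identify $X$ with a simplicial complex supported on a three-dimensional cone whose vertices are indexed by triples of non-negative integers coming from diagonal lattice representatives, and compute the stabilizer orders $|\Gamma_v|, |\Gamma_e|, |\Gamma_\sigma|$ cell by cell in order to write down both the natural weights and the weighted colored operators $A_{w,j}$ explicitly.

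In the interior of the cone the stabilizers are minimal and the $A_{w,j}$ become translation invariant, so the joint eigenvalue equation reduces there to the same recurrence as on $\mathcal{B}_4$. Its general solution is a Weyl-group superposition
\[
f \;=\; \sum_{\sigma\in S_4} c_\sigma(z)\; z_{\sigma(1)}^{N_1} z_{\sigma(2)}^{N_2} z_{\sigma(3)}^{N_3} z_{\sigma(4)}^{N_4},
\]
indexed by $S_4$ and by $(z_1,z_2,z_3,z_4)$ related to $(\lambda_1,\lambda_2,\lambda_3)$ through \eqref{lambda-para}, with $(N_1,\ldots,N_4)$ the standard lattice coordinates (subject to the normalization $\sum N_i = 0$) and the coefficients $c_\sigma(z)$ constrained by the non-invariant boundary relations on the three types of walls of the cone corresponding to the standard maximal parabolics $P_{3,1}$, $P_{2,2}$ and $P_{1,3}$.

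To isolate the discrete spectrum I would impose $f \in L^2_w(X)$, which on this unbounded cone translates into weighted decay in each coordinate direction. This decay condition eliminates every exponential mode whose $|z_{\sigma(i)}|$ exceeds the tempered threshold in the direction where $N_i \to \infty$, and, combined with the wall relations at $P_{3,1}$ and $P_{1,3}$, is strong enough to force $|z_1| = \cdots = |z_4| = 1$ with $z_1 z_2 z_3 z_4 = 1$, placing the eigenvalue on $\mathcal{S}^4$.

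The main obstacle, and the essentially non-prime feature absent in the case $d=3$, is the wall corresponding to $P_{2,2}$, where the effective stabilizer reduces to a central quotient of $PGL_2(\mathbb{F}_q[t]) \times PGL_2(\mathbb{F}_q[t])$. A priori, this wall can support a discrete eigenmode pulled back from a pair of discrete eigenvalues of the two $PGL_2$-factors, producing a potential non-tempered contribution to the discrete spectrum on $X$. I expect the heart of the argument to be the verification that any such candidate mode either fails to extend compatibly to the $P_{3,1}$ and $P_{1,3}$ walls, hence produces no non-zero $L^2_w$-eigenfunction on $X$, or, when it does extend, that the forced balance between the two $PGL_2$-contributions already places its parameters $(z_1,\ldots,z_4)$ on $\mathcal{S}^4$.
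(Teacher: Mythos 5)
Your proposal follows the paper's architecture closely: reduction theory to index vertices of the quotient by triples $(\ell,m,n)$ with $\ell\ge m\ge n\ge 0$ (the paper's Lemma~\ref{lem:3.1}), explicit stabilizer orders to produce the weights and the weighted operators (Section~\ref{sec:4}), a Weyl-group superposition $\sum_{\sigma\in S_4}C_{\sigma}\,z_{\sigma(1)}^{\ell}z_{\sigma(2)}^{m}z_{\sigma(3)}^{n}$ for the eigenfunctions (Theorem~\ref{thm:eigenftn}), and an analysis of which parameters survive the boundary and integrability constraints (Section~\ref{sec:6}). The identification of the $P_{2,2}$ wall as the genuinely new, non-prime feature is exactly the right intuition. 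However, there are two substantive problems with the way you predict the analysis resolves.

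First, the claim that the $L^2_w$ condition combined with the $P_{3,1}$ and $P_{1,3}$ wall relations is strong enough to force $|z_1|=\cdots=|z_4|=1$ is false as stated. The trivial parameter $(q\sqrt q,\sqrt q,q^{-1/2},q^{-3/2})$ (up to $4$th roots of unity) has $|z_i|\ne 1$, yet its eigenfunction degenerates to a single surviving Weyl term $C_{432}\,q^{(3\ell+m-n)/2}z_4^\ell z_3^m z_2^n$, which is genuinely $L^2_w$ against $w(v_{\ell,m,n})\asymp q^{-(3\ell+m-n)}$. You would need to invoke the exclusion of the trivial eigenvalue before making this claim, and even then the correct conclusion is not that the parameter lands on $\mathcal S^4$ but that there is no non-trivial $L^2_w$ eigenfunction at all: in every non-trivial case the wall relations leave at least one Weyl term $z_i^\ell z_j^m z_k^n$ with $|z_i^\ell z_j^m z_k^n|=1$ unkilled, and the resulting $|f|^2 w$ fails to be summable. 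This is what the paper's conditions $(B)$--$(D)$ are tracking, and it is the content of the remark ``there is no non-trivial discrete automorphic simultaneous spectrum.''

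Second, your two proposed outcomes for the $P_{2,2}$ obstruction miss the actual resolution. The $P_{2,2}$ parameter $\left(\sqrt q\,e^{i\theta},\pm\sqrt q\,e^{-i\theta},q^{-1/2}e^{i\theta},\pm q^{-1/2}e^{-i\theta}\right)$ does extend compatibly across all the walls and does belong to the automorphic spectrum (Theorem~\ref{thm:main}, third bullet), and it is not on $\mathcal S^4$ since $|z_1|=\sqrt q$. Neither of your alternatives — ``fails to extend'' or ``extends with parameters on $\mathcal S^4$'' — occurs. The third possibility is what actually happens: the mode extends to a bounded generalized eigenfunction, hence lies in the continuous spectrum, but is not $L^2_w$ (the $v_{\ell,\ell,0}$ direction has a surviving term of modulus one and divergent weighted $\ell^2$-sum). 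The weak Ramanujan property survives precisely because this genuinely non-tempered contribution is continuous and not discrete; indeed, it is the very phenomenon that makes the quotient only weakly, not strongly, Ramanujan for composite $d$. A correct proof must therefore distinguish sharply between the spectrum (built from approximate eigenfunctions, as in the paper's Lemma~\ref{lem:6.77} and Proposition~\ref{prop:spectest}) and the point spectrum (genuine $L^2$ eigenfunctions), and the decisive step is showing the latter is trivial — not that the former is tempered.
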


The following theorem describes the automorphic simultaneous spectrum of $A_{w,i}$, which yields Theorem~\ref{thm:wR}. We recall that the parametrization of $(\lambda_1,\lambda_2,\lambda_3)$ is given in equation~\eqref{lambda-para} by a complex 4-tuple $(z_1,z_2,z_3,z_4)$. 
\begin{thm}\label{thm:main}
If $(\lambda_1,\lambda_2,\lambda_3)$ is in the simultaneous spectrum of $A_i$, then $(z_1,z_2,z_3,z_4)$, up to permutation, belongs to one of the following cases:
\begin{itemize}
\item $(q\sqrt{q}e^{\frac{k\pi}{2}i},\sqrt{q}e^{\frac{k\pi}{2}i},\frac{1}{\sqrt{q}}e^{\frac{k\pi}{2}i},\frac{1}{q\sqrt{q}}e^{\frac{k\pi}{2}i})$, $k=0,1,2,3$;$\,\, \leftarrow\,\,$trivial spectrum
\item $(qe^{i\theta},e^{i\theta},e^{-3i\theta},\frac{1}{q}e^{i\theta})$, $\theta\in\mathbb{R}$;
\item $(\sqrt{q}e^{i\theta_1},\pm \sqrt{q}e^{-i\theta_1},\frac{1}{\sqrt{q}}e^{i\theta_1},\pm\frac{1}{\sqrt{q}}e^{-i\theta_1})$, $\theta_1\in\mathbb{R}$;
\item $(\sqrt{q}e^{i\theta_1},e^{i\theta_2},e^{-i(\theta_2+2\theta_1)},\frac{1}{\sqrt{q}}e^{i\theta_1})$, $\theta_1,\theta_2\in\mathbb{R}$;
\item $(e^{i\theta_1},e^{i\theta_2},e^{i\theta_3},e^{-i(\theta_1+\theta_2+\theta_3)})$, $\theta_1,\theta_2,\theta_3\in\mathbb{R}$.
\end{itemize}
\end{thm}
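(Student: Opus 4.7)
The plan is to reduce the problem to analyzing joint eigenfunctions of the weighted adjacency operators on an explicit combinatorial model of $X=PGL_4(\mathbb{F}_q[t])\backslash\mathcal{B}_4$, continuing the approach of \cite{HK}. First I would use the Soul\'e-type description of a fundamental domain for $PGL_4(\mathbb{F}_q[t])$ acting on $\mathcal{B}_4$: it is a ``sector'' whose vertex set is parametrized by the dominant cone $\mathbb{Z}_{\geq 0}^3$ in the cocharacter lattice, with stabilizer weights $w(v)=1/|\Gamma_v|$ that are explicit powers of $q$ depending only on the position of $v$. From this one reads off the three weighted colored adjacency operators $A_{w,1},A_{w,2},A_{w,3}$ as explicit difference operators on functions $f\colon\mathbb{Z}_{\geq 0}^3\to\mathbb{C}$, with constant coefficients in the interior and modified coefficients near the boundary walls.

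In the interior of the sector, following Macdonald's formula for spherical functions, joint eigenfunctions of $(A_{w,i})_{i=1,2,3}$ with eigenvalue parameters $(\lambda_1,\lambda_2,\lambda_3)$ encoded by $(z_1,z_2,z_3,z_4)$ via \eqref{lambda-para} and satisfying $z_1z_2z_3z_4=1$ are forced to take the ansatz form
$$ f(n_1,n_2,n_3)=\sum_{\sigma\in S_4} c_\sigma(z)\prod_{i=1}^3 z_{\sigma(i)}^{n_i}, $$
where the $c_\sigma(z)$ are rational in the $z_j$. The eigenvalue equations at the walls of the sector and at the apex translate into a finite linear system in the coefficients $c_\sigma$. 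The set of $z$ for which this system admits a non-zero solution inside the torus $\{\prod z_i=1\}$ controls exactly the achievable spectral parameters, and the goal is to identify this solution locus with the union of the five algebraic strata listed in Theorem~\ref{thm:main}.

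The trivial case should correspond to the constant eigenfunction $f\equiv 1$ together with its three twists by the $\mathbb{Z}/4\mathbb{Z}$-valued color character; the final case recovers the tempered parameter locus $|z_i|=1$, i.e.\ $\mathcal{S}^4$ itself; and the three middle cases should emerge as residual or Eisenstein-type contributions associated with the three conjugacy classes of proper parabolics of $PGL_4$, of Levi type $(3,1)$, $(2,2)$, and $(2,1,1)$ respectively, with the $\pm$-dichotomy in the $(2,2)$-case reflecting the two characters of the centralizer of the $(2,2)$-Levi in the Weyl group. The main obstacle I expect is the explicit algebraic verification at this final step: factoring the $S_4$-symmetric boundary system into exactly these five strata requires careful manipulation of the Macdonald $c$-functions $c_\sigma(z)$ and of the symmetric polynomials appearing in \eqref{lambda-para}. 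A subsidiary difficulty, needed for the weakly Ramanujan conclusion of Theorem~\ref{thm:wR} rather than for Theorem~\ref{thm:main} itself, is to show that no genuine $L^2_w$-eigenfunction can be supported on any of the three middle strata, so that every discrete spectral parameter is forced into the tempered stratum $|z_i|=1$.
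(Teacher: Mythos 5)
Your overall setup is right and matches the paper's: you parametrize the fundamental domain by $\mathbb{Z}_{\ge 0}^3$, write the weighted operators $A_{w,i}$ as difference operators, and look for joint eigenfunctions of the Macdonald ansatz form $\sum_{\sigma}c_\sigma(z)\prod z_{\sigma(i)}^{n_i}$ (the paper's Theorem~\ref{thm:eigenftn}). But there is a genuine gap in the step you call ``the set of $z$ for which this system admits a non-zero solution inside the torus $\{\prod z_i=1\}$ controls exactly the achievable spectral parameters.'' This is not the right filtering mechanism. For \emph{every} $z$ with $\prod z_i=1$ and distinct coordinates, the recurrences at the apex and along the walls have a unique solution with $f(v_{0,0,0})$ prescribed, and the coefficients $C_{ijk}$ are given by explicit rational expressions (the paper's $C_{\tau(1)\tau(2)\tau(3)}$). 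So ``having a solution'' imposes no restriction at all; the formal joint eigenvalue set is all of $\{\prod z_i=1\}\cap S$, far larger than the five listed strata.

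What actually cuts the parameter set down is a growth/temperedness condition on the eigenfunction $f_z$ — the paper's condition $(B)$: along the three singular rays $(\ell,0,0)$, $(\ell,\ell,0)$, $(\ell,\ell,\ell)$ the values $f(v_{\ell,\ldots})$ must grow no faster than the inverse square root of the weight, uniformly up to an arbitrarily small exponent. Proving $(A)\Rightarrow(B)$ requires a comparison between an approximate $L^2$-eigenfunction $h$ and the formal eigenfunction $f_z$ (the paper's Lemma~\ref{lem:6.77}, an Appendix-level estimate), and $(B)\Rightarrow(C)$ requires checking, case by case, which vanishing relations among the $C_{ijk}$ (e.g.\ $z_i=qz_j$) kill the dangerously growing exponentials — this is where the five strata emerge, not from the linear system having solutions. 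You also need the converse direction: for parameters in the list, $f_z$ has moderate growth and the truncations $(1-\epsilon)^\ell f_z(v_{\ell,m,n})$ form approximate $L^2$-eigenfunctions (the paper's Proposition~\ref{prop:spectest}). Your last paragraph treats ``no $L^2$-eigenfunction on the middle strata'' as a subsidiary issue only needed for Theorem~\ref{thm:wR}, but in fact the analogous growth analysis is the central mechanism for Theorem~\ref{thm:main} itself.

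Your heuristic matching of the middle strata to Eisenstein series from the $(3,1)$, $(2,2)$, $(2,1,1)$ Levi types is a correct and illuminating observation (the paper records the correspondence with partitions of $4$ in the introduction), but it is a gloss rather than a proof; the paper does not invoke the parabolic induction picture and instead carries out a direct combinatorial analysis of the $C_{ijk}$.
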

Theorem \ref{thm:main} includes the case that $z_i=z_j$ for some $i,j$ except the fourth case. 

The parametrization of $(\lambda_1,\lambda_2,\lambda_3)$ enable us to compute the simultaneous eigenfunction $f$ of $(\lambda_1,\lambda_2,\lambda_3)$ (see Section \ref{sec:5} and Appendix \ref{sec:appendix}). 
 For a given eigenfunction $f$ of a simultaneous eigenvalue $(\lambda_1,\lambda_2,\lambda_3)$, the function $(\ell,m,n)\mapsto f(v_{\ell,m,n})$ is expressed as the linear combination of $z_i^\ell z_j^m z_k^n$, where $v_{\ell,m,n}$ is an vertex in $\Gamma\backslash \mathcal{B}_d$ indexed by 3-tuple $(\ell,m,n)$ with $\ell\geq m\geq n\geq 0$ (see Section \ref{sec:3}). The coefficient of $z_i^\ell z_j^m z_k^n$ can be a polynomial with variables $\ell,m$ and $n.$

Let $f=f_{\textbf{z}}$ be the simultaneous eigenfunction of a simultaneous eigenvalue $(\lambda_1,\lambda_2,\lambda_3)$ parametrized by $\textbf{z}=(z_1,z_2,z_3,z_4)$ through the equation \eqref{lambda-para}. The proof of Theorem \ref{thm:main} comes from demonstrating the equivalence of following conditions:
\begin{itemize}
\item[$(A)$]\label{(a)} The 3-tuple $(\lambda_1,\lambda_2,\lambda_3)$ is contained in the simultaneous spectrum of $A_{w,i}$.
\item[$(B)$] For any $c>0$, we have 
$$f(v_{\ell,0,0})=(q^{(3+c)\ell/2}),f(v_{\ell,\ell,0})=O(q^{(2+c)\ell}),f(v_{\ell,\ell,\ell})=O(q^{({3}+c)\ell/2}).$$
\item[$(C)$] The 4-tuple $(z_1,z_2,z_3,z_4)$ belongs to the one of the cases in Theorem \ref{thm:main}.
\item[$(D)$] The coefficient of $z_i^\ell z_j^m z_k^n$ in $f(v_{\ell,m,n})$ is zero when $|z_i^\ell z_j^n z_k^n|>1$.
\end{itemize}
The following diagram summarizes the proof procedure that will appear in Section \ref{sec:6}:
\[
\begin{tikzcd}
& (A)\arrow[Rightarrow]{dr}{\text{Section } \ref{sec:6.1}}   \\
(D)\arrow[Rightarrow]{ur}{\text{Section } \ref{sec:6.3}}& & (B)\arrow[Rightarrow]{dl}{\text{Section } \ref{sec:6.2}}\\
&(C)\arrow[Rightarrow]{ul}{\text{Section } \ref{sec:6.2}}&
\end{tikzcd}
 \] 
 
In fact, there is no non-trivial discrete automorphic simultaneous spectrum of $A_{w,i}$ on $L^2_w(PGL_4(\mathbb{F}_q[t])\backslash \mathcal{B}_4)$. We remark that there is a one-to-one correspondence between the set of 4-tuples $\{(|z_1|,|z_2|,|z_3|,|z_4|)\}$ and the set of integer partitions of 4 by matching $(q^{\frac{n-1}{2}},q^{\frac{n-3}{2}},\ldots,q^{\frac{1-n}{2}})$ to $n$ as follows.
\begin{center}
\begin{tabular}{ccc}
$\left(q\sqrt{q},\sqrt{q},\frac{1}{\sqrt{q}},\frac{1}{q\sqrt{q}}\right)$ &$\longleftrightarrow$ &4 \\
$\left(q,1,\frac{1}{q},q\right)$ &$\longleftrightarrow$ &3+1\\
$\left(\sqrt{q},\frac{1}{\sqrt{q}},\sqrt{q},\frac{1}{\sqrt{q}}\right)$ &$\longleftrightarrow$ & 2+2\\
$\left(\sqrt{q},\frac{1}{\sqrt{q}},1,1\right)$ &$\longleftrightarrow$ & 2+1+1\\
$(1,1,1,1)$ &$\longleftrightarrow$ & 1+1+1+1
\end{tabular}
\end{center}
 
 \subsection{A brief summary to the paper} Section~\ref{sec:2} reviews some notational conventions related to the Bruhat-Tits building for $PGL(d,F)$ and explains the weighted adjacency operators and their spectrum. We provide an explicit description of the $PGL(4,\mathbb{F}_q[t])$-quotient of the building $\mathcal{B}_4$ in Section~\ref{sec:3}. Section~\ref{sec:4} summarizes the calculations of natural weight functions for adjacency operators, based on the cardinality of stabilizers. Theorem~\ref{thm:eigenftn} describes simultaneous eigenfunctions for generic triples $(\lambda_1,\lambda_2,\lambda_3)$. In Section~\ref{sec:6}, we investigate which $(\lambda_1,\lambda_2,\lambda_3)$ are indeed automorphic spectrum. Appendix~\ref{sec:appendix} contains the proof of Theorem~\ref{thm:eigenftn} and the discussion of non-generic cases. In Appendix \ref{ap:b}, we prove Lemma \ref{lem:6.77}.

\subsubsection*{Acknowledgement} This work is supported by SSTF (Samsung Science and Technology Foundation) Project Number SSTF-BA2101-01 and NRF grant (No. RS-2023-00237811).

\section{Preliminaries: Buildings and colored adjacency operators}\label{sec:2}
Let $F$ be a non-Archmedean local field with a discrete valuation $\nu$ and $\mathcal{O}_F$ be the valuation ring of $F$ with the uniformizer $\pi$. Let $G$ be the projective general linear group $PGL(d,F)$ over $F$ given by
$$PGL(d,F)=GL(d,F)/\{\lambda I :\lambda\in F^\times\}$$
and denote by $W$ the image of the map $g\rightarrow g\{\lambda I:\lambda \in F^\times\}$ from $GL(d,\mathcal{O}_F)$ to $PGL(d,F)$. Two $\mathcal{O}_F$-modules $L$ and $L'$ of rank $d$ are said to be equivalent if $L=sL'$ for some $s\in F^\times$. Each coset $gW$ in $G/W$ can be interpreted as an equivalence class of $\mathcal{O}_F$-lattices, i.e. $\mathcal{O}_F$-modules of rank $d$.

The Bruhat-Tits building $\mathcal{B}_d$ associated to $PGL(d,F)$ is the $(d-1)$-dimensional contractible simplicial complex constructed as follows. The set of vertices $\mathcal{B}_d^{(0)}$ is the set of equivalence classes $[L]$ of $\mathcal{O}_F$-lattices. More generally, the set $\mathcal{B}_d^{(k)}$ of $k$-simplices is the set of $k+1$-vertices $[L_0]$, $[L_1],\cdots,[L_k]$ satisfying 
\begin{equation}\label{eq:2.1}
\pi L_1'\subset L_k'\subset L_{k-1}'\subset \cdots \subset L_2'\subset L_1'
\end{equation}
for some $L_i'\in[L_i].$ A maximal set of mutually adjacent vertices has cardinality $d$ and they form a $(d-1)$-dimensional simplicial complex, called \emph{chamber}. The chambers sharing a fixed vertex correspond bijectively to the complete flags in $(\mathcal{O}/\pi\mathcal{O})^d$.
 
The group $PGL(d,F)$ acts on $\mathcal{B}_d$ by left translation, preserving $k$-dimensional facets for all $0\le k\le d-1$. 
Since the group $W$ is the stabilizer of the vertex $[\mathcal{O}_F^d]$, the set $\mathcal{B}_d^{(0)}$ of vertices identified with the quotient space $G/W$ and we may denote by $[L_g]$ the vertex of $\mathcal{B}_d$ corresponding to the coset $gW$. Each vertex $[L_g]$ has a \emph{color} given by $\textrm{ord}_\pi\det(g)$ mod $d$. Equivalently, the color $\tau$ from $\mathcal{B}_d^{(0)}$ to $\mathbb{Z}/d\mathbb{Z}$ is defined by 
  \begin{equation}\label{eq:2.2}\tau([L]):=\log_q[\mathcal{O}_F^d:\pi^i L],\end{equation}
 for sufficiently large positive integer $i$ satisfying $\pi^i L\subset \mathcal{O}_F^d.$ Since $[\pi^i L:\pi^{i+1} L]=d$, the color of a vertex $[L]$ does not depend on the choice of a lattice in $[L]$. Adjacent vertices have different colors, and in particular, the vertices in a chamber exhaust all possible colors.

  Let $L^2(\mathcal{B}_d)$ be the space of functions $f$ on $\mathcal{B}_d$ satisfying
 $$\|f\|_2:=\biggl(\sum_{x\in \mathcal{B}_d^{(0)}}|f(x)|^2\biggr)^{1/2}<\infty.$$ 
  The \textit{colored adjacency operator} $A_i$ on $L^2(\mathcal{B}_d)$ is defined for any $L^2$-function $f$ by
  \begin{equation}\label{eq:2.3}
  A_if(x):=\sum_{\substack{y\sim x\\\tau(y)=\tau(x)+i}}f(y),
  \end{equation}
   where $y\sim x$ implies that $y$ is a neighborhood $x$. These operators are bounded, commutative, and the operators $A_i$ and $A_{d-i}$ are the adjoint of each other. Consequently, each $A_i$ is a normal operator.

Generalizing the idea of \cite{M}, the colored adjacency operators with weights are introduced in \cite{S}. Let $\Gamma$ be a  discrete subgroup of $PGL(d,F)$. Although the color function defined on $\mathcal{B}_d^{(0)}$ may not be preserved by $\Gamma$, the colors defined on the set $\mathcal{B}_d^{(1)}$ of edges (1-skeletons of $\mathcal{B}_d$) by 
$$\tau(x,y)=\tau(x)-\tau(y)\quad(\textrm{mod }d)$$ are preserved by $\Gamma$. Thus, the operators $A_i$ defined by \eqref{eq:2.3} induce $A_{X,i}$ on the space of functions on $\Gamma\backslash\mathcal{B}_d$. If $\Gamma$ acts on $\mathcal{B}_d$ with torsions, the induced operators on the quotient should come with weights envolving the cardinality of the stabilizer of the vertices and edges.

Given a simplicial complex $X$ (not necessarily finite), let $V=VX$ and $E=EX$ be the set of vertices and edges of $X$, respectively. Denote by $(u,v)$ the edge connecting $u$ and $v$. 
 \begin{defn} A function $w:V\cup E\rightarrow (0,1]$ is called a \textit{weight function} if $\frac{w(e)}{w(v)}$ at least 1 for any $v\in V$ and $e\in E$ with $v\in e$. The function $\theta (u,v):=\frac{w(e)}{w(u)}$ is called the \textit{entering degree} of $e=(u,v)$ to $u$. The \textit{in-degree} of a vertex $u$ is defined by $\text{in-degree}(u):=\sum_{(u,v)\in E}\theta(u,v)$. A simplicial complex $X$ with a weight function $w$ is $k$-regular if $\text{in-degree}(u)=k$ for any $u\in V$.
 \end{defn}
 Fix a weight function $w$. A measure $\mu$ on $V$ is defined by $\mu(S)=\underset{v\in S}\sum w(v)$ for any $S\subset V$. The measure $\mu$ allows to define $L^2$-norm of a function $f$ by
 $$\|f\|_{2,w}:=\biggl(\sum_{u\in V}|f(u)|^2w(u)\biggr)^{1/2}.$$
 Let $L^2_w(X)$ be the space of functions $f$ on $V$ satisfying $\|f\|_{2,w}<\infty.$ 
 
 \begin{defn}
 The \textit{weighted adjacency operators} $A_{w,i}$ on $L^2_w(\Gamma\backslash\mathcal{B}_d)$ is defined for any $f\in L^2_w(\Gamma\backslash\mathcal{B}_d)$ by  
 $$A_{w,i}f(u):=\sum_{\substack{(u,v)\in E\\\tau(v)=\tau(u)+i}} \frac{w(u,v)}{w(u)}f(v).$$
 \end{defn}
 With respect to the weight $w$ given by $$w(v)=\frac{1}{|\Gamma_v|}\qquad\textrm{ and }\qquad w(e)=\frac{1}{|\Gamma_e|},$$ the colored adjacency operators $A_i$ on $\mathcal{B}_d$ induce the weighted adjacency operators $A_{w,i}$ on $L^2_w(\Gamma\backslash \mathcal{B}_d)$.

The \textit{simultaneous spectrum} $\mathcal{S}^d$ of $A_1,\ldots,A_{d-1}$ is the subset of $d-1$-tuples $(\lambda_1,\cdots,\lambda_{d-1})$ in $\mathbb{C}^{d-1}$ for which there exists a sequence $\{f_n\}$ of $L^2$-functions with $\|f_n\|_2=1$ such that for any $i=1,\cdots,d-1$,
   $$\lim_{n\rightarrow \infty} \|A_if_n-\lambda_if_n\|_2=0.$$
  According to MacDonald \cite{Ma} (or see also Theorem~2.11 of \cite{LSV1}), the spectrum of $A_i$ for $1\le i\le d-1$ on $\mathcal{B}_d$ is $q^{i(d-i)}\Omega_{d,i}$ where
 $$\Omega_{d,i}=\{\sigma_i(z_1,\ldots,z_d)\colon |z_1|=\cdots=|z_d|=1\textrm{ and }z_1z_2\cdots z_d=1\}$$ and 
 $\sigma_i(z_1,\ldots,z_d)=\sum_{1\le k_1<\cdots<k_i\le d} z_{k_1}z_{k_2}\cdots z_{k_i}$ is the $i$-th elementary symmentric polynomial.


\section{$PGL(4,\mathbb{F}_q[t])$ quotient of the building $\mathcal{B}_4$}\label{sec:3}

Let $\mathbb{F}_q$ be a finite field of order $q$. Let $\mathbb{F}_q[t]$ and $\mathbb{F}_q(t)$ be the ring of polynomials and the field of rational functions over $\mathbb{F}_q$, respectively. Consider the absolute value $\|\cdot\|$ of $\mathbb{F}_q(t)$ given for any $f,g\in \mathbb{F}_q[t]$ by $$\left\|\frac{f}{g}\right\|:=q^{\deg(f)-\deg(g)}.$$
The completion $F$ of $\mathbb{F}_q(t)$ with respect to $\|\cdot\|$ is the field of formal Laurent series in $t^{-1}$
$$\mathbb{F}_q(\!(t^{-1})\!):=\left\{\sum_{n=-N}^\infty a_nt^{-n}:N\in \mathbb{Z},a_n\in \mathbb{F}_q\right\}$$
and the valuation ring $\mathcal{O}_F$ of $\mathbb{F}_q(\!(t^{-1})\!)$ is the subring of power series 
$$\mathbb{F}_q[\![t^{-1}]\!]:=\left\{\sum_{n=0}^{\infty} a_nt^{-n}:n\in \mathbb{Z},a_n\in\mathbb{F}_q\right\}.$$
Let $G:=PGL(4,\mathbb{F}_q(\!(t^{-1})\!))$, $W:=PGL(4,\mathbb{F}_q[\![t^{-1}]\!])$ and $\Gamma:=PGL(4,\mathbb{F}_q[t])
$. In this section, we give an explicit description of the standard fundamental domain for $\Gamma\backslash G/W$ and the weighted adjacency operators $A_{w,i}$ on $L^2_w(\Gamma\backslash \mathcal{B}_4)$ induced from the operators $A_i$ on $L^2(\mathcal{B}_4)$.

Let $x_{\ell,m,n}$ be the diagonal matrix $\textrm{diag}(t^\ell,t^m,t^n,1)$.
 Using Lemma~3.1 and the proof of Lemma~3.2 in \cite{HK}, we have the following lemma.
 \begin{lem}\label{lem:3.1} For any $g\in G$, there exists a unique $3$-tuple $(\ell,m,n)$ of integers with $\ell\ge m\ge n\ge 0$ such that 
 \begin{equation}\nonumber
 g\in \Gamma x_{\ell,m,n} W.
 \end{equation}
 \end{lem}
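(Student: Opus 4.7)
Lemma~\ref{lem:3.1} asserts that $\{x_{\ell,m,n}:\ell\ge m\ge n\ge 0\}$ is a complete set of representatives for the double-coset space $\Gamma\backslash G/W$, i.e., a combinatorial fundamental domain for the $\Gamma$-action on the vertex set $G/W = \mathcal{B}_4^{(0)}$. The approach directly extends \cite[Lemma~3.1 and the proof of Lemma~3.2]{HK}, where the analogous statement is established for $PGL_3$. I would treat existence and uniqueness separately.

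For \textbf{existence}, the plan is a three-step reduction of an arbitrary $g\in G$. First, invoke the Cartan (elementary-divisor) decomposition for $PGL_4(F)$, writing $g = k_1\cdot\mathrm{diag}(t^{a_1},t^{a_2},t^{a_3},t^{a_4})\cdot k_2$ with $k_1,k_2\in W$ and integers $a_1\ge a_2\ge a_3\ge a_4$; dividing by the central scalar $t^{a_4}I$ sets the last exponent to $0$ and yields $\ell\ge m\ge n\ge 0$, so $gW = k_1\,x_{\ell,m,n}\,W$. Second, to replace $k_1$ by an element of $\Gamma$, pass to the lattice viewpoint and exploit the direct-sum decomposition $F = \mathbb{F}_q[t] \oplus t^{-1}\mathbb{F}_q[\![t^{-1}]\!]$ of the local field into polynomial and strictly-fractional parts. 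After right-multiplication by $W$, represent $g$ by an upper-triangular matrix with diagonal entries $t^{a_i}$; then elementary unipotent row operations in $\Gamma$ (with $\mathbb{F}_q[t]$-coefficients, i.e., polynomial Euclidean reductions) clear the polynomial parts of each off-diagonal entry, and a subsequent right multiplication by a unipotent element of $W$ absorbs the residual power-series parts. Third, a permutation matrix in $\Gamma$ sorts the diagonal exponents in decreasing order, producing the desired diagonal form $x_{\ell,m,n}$.

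For \textbf{uniqueness}, suppose $x_{\ell,m,n} = \gamma\, x_{\ell',m',n'}\, w$ with $\gamma\in\Gamma$ and $w\in W$. Setting $a = (\ell,m,n,0)$ and $a' = (\ell',m',n',0)$, the identity $\gamma = x_{\ell,m,n}\,w^{-1}\,x_{\ell',m',n'}^{-1}$ has $(i,j)$-entry $t^{a_i-a'_j}\cdot (w^{-1})_{ij}$. Since $\gamma$ is a matrix over $\mathbb{F}_q[t]$ while $w^{-1}$ has entries in $\mathbb{F}_q[\![t^{-1}]\!]$, each nonzero term forces $a_i\ge a'_j$, and the determinant condition $\det\gamma\in\mathbb{F}_q^\times$ forces $\sum a_i = \sum a'_j$, hence $\ell+m+n = \ell'+m'+n'$. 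Reversing the roles of primed and unprimed gives the opposite inequalities, and a short block-structure analysis (handling possible ties in the exponents and the block-upper-triangular structure forced on $w^{-1}$) then yields the component-wise equalities $\ell=\ell'$, $m=m'$, $n=n'$.

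The \textbf{main obstacle} is Step 2 of existence. Since $\mathbb{F}_q[t]\cap \mathbb{F}_q[\![t^{-1}]\!] = \mathbb{F}_q$, the left-$\Gamma$ polynomial row operations and the right-$W$ power-series column operations attack complementary parts of each entry but interact through the ordering. Care is needed to sweep the off-diagonal entries in the right order so that clearing one entry does not reintroduce polynomial parts in a previously cleared one. For $d=4$, there are six off-diagonal positions to process versus three in the $PGL_3$ case of \cite{HK}, so the bookkeeping is heavier but the strategy is identical.
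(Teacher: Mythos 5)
Your existence argument, once Step~1 is discarded, follows the same strategy as the paper: put $g$ in upper-triangular form modulo $W$ on the right, sort the diagonal exponents by left-multiplying with a permutation in $\Gamma$, and clear the off-diagonal entries using the split $F=\mathbb{F}_q[t]\oplus t^{-1}\mathbb{F}_q[\![t^{-1}]\!]$, with polynomial row operations in $\Gamma$ and power-series column operations in $W$; both proofs defer the (nontrivial) sweep order to \cite{HK}. But Step~1 rests on a misconception: the Cartan exponents of $g$ relative to $W$ do \emph{not} agree with the $\Gamma$-double-coset parameters. Already in $PGL_2$, the unipotent $\gamma=\bigl(\begin{smallmatrix}1&t\\0&1\end{smallmatrix}\bigr)\in\Gamma$ lies in $Wx_2W$ (its invariant-factor type relative to $W$ is $\mathrm{diag}(t,t^{-1})\sim x_2$), yet trivially $\gamma\in\Gamma x_0W$ --- the same vertex has Cartan parameter~$2$ but $\Gamma$-double-coset parameter~$0$. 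So one cannot ``replace $k_1\in W$ by $\gamma\in\Gamma$ while keeping $x_{\ell,m,n}$ fixed''; the exponents that ultimately emerge from your Iwasawa reduction in Step~2 will in general differ from those you extracted in Step~1, and the plan does not acknowledge this.

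Your uniqueness sketch also has a gap. The identity $\gamma = x_{\ell,m,n}w^{-1}x_{\ell',m',n'}^{-1}$ holds only in $PGL_4$; any lift to $GL_4$ introduces a central scalar which, after absorbing units into $w$, may be taken to be $t^s$ for some $s\in\mathbb{Z}$. The entry-wise identity then reads $\tilde\gamma_{ij}=t^{\,s+a_i-a'_j}(\tilde w^{-1})_{ij}$, so the leading-degree bound gives $a_i-a'_j\geq -s$ rather than $a_i\geq a'_j$, and the determinant computation gives $\ell+m+n-(\ell'+m'+n')=-4s$ --- this is only congruence modulo~$4$, i.e.\ the statement that $\Gamma$ preserves the vertex color, not the exact equality $\sum a_i=\sum a'_j$ that you assert. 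Forcing $s=0$ and then completing the block comparison requires genuine further input; the paper's own proof only addresses existence and leaves the rest to \cite{HK}.
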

\begin{proof}
We note that every $g\in G$ may be written by 
$$\begin{pmatrix}
t^{\ell_0} & a_{12} & a_{13} & a_{14} \\ 0 & t^{m_0} & a_{23} & a_{24} \\ 0 & 0 & t^{n_0} & a_{34} \\ 0 & 0 & 0 & t^{k_0}
\end{pmatrix}w$$ for some $a_{ij}\in\mathbb{F}_q(\!(t^{-1})\!)$, matrix $w\in W$ and non-negative integers $\ell_0,m_0,n_0,k_0$. By multiplying suitable $\gamma$ on the left if necessary, we may assume that $\ell_0\ge m_0\ge n_0\ge k_0$.

By similar argument of the proof of Lemma~3.2 in \cite{HK}, we may find either a pair $(\gamma_1,w_1)\in \Gamma\times W$ such that 
$$\begin{pmatrix}
t^{\ell_0} & a_{12} & a_{13} & a_{14} \\ 0 & t^{m_0} & a_{23} & a_{24} \\ 0 & 0 & t^{n_0} & a_{34} \\ 0 & 0 & 0 & t^{k_0}
\end{pmatrix}=\gamma_1
\begin{pmatrix}
t^{\ell_1} & 0 & \star & \star \\ 0 & t^{m_1} & 0 & 0 \\ 0 & 0 & t^{n_1} & 0 \\ 0 & 0 & 0 & t^{k_1}
\end{pmatrix}w_1$$ or a pair $(\gamma_2,w_2)\in \Gamma\times W$ such that
$$\begin{pmatrix}
t^{\ell_0} & a_{12} & a_{13} & a_{14} \\ 0 & t^{m_0} & a_{23} & a_{24} \\ 0 & 0 & t^{n_0} & a_{34} \\ 0 & 0 & 0 & t^{k_0}
\end{pmatrix}=\gamma_2
\begin{pmatrix}
t^{\ell_2} & 0 & 0 & \star \\ 0 & t^{m_2} & 0 & \star \\ 0 & 0 & t^{n_2} & 0 \\ 0 & 0 & 0 & t^{k_2}
\end{pmatrix}w_2$$
for some $\ell_i\ge m_i\ge n_i\ge k_i$. In both cases, by applying Lemma~3.2 in \emph{loc. cit.} to the suitable $3\times 3$ block, we may find $\gamma$ and $w$ for which each case reduces to  
$$\gamma\begin{pmatrix}
t^\ell & 0 & 0 & 0 \\ 0 & t^{m} & 0 & 0 \\ 0 & 0 & t^{n} & 0 \\ 0 & 0 & 0 & t^{k}
\end{pmatrix}w.$$
This completes the proof.
\end{proof}
 Let $v_{\ell,m,n}$ be the vertex of the quotient complex $\Gamma\backslash \mathcal{B}_d$ corresponds to $\Gamma x_{\ell,m,n} W.$ By definition of the color (see equation~\ref{eq:2.2}), we have $$\tau(v_{\ell,m,n})=\ell+m+n\quad(\text{mod} \,4)$$ for every $v_{\ell,m,n}\in \mathcal{B}_d^{(0)}$.  
  
Let us denote by $V_{\ell,m,n}$ the set of all vertices adjacent to $v_{\ell,m,n}$ and let 
$$V_{\ell,m,n}^i=\{v\in V_{\ell,m,n}\colon \tau(v)=\tau(v_{\ell,m,n})+i\}.$$ Using \eqref{eq:2.1} and Lemma~\ref{lem:3.1}, we have the following Figure~\ref{figure} and Table~\ref{t1}, which present a part of a fundamental domain for $\Gamma\backslash \mathcal{B}_4$ and describes the neighborhoods of vertices, respectively.
\begin{figure}[h]
\begin{center}
\begin{tikzpicture}[scale=0.7]
\fill (4,-{sqrt(75)}+1.5) circle (3pt); 
\fill[red] (4,-{sqrt(75)}+1) circle (3pt); 
\fill[green] (4,-{sqrt(75)}+0.5) circle (3pt); 
\fill[blue] (4,-{sqrt(75)}) circle (3pt); 
\fill (-9,+{sqrt(3)}/3-1.5) circle (3pt);\fill[red] (-10,-1.5) circle (3pt); \fill[green] (-8,-1.5) circle (3pt); \fill[blue] (-9,+{sqrt(3)}-1.5)circle (3pt); 
\fill[green] (-6,-1.5) circle (3pt); \fill[blue] (-4,-1.5) circle (3pt); \fill (-2,-1.5) circle (3pt);  \fill (-5,+{sqrt(3)}-1.5) circle (3pt); \fill[red] (-3,+{sqrt(3)}-1.5) circle (3pt);\fill[green] (-4,+{sqrt(12)}-1.5) circle (3pt); \fill[blue] (-4,+{sqrt(48)}/3-1.5) circle (3pt); \fill [red] (-5,+{sqrt(3)}/3-1.5) circle (3pt); \fill[green] (-3,+{sqrt(3)}/3-1.5) circle (3pt);
\fill [blue] (0,-1.5) circle (3pt);\fill (2,-1.5) circle (3pt); \fill[red] (4,-1.5) circle (3pt);\fill [green] (6,-1.5) circle (3pt); \fill [blue] (5,+{sqrt(3)}-1.5) circle (3pt); \fill [green] (3,+{sqrt(3)}-1.5) circle (3pt); \fill [red] (1,+{sqrt(3)}-1.5) circle (3pt); \fill [blue] (2,+{sqrt(12)}-1.5) circle (3pt); \fill [red] (3,+{sqrt(27)}-1.5) circle (3pt); \fill [green] (1,+{sqrt(3)}/3-1.5) circle (3pt); \fill [blue] (3,+{sqrt(3)}/3-1.5) circle (3pt); \fill (5,+{sqrt(3)}/3-1.5) circle (3pt); \fill [red] (4,+{sqrt(48)}/3-1.5) circle (3pt); \fill [green] (3,+{sqrt(147)}/3-1.5) circle (3pt); \fill (2,+{sqrt(48)}/3-1.5) circle (3pt); \fill [green] (1,+{sqrt(3)}/3-1.5) circle (3pt); 
\fill (4,+{sqrt(12)}-1.5) circle (3pt);
\fill (-2,-{sqrt(12)}) circle (3pt); \fill [blue] (-1,-{sqrt(27)}) circle (3pt); \fill[green] (0,-{sqrt(48)}) circle (3pt); \fill [red] (1,-{sqrt(75)}) circle (3pt); \fill (2,-{sqrt(108)}) circle (3pt); \fill [blue] (0,-{sqrt(108)}) circle (3pt); \fill [green] (-2,-{sqrt(108)}) circle (3pt); \fill [red] (-4,-{sqrt(108)}) circle (3pt); \fill (-6,-{sqrt(108)}) circle (3pt); \fill [green] (-5,-{sqrt(75)}) circle (3pt); \fill (-4,-{sqrt(48)}) circle (3pt); \fill [green] (-3,-{sqrt(27)}) circle (3pt); \fill [red] (-2,-{sqrt(192)}/3) circle (3pt); \fill [blue] (-3,-{sqrt(363)}/3) circle (3pt); \fill [red] (-4,-{sqrt(588)}/3) circle (3pt); \fill [blue] (-5,-{sqrt(867)}/3) circle (3pt); \fill  (-3,-{sqrt(867)}/3) circle (3pt); \fill [red] (-1,-{sqrt(867)}/3) circle (3pt); \fill [green] (1,-{sqrt(867)}/3) circle (3pt); \fill [blue] (0,-{sqrt(588)}/3) circle (3pt);  \fill [green] (-2,-{sqrt(588)}/3) circle (3pt);\fill [blue] (-3,-{sqrt(75)}) circle (3pt); \fill (-1,-{sqrt(75)}) circle (3pt);  \fill [red] (-2,-{sqrt(48)}) circle (3pt); \fill (-1,-{sqrt(363)}/3) circle (3pt);
\draw [thick](-10,-1.5)--(-8,-1.5)--(-9,+{sqrt(3)}-1.5)--(-10,-1.5)--(-9,+{sqrt(3)}/3-1.5)--(-8,-1.5); \draw[thick] (-9,+{sqrt(3)}/3-1.5)--(-9,+{sqrt(3)}-1.5); 
\draw[thick] (-6,-1.5)--(-4,+{sqrt(12)}-1.5);\draw[thick] (-4,+{sqrt(12)}-1.5)--(-2,-1.5)--(-6,-1.5); \draw [dashed] (-5,+{sqrt(3)}-1.5)--(-4,-1.5)--(-3,+{sqrt(3)}-1.5)--(-5,+{sqrt(3)}-1.5); \draw [thick] (-5,+{sqrt(3)}/3-1.5)--(-3,+{sqrt(3)}/3-1.5)--(-4,+{sqrt(48)}/3-1.5)--(-5,+{sqrt(3)}/3-1.5); \draw [thick] (-6,-1.5)--(-5,+{sqrt(3)}/3-1.5)--(-5,+{sqrt(3)}-1.5)--(-4,+{sqrt(48)}/3-1.5)--(-4,+{sqrt(12)}-1.5)--(-4,+{sqrt(48)}/3-1.5)--(-3,+{sqrt(3)}-1.5)--(-3,+{sqrt(3)}/3-1.5)--(-2,-1.5)--(-3,+{sqrt(3)}/3-1.5)--(-4,-1.5)--(-5,+{sqrt(3)}/3-1.5); \draw [dashed] (-5,+{sqrt(3)}-1.5)--(-3,+{sqrt(3)}/3-1.5);
\draw[thick] (0,-1.5)--(6,-1.5)--(3,+{sqrt(27)}-1.5);\draw[thick](3,+{sqrt(27)}-1.5)--(0,-1.5); \draw[dashed] (5,+{sqrt(3)}-1.5)--(4,-1.5)--(3,+{sqrt(3)}-1.5)--(4,-1.5)--(1,+{sqrt(3)}-1.5)--(5,+{sqrt(3)}-1.5);\draw[dashed] (2,+{sqrt(12)}-1.5)--(3,+{sqrt(3)}-1.5)--(4,+{sqrt(12)}-1.5)--(2,+{sqrt(12)}-1.5);\draw[thick] (1,+{sqrt(3)}/3-1.5)--(5,+{sqrt(3)}/3-1.5)--(3,+{sqrt(147)}/3-1.5)--(1,+{sqrt(3)}/3-1.5);\draw[thick] (2,+{sqrt(48)}/3-1.5)--(3,+{sqrt(3)}/3-1.5)--(4,+{sqrt(48)}/3-1.5)--(2,+{sqrt(48)}/3-1.5); \draw[thick] (0,-1.5)--(1,+{sqrt(3)}/3-1.5)--(1,+{sqrt(3)}-1.5)--(2,+{sqrt(48)}/3-1.5)--(2,+{sqrt(12)}-1.5)--(3,+{sqrt(147)}/3-1.5)--(3,+{sqrt(27)}-1.5)--(3,+{sqrt(147)}/3-1.5)--(4,+{sqrt(12)}-1.5)--(4,+{sqrt(48)}/3-1.5)--(5,+{sqrt(3)}-1.5)--(5,+{sqrt(3)}/3-1.5)--(6,-1.5)--(5,+{sqrt(3)}/3-1.5)--(4,-1.5)--(3,+{sqrt(3)}/3-1.5)--(2,-1.5)--(1,+{sqrt(3)}/3-1.5);\draw [dashed] (1,+{sqrt(3)}-1.5)--(3,+{sqrt(3)}/3-1.5)--(3,+{sqrt(3)}-1.5)--(2,+{sqrt(48)}/3-1.5);\draw[dashed] (2,+{sqrt(12)}-1.5)--(4,+{sqrt(48)}/3-1.5)--(3,+{sqrt(3)}-1.5)--(5,+{sqrt(3)}/3-1.5);\draw [dashed] (2,-1.5)--(3,+{sqrt(3)}-1.5);\draw [dashed] (2,-1.5)--(1,+{sqrt(3)}-1.5);
\draw [thick] (-2,-{sqrt(12)})--(2,-{sqrt(108)})--(-6,-{sqrt(108)}); \draw [thick] (-6,-{sqrt(108)})--(-2,-{sqrt(12)})--(-2,-{sqrt(192)}/3)--(-3,-{sqrt(27)})--(-3,-{sqrt(363)}/3)--(-4,-{sqrt(48)})--(-4,-{sqrt(588)}/3)--(-5,-{sqrt(75)})--(-5,-{sqrt(867)}/3); \draw [dashed] (-5,-{sqrt(75)})--(-4,-{sqrt(108)})--(-3,-{sqrt(75)})--(-2,-{sqrt(108)})--(-1,-{sqrt(75)})--(0,-{sqrt(108)})--(1,-{sqrt(75)})--(-5,-{sqrt(75)})--(-4,-{sqrt(48)})--(-3,-{sqrt(75)})--(-2,-{sqrt(48)})--(-1,-{sqrt(75)})--(0,-{sqrt(48)})--(-4,-{sqrt(48)})--(-3,-{sqrt(27)})--(-2,-{sqrt(48)})--(-1,-{sqrt(27)})--(-3,-{sqrt(27)}); 
\draw[thick] (-2,-{sqrt(192)}/3)--(1,-{sqrt(867)}/3)--(-5,-{sqrt(867)}/3); \draw [thick] (-5,-{sqrt(867)}/3)--(-4,-{sqrt(588)}/3)--(-3,-{sqrt(867)}/3)--(-2,-{sqrt(588)}/3)--(-1,-{sqrt(867)}/3)--(0,-{sqrt(588)}/3)--(-4,-{sqrt(588)}/3)--(-3,-{sqrt(363)}/3)--(-2,-{sqrt(588)}/3)--(-1,-{sqrt(363)}/3)--(-3,-{sqrt(363)}/3)--(-2,-{sqrt(192)}/3);  \draw [thick] (-6,-{sqrt(108)})--(-5,-{sqrt(867)}/3)--(-4,-{sqrt(108)})--(-3,-{sqrt(867)}/3)--(-2,-{sqrt(108)})--(-1,-{sqrt(867)}/3)--(0,-{sqrt(108)})--(1,-{sqrt(867)}/3)--(2,-{sqrt(108)})--(1,-{sqrt(867)}/3)--(1,-{sqrt(75)})--(0,-{sqrt(588)}/3)--(0,-{sqrt(48)})--(-1,-{sqrt(363)}/3)--(-1,-{sqrt(27)})--(-2,-{sqrt(192)}/3)--(-2,-{sqrt(12)});
\draw [dashed] (-5,-{sqrt(75)})--(-3,-{sqrt(867)}/3)--(-3,-{sqrt(75)})--(-4,-{sqrt(588)}/3)--(-4,-{sqrt(48)})--(-2,-{sqrt(588)}/3)--(-2,-{sqrt(48)})--(-3,-{sqrt(363)}/3)--(-3,-{sqrt(27)})--(-1,-{sqrt(363)}/3)--(-2,-{sqrt(48)})--(0,-{sqrt(588)}/3)--(-1,-{sqrt(75)})--(-2,-{sqrt(588)}/3)--(-3,-{sqrt(75)})--(-1,-{sqrt(867)}/3)--(-1,-{sqrt(75)})--(1,-{sqrt(867)/3});
\node at (-8.2,+{sqrt(3)}/3-1.5) {$v_{0,0,0}$};
\node at (-10,-1.8) {$v_{1,0,0}$};\node at (-8,-1.8) {$v_{1,1,0}$}; \node at (-9,+{sqrt(3)}-1.2) {$v_{1,1,1}$}; 
\node at(-3.2,+{sqrt(12)}-1.5) {$v_{2,2,2}$}; \node at (-5.7,+{sqrt(3)}-1.5) {$v_{2,1,1}$}; \node at (-6,-1.8) {$v_{2,0,0}$}; \node at (-4,-1.8) {$v_{2,1,0}$}; \node at (-2,-1.8) {$v_{2,2,0}$}; \node at (-2.2,+{sqrt(3)}-1.5) {$v_{2,2,1}$};  \node at (-5.7,+{sqrt(3)}/3-1.5) {$v_{1,0,0}$}; \node at (-2.2,+{sqrt(3)}/3-1.5) {$v_{1,1,0}$}; \node at (-4.7,+{sqrt(48)}/3-1.5) {$v_{1,1,1}$}; 
\node at (0,-1.8) {$v_{3,0,0}$}; \node at (2,-1.8) {$v_{3,1,0}$};\node at (4,-1.8) {$v_{3,2,0}$};\node at (6,-1.8) {$v_{3,3,0}$};\node at (5.7,+{sqrt(3)}-1.5) {$v_{3,3,1}$}; \node at (4.7,+{sqrt(12)}-1.5) {$v_{3,3,2}$}; \node at (3.7,+{sqrt(27)}-1.5) {$v_{3,3,3}$}; \node at (0.3, +{sqrt(3)}-1.5) {$v_{3,1,1}$}; \node at (1.3,+{sqrt(12)}-1.5) {$v_{3,2,2}$};\node at(0.3,+{sqrt(3)}/3-1.5) {$v_{2,0,0}$}; \node at (1.2,+{sqrt(48)}/3-1.5) {$v_{2,1,1}$}; \node at (2.2,+{sqrt(147)}/3-1.5) {$v_{2,2,2}$}; \node at  (3,-1.2) {$v_{2,1,0}$}; \node at (5.7,+{sqrt(3)}/3-1.5) {$v_{2,2,0}$}; \node at (4.7,+{sqrt(48)}/3-1.5) {$v_{2,2,1}$};\node at (3.7,0.5) {$v_{3,2,1}$}; 
\node at (-1.3,-{sqrt(12)}) {$v_{4,4,4}$}; \node at (-0.3,-{sqrt(27)}) {$v_{4,4,3}$}; \node at (0.7,-{sqrt(48)}) {$v_{4,4,2}$}; \node at (1.7,-{sqrt(75)}) {$v_{4,4,1}$}; \node at (2.7,-{sqrt(108)}) {$v_{4,4,0}$}; \node at (0,-{sqrt(108)}-0.2) {$v_{4,3,0}$}; \node at (-2,-{sqrt(108)}-0.2) {$v_{4,2,0}$}; \node at (-4,-{sqrt(108)}-0.2) {$v_{4,1,0}$}; \node at (-6,-{sqrt(108)}-0.2) {$v_{4,0,0}$}; \node at (-5.7,-{sqrt(75)}) {$v_{4,1,1}$}; \node at (-4.7,-{sqrt(48)}) {$v_{4,2,2}$}; \node at (-3.7,-{sqrt(27)}) {$v_{4,3,3}$}; \node at (-2.7,-{sqrt(192)}/3) {$v_{3,3,3}$}; \node at (-3.7,-{sqrt(363)}/3) {$v_{3,2,2}$}; \node at (-4.7,-{sqrt(588)}/3) {$v_{3,1,1}$}; \node  at (-5.7,-{sqrt(867)}/3) {$v_{3,0,0}$}; \node at (-2.2,-{sqrt(867)}/3+0.2) {$v_{3,1,0}$}; \node at (-0.2,-{sqrt(867)}/3+0.2) {$v_{3,2,0}$}; \node at (1.7,-{sqrt(867)}/3) {$v_{3,3,0}$}; \node at (0.7,-{sqrt(588)}/3) {$v_{3,3,1}$}; \node at (-0.3,-{sqrt(363)}/3) {$v_{3,3,2}$}; \node at (-1.2,-{sqrt(588)}/3+0.2) {$v_{3,2,1}$};\node at (-3,-{sqrt(75)}+0.3) {$v_{4,2,1}$};\node at (-1,-{sqrt(75)}+0.3) {$v_{4,3,1}$}; \node at (-2,-{sqrt(48)}+0.3) {$v_{4,3,2}$};

\node at (4,-{sqrt(75)}+2) {\tiny{Color of vertices}};\node at (4.4,-{sqrt(75)}+1.5) {\tiny$:0$}; \node at (4.4,-{sqrt(75)}+1) {\tiny $:1$}; \node at (4.4,-{sqrt(75)}+0.5) {\tiny $:2$};\node at (4.4,-{sqrt(75)}) {\tiny $:3$}; 

\end{tikzpicture}
\end{center}
\caption{A fundamental domain for $\Gamma\backslash\mathcal{B}_4$}\label{figure}
\end{figure}
\begin{table}[h]
\centering
\begin{tabular}{|c|c|c|c|}
\noalign{\smallskip}\noalign{\smallskip}\hline
& \multicolumn{1}{|c|}{$i=1$}& \multicolumn{1}{|c|}{$i=2$} & \multicolumn{1}{|c|}{$i=3$} \\
\hline
\multicolumn{1}{|c|}{$V_{0,0,0}^i$} &$v_{1,0,0}$&$v_{1,1,0}$ &$v_{1,1,1}$ \\
\hline
\multicolumn{1}{|c|}{$V_{\ell,0,0}^i$}& $v_{\ell,1,0},v_{\ell+1,0,0}$&$v_{\ell,1,1},v_{\ell+1,1,0}$ & $v_{\ell-1,0,0},v_{\ell+1,1,1}$ \\
\hline
\multicolumn{1}{|c|}{$V_{\ell,\ell,0}^i$}  & $v_{\ell,\ell,1},v_{\ell+1,\ell,0}$ & $v_{\ell-1,\ell-1,0},v_{\ell+1,\ell+1,0},v_{\ell+1,\ell,1}$ & $v_{\ell,\ell-1,0},v_{\ell+1,\ell+1,1}$ \\
\hline
\multicolumn{1}{|c|}{$V_{\ell,\ell,\ell}^i$}  & $v_{\ell-1,\ell-1,\ell-1},v_{\ell+1,\ell,\ell}$ & $v_{\ell,\ell-1,\ell-1},v_{\ell+1,\ell+1,\ell}$ & $v_{\ell,\ell,\ell-1},v_{\ell+1,\ell+1,\ell+1}$ \\ \hline

\multicolumn{1}{|c|}{\multirow{2}{*}{$V_{\ell,m,0}^i$}}  & $v_{\ell,m,1},v_{\ell,m+1,0},$ & $v_{\ell-1,m-1,0},v_{\ell,m+1,1},$ & $v_{\ell-1,m,0},v_{\ell,m-1,0},$ \\ 
 & $v_{\ell+1,m,0}$ & $v_{\ell+1,m,1},v_{\ell+1,m+1,0}$ & $v_{\ell+1,m+1,1}$ \\ \hline
 \multicolumn{1}{|c|}{\multirow{2}{*}{$V_{\ell,m,m}^i$}}  & $v_{\ell-1,m-1,m-1},v_{\ell,m+1,m},$ & $v_{\ell-1,m,m-1},v_{\ell,m-1,m-1},$ & $v_{\ell-1,m,m},v_{\ell,m,m-1},$ \\ 
 
 &$v_{\ell+1,m,m}$&$v_{\ell,m+1,m+1},v_{\ell+1m+1,m}$&$v_{\ell+1,m+1,m+1}$\\ \hline

 \multicolumn{1}{|c|}{\multirow{2}{*}{$V_{\ell,\ell,m}^i$}}  & $v_{\ell-1,\ell-1,m-1},v_{\ell,\ell,m+1},$ & $v_{\ell-1,\ell-1,m},v_{\ell,\ell-1,m-1},$ & $v_{\ell,\ell-1,m},v_{\ell,\ell,m-1},$ \\ 
 
 &$v_{\ell+1,\ell,m}$&$v_{\ell+1,\ell,m+1},v_{\ell+1,\ell+1,m}$&$v_{\ell+1,\ell+1,m+1}$\\ \hline
  \multicolumn{1}{|c|}{\multirow{3}{*}{$V_{\ell,m,n}^i$}}  & $v_{\ell-1,m-1,n-1},v_{\ell,m,n+1},$ & $v_{\ell-1,m-1,n},v_{\ell-1,m,n-1},$ & $v_{\ell-1,m,n},v_{\ell,m-1,n},$ \\ 
  &$v_{\ell,m+1,n},v_{\ell+1,m,n}$&$v_{\ell,m-1,n-1},v_{\ell,m+1,n+1}$&$v_{\ell,m,n-1},v_{\ell+1,m+1,n+1}$\\
  &&$v_{\ell+1,m,n+1},v_{\ell+1,m+1,n}$&\\ \hline
\end{tabular}
\vspace{1.5em}
\caption{The neighborhoods of the vertices}\label{t1}
\end{table}


\section{Calculation of natural weights}\label{sec:4}

 Let $\Gamma_{\ell,m,n}$ be the stabilizer of a vertex $x_{\ell,m,n}W$ in $\mathcal{B}_4$. We consider the weight function $w$ by 
 $$w(v_{\ell,m,n}):=\frac{(q-1)^3q^6}{|\Gamma_{\ell,m,n}|}\text{ and } w(e):=\frac{(q-1)^3q^6}{|\Gamma_{\ell,m,n}\cap \Gamma_{\ell',m',n'}|},$$
 where $e=(v_{\ell,m,n},v_{\ell',m',n'})$ and $|S|$ is the cardinality of a set $S$. Note that the numerator constant does not affect the space $L^2_w(\Gamma\backslash\mathcal{B}_4)$ or the operators $A_{w,i}$. In this section, we compute the values of the weight function $w$.
 \begin{lem}\label{lem:4.2}The cardinality of $\Gamma_{0,0,0}$ is  $(q-1)^3(q^3+q^2+q+1)(q^2+q+1)(q+1)q^6.$
For any $m>0$,  
 \begin{equation}\label{eq:4.1}
 \begin{split}
&|\Gamma_{\ell,0,0}|=(q-1)^3(q^2+q+1)(q+1)q^{3\ell+6}\\
&|\Gamma_{\ell,\ell,0}|=(q-1)^3(q+1)^2q^{4\ell+6}\\
 &|\Gamma_{\ell,\ell,\ell}|=(q-1)^3(q^2+q+1)(q+1)q^{3\ell+6}.
 \end{split}
 \end{equation}
 \end{lem}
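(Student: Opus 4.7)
The plan is to identify the stabilizer of $x_{\ell,m,n}W$ in $\Gamma$ with $\Gamma \cap x_{\ell,m,n} W x_{\ell,m,n}^{-1}$ and translate membership in $x_{\ell,m,n} W x_{\ell,m,n}^{-1}$ into a concrete degree condition on the entries of a polynomial-matrix representative. First I would lift $\gamma \in \Gamma$ to $\tilde\gamma \in GL(4, \mathbb{F}_q[t])$, noting that $\det \tilde\gamma \in \mathbb{F}_q^\times$. The condition $x_{\ell,m,n}^{-1} \tilde\gamma x_{\ell,m,n} \in F^\times \cdot GL(4, \mathcal{O}_F)$, after taking determinants, forces the $F^\times$-scalar to actually lie in $\mathcal{O}_F^\times$ (since a fourth power in $F^\times$ times a unit in $\mathcal{O}_F^\times$ equals a constant in $\mathbb{F}_q^\times$), so one can absorb it and conclude $x_{\ell,m,n}^{-1}\tilde\gamma x_{\ell,m,n}\in GL(4,\mathcal{O}_F)$. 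This sidesteps the $PGL$-ambiguity and reduces the problem to counting matrices in $GL(4,\mathbb{F}_q[t])$, divided at the end by $|\mathbb{F}_q^\times|=q-1$.

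Writing $\tilde\gamma=(a_{ij})$ and setting $(d_1,d_2,d_3,d_4)=(\ell,m,n,0)$, the $(i,j)$-entry of $x_{\ell,m,n}^{-1}\tilde\gamma x_{\ell,m,n}$ is $t^{d_j-d_i}a_{ij}$. For this to lie in $\mathbb{F}_q[\![t^{-1}]\!]$, we need $a_{ij}=0$ whenever $d_i<d_j$ and $\deg a_{ij}\leq d_i-d_j$ whenever $d_i\geq d_j$. In each of the four cases of the lemma the tuple $(d_1,\ldots,d_4)$ takes at most two distinct values, so $\tilde\gamma$ is block upper-triangular: two diagonal blocks of scalar matrices that must be invertible, together with a single rectangular off-diagonal block whose entries are free polynomials in $\mathbb{F}_q[t]_{\leq\ell}$.

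The count then becomes a product of $|GL(k_1,\mathbb{F}_q)|\cdot|GL(k_2,\mathbb{F}_q)|\cdot q^{k_1k_2(\ell+1)}$, followed by division by $q-1$. Explicitly, $v_{0,0,0}$ gives $|GL(4,\mathbb{F}_q)|/(q-1)$; $v_{\ell,0,0}$ gives $(q-1)\cdot|GL(3,\mathbb{F}_q)|\cdot q^{3(\ell+1)}/(q-1)$ with a $1\times 3$ free block; $v_{\ell,\ell,0}$ gives $|GL(2,\mathbb{F}_q)|^2\cdot q^{4(\ell+1)}/(q-1)$ with a $2\times 2$ free block; and $v_{\ell,\ell,\ell}$ gives $|GL(3,\mathbb{F}_q)|\cdot(q-1)\cdot q^{3(\ell+1)}/(q-1)$ with a $3\times 1$ free block. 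Applying the standard formula $|GL(k,\mathbb{F}_q)|=q^{k(k-1)/2}\prod_{j=1}^{k}(q^j-1)$ and the identity $(q+1)(q^2+1)=q^3+q^2+q+1$ recovers exactly the four expressions in \eqref{eq:4.1}; note that the cases $v_{\ell,0,0}$ and $v_{\ell,\ell,\ell}$ yield the same count because their block structures $1+3$ and $3+1$ are symmetric.

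The only delicate step is the scalar-absorption argument at the $PGL$-to-$GL$ reduction; everything afterwards is block bookkeeping plus a factorization of $|GL(k,\mathbb{F}_q)|$. I expect this lemma to be a direct computation rather than a source of new ideas, serving as input data for the weighted operator formulas developed later in Section~\ref{sec:4}.
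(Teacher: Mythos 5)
Your proposal is correct and follows essentially the same approach as the paper: both translate the condition $x_{\ell,m,n}^{-1}\gamma x_{\ell,m,n}\in W$ into degree constraints $\deg a_{ij}\le d_i-d_j$ forcing a block-upper-triangular shape, then count the two diagonal $GL(k_i,\mathbb{F}_q)$ blocks and the $q^{k_1k_2(\ell+1)}$ free polynomial entries, dividing by $q-1$ for the center. The determinant argument you give for the $PGL$-to-$GL$ reduction is a useful detail the paper leaves implicit; also, ``scalar matrices'' in your description of the diagonal blocks should read ``matrices with entries in $\mathbb{F}_q$'' (they are elements of $GL(k_i,\mathbb{F}_q)$, not scalar multiples of the identity).
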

 \begin{proof} Since $\gamma x_{\ell,m,n}W=x_{\ell,m,n}W$ if and only if $\gamma\in \Gamma_{\ell,m,n}$, we have 
 $$\Gamma_{\ell,m,n}=\{\gamma\in \Gamma:x_{\ell,m,n}^{-1}\gamma x_{\ell,m,n}\in W\}.$$
 Obviously, $\Gamma_{0,0,0}=\Gamma\cap PGL(4,\mathcal{O}_F)=PGL(4,\mathbb{F}_q)$ and
  $$|\Gamma_{0,0,0}|=\frac{(q^4-1)(q^4-q)(q^4-q^2)(q^4-q^3)}{q-1}=(q-1)^3(q^3+q^2+q+1)(q^2+q+1)(q+1)q^6.$$
 Let $P^n(t)$ be the space of polynomials of degree less and equal to $n$. For any $m>0$,
 \begin{equation}\nonumber
 \begin{split}
& \Gamma_{\ell,0,0}=\left\{A=(a_{ij})\in \Gamma:\substack{a_{21}=a_{31}=a_{41}=0, a_{12},a_{13},a_{14}\in P^\ell(t),\\a_{11},a_{22},a_{23},a_{24},a_{32},a_{33},a_{34},a_{42},a_{43},a_{44}\in \mathbb{F}_q} \right\}/\{\lambda I:\lambda\in \mathbb{F}_q^\times\}\\
  &\Gamma_{\ell,\ell,0}=\left\{A=(a_{ij})\in \Gamma:\substack{a_{31}=a_{32}=a_{41}=a_{42}=0,a_{13},a_{14},a_{23},a_{24}\in P^\ell(t)\\a_{11},a_{12},a_{21}a_{22},a_{33},a_{34},a_{43},a_{44}\in \mathbb{F}_q} \right\}/\{\lambda I:\lambda\in \mathbb{F}_q\}\\
   &\Gamma_{\ell,\ell,\ell}=\left\{A=(a_{ij})\in \Gamma:\substack{a_{41}=a_{42}=a_{43}=0, a_{14},a_{24},a_{34}\in P^\ell(t),\\a_{11},a_{12},a_{13},a_{21},a_{22},a_{23},a_{31},a_{32},a_{33},a_{44}\in \mathbb{F}_q} \right\}/\{\lambda I:\lambda\in \mathbb{F}_q^\times\}.
 \end{split}
 \end{equation}
 Since for any $A\in \Gamma_{\ell,0,0}$, $a_{11}$ is nonzero and the vectors $(a_{22},a_{32},a_{42})$, $(a_{23},a_{33},a_{43})$ and $(a_{24},a_{34},a_{44})$ are linearly independent,
 $$|\Gamma_{\ell,0,0}|=\frac{(q-1)(q^3-1)(q^3-q)(q^3-q^2)q^{3\ell+3}}{q-1}=(q-1)^3(q^2+q+1)(q+1)q^{3\ell+6}.$$
 Similarly,  $|\Gamma_{\ell,\ell,\ell}|=(q-1)^3(q^2+q+1)(q+1)q^{3\ell+6}.$
 
 For any $A\in \Gamma_{\ell,\ell,0}$, two matrices $\begin{pmatrix} a_{11}&a_{12}\\a_{21}&a_{22}\end{pmatrix}$ and $\begin{pmatrix} a_{33}&a_{34}\\a_{43}&a_{44}\end{pmatrix}$ are invertible. Using this,
 $$|\Gamma_{\ell,\ell,0}|=\frac{(q^2-1)^2(q^2-q)^2q^{4\ell+4}}{q-1}=(q-1)^3(q+1)^2q^{4\ell+6}.$$
 \end{proof}
 \begin{lem}\label{lem:4.3}For any $\ell>m>0,$
 \begin{equation}\label{eq:4.2}
 \begin{split}
 &|\Gamma_{\ell,m,0}|=(q-1)^3(q+1)q^{3\ell+m+6}\\
 &|\Gamma_{\ell,m,m}| =(q-1)^3(q+1)q^{3\ell+6}\\
 &|\Gamma_{\ell,\ell,m}|=(q-1)^3(q+1)q^{4\ell-m+6}.
 \end{split}
 \end{equation}
 \end{lem}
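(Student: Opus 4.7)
The plan is to follow the same template established in the proof of Lemma~\ref{lem:4.2}. Namely, for $(d_1,d_2,d_3,d_4)=(\ell,m,n,0)$ we identify
$$\Gamma_{\ell,m,n}=\{\gamma=(a_{ij})\in\Gamma:x_{\ell,m,n}^{-1}\gamma x_{\ell,m,n}\in W\}$$
and observe that $(x_{\ell,m,n}^{-1}\gamma x_{\ell,m,n})_{ij}=t^{d_j-d_i}a_{ij}$ lies in $\mathbb{F}_q[\![t^{-1}]\!]$ if and only if $\deg a_{ij}\le d_i-d_j$. Thus $a_{ij}=0$ whenever $d_i<d_j$, $a_{ij}\in\mathbb{F}_q$ (degree $0$) when $d_i=d_j$, and $a_{ij}$ is a polynomial of degree at most $d_i-d_j$ (hence $q^{d_i-d_j+1}$ choices) when $d_i>d_j$. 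The determinant condition $\det\gamma\in\mathbb{F}_q^{\times}$ decouples into invertibility of the constant blocks on the diagonal, and the $PGL$ identification costs a factor of $|\mathbb{F}_q^{\times}|=q-1$.

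For $\Gamma_{\ell,m,0}$ with $\ell>m>0$, the non-vanishing entries are $a_{11},a_{22}\in\mathbb{F}_q^{\times}$, the $2\times2$ constant block $(a_{ij})_{3\le i,j\le 4}\in GL_2(\mathbb{F}_q)$, and the polynomial entries $a_{12}$ (degree $\le\ell-m$), $a_{13},a_{14}$ (degree $\le\ell$ each), and $a_{23},a_{24}$ (degree $\le m$ each). The product of the counts is
$$(q-1)^2\cdot|GL_2(\mathbb{F}_q)|\cdot q^{(\ell-m+1)+2(\ell+1)+2(m+1)}=(q-1)^4(q+1)\,q^{3\ell+m+6},$$
and dividing by $q-1$ yields the claimed value. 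For $\Gamma_{\ell,m,m}$, the constant blocks are $a_{11},a_{44}\in\mathbb{F}_q^{\times}$ and the middle $2\times2$ block in $GL_2(\mathbb{F}_q)$; the polynomial contributions come from $a_{12},a_{13}$ (each degree $\le\ell-m$), $a_{14}$ (degree $\le\ell$), and $a_{24},a_{34}$ (each degree $\le m$), giving a polynomial factor $q^{3\ell+5}$ and, together with the constants, the value $(q-1)^3(q+1)q^{3\ell+6}$. Finally, for $\Gamma_{\ell,\ell,m}$ the top-left $2\times2$ block lies in $GL_2(\mathbb{F}_q)$, $a_{33},a_{44}\in\mathbb{F}_q^{\times}$, and the polynomial entries $a_{13},a_{23}$ (degree $\le\ell-m$), $a_{14},a_{24}$ (degree $\le\ell$), $a_{34}$ (degree $\le m$) contribute $q^{4\ell-m+5}$, leading to $(q-1)^3(q+1)q^{4\ell-m+6}$.

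There is no real obstacle here: the argument is completely parallel to Lemma~\ref{lem:4.2}, and the only care required is bookkeeping—correctly identifying which diagonal blocks of constants are independent (so that the invertibility of $\gamma$ factors into invertibility of each block) and tallying the $d_i-d_j+1$ exponents of $q$ for the strictly upper-triangular polynomial entries. The assumption $\ell>m>0$ guarantees that the blocks are exactly as described and that no further degeneracies (which would merge blocks into larger $GL_k$ factors, as in the boundary cases of Lemma~\ref{lem:4.2}) occur.
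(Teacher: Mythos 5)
Your proposal is correct and follows the same approach as the paper: identify the shape of $\Gamma_{\ell,m,n}$ via the condition $\deg a_{ij}\le d_i-d_j$ (with $d_i<d_j$ forcing $a_{ij}=0$), recognize the block-triangular structure so that invertibility over $\mathbb{F}_q$ factors through the diagonal constant blocks, tally the polynomial entries, and divide by $|\mathbb{F}_q^\times|=q-1$ for the $PGL$ quotient. The paper states the resulting membership description of each $\Gamma_{\ell,m,0}$, $\Gamma_{\ell,m,m}$, $\Gamma_{\ell,\ell,m}$ and says \enquote{following the method in Lemma~\ref{lem:4.2}} without writing out the arithmetic; you carry out that arithmetic explicitly, and your block identifications and exponent counts agree with the paper's descriptions.
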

 \begin{proof}
 Similar to the proof of Lemma \ref{lem:4.2}, 
 \begin{equation}\nonumber
 \begin{split}
 & \Gamma_{\ell,m,0}=\left\{A=(a_{ij})\in \Gamma:\substack{a_{21}=a_{31}=a_{32}=a_{41}=a_{42}=0, a_{12}\in P^{\ell-m}(t),a_{13},a_{14}\in P^\ell(t),\\ a_{23},a_{24}\in P^m(t), a_{11},a_{22},a_{33},a_{34},a_{43},a_{44}\in \mathbb{F}_q} \right\}/\{\lambda I:\lambda\in \mathbb{F}_q^\times\}\\
& \Gamma_{\ell,m,m}=\left\{A=(a_{ij})\in \Gamma:\substack{a_{21}=a_{31}=a_{41}=a_{42}=a_{43}=0, a_{12},a_{13}\in P^{\ell-m}(t),a_{14}\in P^\ell(t),\\ a_{24},a_{34}\in P^{m}(t),a_{11},a_{22},a_{23},a_{32},a_{33},a_{44}\in \mathbb{F}_q} \right\}/\{\lambda I:\lambda\in \mathbb{F}_q^\times\}\\
& \Gamma_{\ell,\ell,m}=\left\{A=(a_{ij})\in \Gamma:\substack{a_{31}=a_{32}=a_{41}=a_{42}=a_{43}=0,a_{13},a_{23}\in P^{\ell-m}(t),\\a_{14},a_{24}\in P^\ell(t),a_{34}\in P^m(t),a_{11},a_{12},a_{21},a_{22},a_{33},a_{44}\in \mathbb{F}_q} \right\}/\{\lambda I:\lambda\in \mathbb{F}_q^\times\}.
 \end{split}
 \end{equation}
Following the method in the proof of Lemma \ref{lem:4.2}, we have \eqref{eq:4.2}.
 \end{proof}
 \begin{lem}\label{lem:4.4}For any $\ell>m>n>0,$
 \begin{equation}\label{eq:4.3}
 |\Gamma_{\ell,m,n}|=(q-1)^3q^{3\ell+m-n+6}
 \end{equation}
 \end{lem}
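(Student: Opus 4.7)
The plan is to mimic exactly the argument already used for Lemmas \ref{lem:4.2} and \ref{lem:4.3}, exploiting that the case $\ell>m>n>0$ of distinct positive exponents imposes the most restrictive (and thus cleanest) shape on the stabilizer. Starting from the identity
$$\Gamma_{\ell,m,n}=\{\gamma\in\Gamma:x_{\ell,m,n}^{-1}\gamma x_{\ell,m,n}\in W\},$$
I would first translate the membership condition into pointwise degree bounds on the entries of $\gamma$. Writing $\gamma=(a_{ij})$ with $a_{ij}\in\mathbb{F}_q[t]$ and $(r_1,r_2,r_3,r_4)=(\ell,m,n,0)$, the $(i,j)$-entry of $x_{\ell,m,n}^{-1}\gamma x_{\ell,m,n}$ is $t^{r_j-r_i}a_{ij}$. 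Since $\det\gamma\in\mathbb{F}_q[t]^\times=\mathbb{F}_q^\times$, the projective scalar used to land in $W$ must be a unit of $\mathcal{O}_F$, so the condition reduces to $\deg a_{ij}\le r_i-r_j$ (with $a_{ij}=0$ whenever $r_i<r_j$).

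Because $\ell>m>n>0$ are strictly decreasing, every strictly lower-triangular entry must vanish, hence $\gamma$ is upper triangular. The surviving entries are then characterized by:
\begin{itemize}
\item diagonal entries $a_{11},a_{22},a_{33},a_{44}\in\mathbb{F}_q^\times$ (nonzero for invertibility of the upper triangular matrix);
\item $a_{12}\in P^{\ell-m}(t)$, $a_{13}\in P^{\ell-n}(t)$, $a_{14}\in P^{\ell}(t)$;
\item $a_{23}\in P^{m-n}(t)$, $a_{24}\in P^{m}(t)$;
\item $a_{34}\in P^{n}(t)$.
\end{itemize}
Collecting the cardinalities, the number of such matrices inside $GL(4,\mathbb{F}_q[t])$ equals
$$(q-1)^4\cdot q^{(\ell-m+1)+(\ell-n+1)+(\ell+1)+(m-n+1)+(m+1)+(n+1)}=(q-1)^4\,q^{3\ell+m-n+6},$$
and dividing by $|\mathbb{F}_q^\times|=q-1$ to pass from $GL$ to $PGL$ yields the claimed formula $(q-1)^3q^{3\ell+m-n+6}$.

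The proof is genuinely routine given the framework; no conceptual difficulty arises beyond what was already handled in Lemma \ref{lem:4.3}. The only step where small care is needed is the justification that no further flexibility is gained by choosing a non-trivial projective scalar $\lambda\in F^\times$ when lifting the coset from $PGL$ to $GL$, which follows from $\det\gamma\in\mathbb{F}_q^\times$ forcing $\lambda\in\mathcal{O}_F^\times$. Once this is noted, the count is purely a tally of dimensions of polynomial spaces $P^k(t)$, whose exponents telescope to $3\ell+m-n+6$.
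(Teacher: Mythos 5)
Your proof is correct and follows essentially the same route as the paper's: both identify $\Gamma_{\ell,m,n}$ as the set of invertible upper-triangular matrices over $\mathbb{F}_q[t]$ with $a_{ij}\in P^{r_i-r_j}(t)$, then tally $(q-1)^4\cdot q^{3\ell+m-n+6}$ in $GL$ and divide by $q-1$. You have merely filled in details (the entry-wise degree computation and the remark about the projective scalar) that the paper leaves implicit.
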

 \begin{proof}Since 
 \begin{equation}\nonumber
  \Gamma_{\ell,m,n}=\left\{A=(a_{ij})\in \Gamma:\substack{a_{21}=a_{31}=a_{32}=a_{41}=a_{42}=a_{43}=0, a_{12}\in P^{\ell-m}(t),\\a_{13}\in P^{\ell-n}(t),a_{14}\in P^\ell(t),a_{23}\in P^{m-n},a_{24}\in P^m(t) ,a_{34}\in P^n(t),\\a_{11},a_{22},a_{33},a_{44}\in \mathbb{F}_q} \right\}/\{\lambda I:\lambda\in \mathbb{F}_q^\times\},
 \end{equation}
 the equation \eqref{eq:4.3} holds.
 \end{proof}
 \begin{lem}\label{lem:4.5}
For any $f\in L^2_w(X),$
\begin{equation}\label{eq:4.4}
\begin{split}
&A_{w,1}f(v_{0,0,0})=(q^3+q^2+q+1)f(v_{1,0,0})\\
&A_{w,2}f(v_{0,0,0})=(q^4+q^3+2q^2+q+1)f(v_{1,1,0})\\
&A_{w,3}f(v_{0,0,0})=(q^3+q^2+q+1)f(v_{1,1,1})
\end{split}
\end{equation}
 \end{lem}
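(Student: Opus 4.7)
The plan is to read off from Table~\ref{t1} that for $u = v_{0,0,0}$ the set of color-$i$ neighbors $V^i_{0,0,0}$ is a singleton for every $i \in \{1,2,3\}$, namely $v_{1,0,0}$, $v_{1,1,0}$, and $v_{1,1,1}$ respectively. Consequently the sum defining $A_{w,i}f(v_{0,0,0})$ collapses to a single term, and the entire content of the lemma becomes the determination of one coefficient in each color,
\[
c_i \;:=\; \frac{w(e_i)}{w(v_{0,0,0})} \;=\; \frac{|\Gamma_{0,0,0}|}{|\Gamma_{e_i}|},
\]
where $e_i$ is the unique quotient edge from $v_{0,0,0}$ to the color-$i$ neighbor and $\Gamma_{e_i}$ is its stabilizer.

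To determine $c_i$ I would take the orbit-stabilizer route rather than direct matrix bookkeeping. The finite group $\Gamma_{0,0,0} \cong PGL_4(\mathbb{F}_q)$ acts on the set of color-$i$ building-neighbors of $[\mathcal{O}_F^4]$; by the description $\pi L_1 \subseteq \mathcal{O}_F^4 \subseteq L_1$ of neighbors, these correspond bijectively to $(4-i)$-dimensional $\mathbb{F}_q$-subspaces of $\mathcal{O}_F^4/\pi\mathcal{O}_F^4 \cong \mathbb{F}_q^4$. The action is transitive on this Grassmannian and the stabilizer of a chosen subspace is exactly $\Gamma_{e_i}$, so $c_i = [\Gamma_{0,0,0}:\Gamma_{e_i}]$ equals the Gaussian binomial $\binom{4}{4-i}_q = \binom{4}{i}_q$.

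Finally I would evaluate $\binom{4}{1}_q = q^3+q^2+q+1$, $\binom{4}{2}_q = (q^2+1)(q^2+q+1) = q^4+q^3+2q^2+q+1$, and $\binom{4}{3}_q = q^3+q^2+q+1$, then substitute into the three single-term expressions to obtain the claimed identities. As a sanity check one can instead intersect the explicit matrix descriptions in the proofs of Lemmas~\ref{lem:4.2} and \ref{lem:4.3}: all polynomial off-diagonal entries collapse to constants in $\mathbb{F}_q$, so $|\Gamma_{e_i}|$ reduces to a product of $|GL_k(\mathbb{F}_q)|$ factors times a $q$-power, and dividing into $|\Gamma_{0,0,0}|$ from Lemma~\ref{lem:4.2} recovers the same Gaussian binomials.

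I do not foresee a serious obstacle: once the ratio $w(e)/w(v)$ is recognized as the multiplicity with which a quotient neighbor is covered by distinct building neighbors of a fixed lift, the three identities reduce to the standard Grassmannian cardinality count over $\mathbb{F}_q$.
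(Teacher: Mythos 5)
Your proof is correct, but it takes a different and more conceptual route than the paper's. The paper computes $|\Gamma_{0,0,0}\cap\Gamma_{1,0,0}|$, $|\Gamma_{0,0,0}\cap\Gamma_{1,1,0}|$, $|\Gamma_{0,0,0}\cap\Gamma_{1,1,1}|$ directly from the explicit matrix descriptions of the stabilizers given in the proof of Lemma~\ref{lem:4.2} (the ``sanity check'' you mention at the end), and then divides into $|\Gamma_{0,0,0}|$ to get the weight ratios. You instead invoke orbit--stabilizer: since $\Gamma_{0,0,0}\cong PGL_4(\mathbb{F}_q)$ acts transitively on the Grassmannian of $(4-i)$-dimensional subspaces of $\mathbb{F}_q^4$ parametrizing the color-$i$ building-neighbors of $[\mathcal{O}_F^4]$, the ratio $w(e_i)/w(v_{0,0,0})=[\Gamma_{0,0,0}:\Gamma_{e_i}]$ is the Gaussian binomial $\binom{4}{4-i}_q=\binom{4}{i}_q$, and evaluating these gives the three coefficients. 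The orbit-counting interpretation of $w(e)/w(v)$ is correct and makes the coefficients' identity as Grassmannian cardinalities transparent, which the paper's brute-force calculation obscures. The tradeoff is that the paper's matrix bookkeeping, while less illuminating here, extends uniformly to the subsequent Lemmata~\ref{lem:4.6}--\ref{lem:4.11} where the vertex $v_{\ell,m,n}$ has several quotient neighbors of each color and the link is no longer a full Grassmannian acted on transitively by the stabilizer, so the clean orbit-counting collapse you exploit is special to the origin vertex.
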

 \begin{proof}
It follows from the proof of Lemma \ref{lem:4.2} that 
\begin{equation}\nonumber
\begin{split}
&|\Gamma_{0,0,0}\cap\Gamma_{1,0,0}|=(q-1)^3(q^2+q+1)(q+1)q^6\\
&|\Gamma_{0,0,0}\cap\Gamma_{1,1,0}|=(q-1)^3(q+1)^2q^6\\
&|\Gamma_{0,0,0}\cap\Gamma_{1,1,1}|=(q-1)^3(q^2+q+1)(q+1)q^6.
\end{split}
\end{equation}
This shows that $\frac{w(v_{0,0,0},v_{1,0,0})}{w(v_{0,0,0})}=q^3+q^2+q+1$, $\frac{w(v_{0,0,0},v_{1,1,0})}{w(v_{0,0,0})}=q^4+q^3+2q^2+q+1$ and $\frac{w(v_{0,0,0},v_{1,1,1})}{w(v_{0,0,0})}=q^3+q^2+q+1$. Thus we have \eqref{eq:4.4}.
\end{proof}
For any $\ell>0$, we have the following lemmata.
\begin{lem}\label{lem:4.6}For any $f\in L^2_w(X)$ and $\ell>0$,
\begin{equation}\label{eq:4.5}
\begin{split}
&A_{w,1}f(v_{\ell,0,0})=(q^3+q^2+q)f(v_{\ell,1,0})+f(v_{\ell+1,0,0})\\
&A_{w,2}f(v_{\ell,0,0})=(q^4+q^3+q^2)f(v_{\ell,1,1})+(q^2+q+1)f(v_{\ell+1,1,0})\\
&A_{w,3}f(v_{\ell,0,0})=q^3f(v_{\ell-1,0,0})+(q^2+q+1)f(v_{\ell+1,1,1}).
\end{split}
\end{equation}
\end{lem}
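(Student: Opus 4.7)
The plan is to evaluate each $A_{w,i}f(v_{\ell,0,0})$ directly from its definition
\[
A_{w,i}f(v_{\ell,0,0}) = \sum_{\substack{(v_{\ell,0,0}, v') \in E \\ \tau(v')=\tau(v_{\ell,0,0})+i}} \frac{w(v_{\ell,0,0}, v')}{w(v_{\ell,0,0})} f(v').
\]
First I would read off the six neighbors of $v_{\ell,0,0}$ (for $\ell>0$) from the rows of Table~\ref{t1} corresponding to $V_{\ell,0,0}^i$. For each neighbor $v' = v_{\ell',m',n'}$, the weight ratio equals $|\Gamma_{\ell,0,0}|/|\Gamma_e|$, where $\Gamma_e$ is the pointwise stabilizer of an adjacent lift of $e$ in $\mathcal{B}_4$. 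A short lattice check shows that for every listed neighbor, the canonical lift $x_{\ell',m',n'}W$ is actually adjacent to $x_{\ell,0,0}W$ in $\mathcal{B}_4$ (the inclusions $\pi L' \subset L \subset L'$ hold with indices $q^i$ and $q^{d-i}$); hence $\Gamma_e = \Gamma_{\ell,0,0} \cap \Gamma_{\ell',m',n'}$.

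The core computation is to determine $|\Gamma_{\ell,0,0} \cap \Gamma_{\ell',m',n'}|$ for each of the six neighbors. Using the explicit matrix descriptions of stabilizers written out in the proofs of Lemmas~\ref{lem:4.2}, \ref{lem:4.3}, and \ref{lem:4.4}, each intersection is defined by combining both sets of vanishing constraints on the lower-triangular entries $a_{ij}$ and intersecting the polynomial degree constraints (e.g., $P^{\ell}(t) \cap P^{\ell+1}(t) = P^{\ell}(t)$, $P^{\ell}(t) \cap P^{\ell-1}(t) = P^{\ell-1}(t)$, or $P^{\ell}(t) \cap \mathbb{F}_q = \mathbb{F}_q$). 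Two cases are immediate: since the conditions for $\Gamma_{\ell',0,0}$ only adjust the degree bounds on $a_{12}, a_{13}, a_{14}$, we get $\Gamma_{\ell-1,0,0} \subset \Gamma_{\ell,0,0} \subset \Gamma_{\ell+1,0,0}$, producing the coefficients $q^3$ for $f(v_{\ell-1,0,0})$ and $1$ for $f(v_{\ell+1,0,0})$. The remaining four intersections are counted by multiplying a factor $q^{k+1}$ for each polynomial entry of degree $\le k$, together with the appropriate $|GL_j(\mathbb{F}_q)|$ factors for the free scalar blocks (either one $1\times 1$ and one $3\times 3$, or two invertible $2\times 2$ blocks), and then dividing by $q-1$ to account for the quotient by $\{\lambda I\}$.

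The main obstacle is the bookkeeping: carefully tracking, for each of the six intersections, which $a_{ij}$ are constrained to $\mathbb{F}_q$, to $P^k(t)$ for various $k$, or to zero. Once the cardinalities are in hand, dividing $|\Gamma_{\ell,0,0}| = (q-1)^3(q^2+q+1)(q+1)q^{3\ell+6}$ by each yields precisely the six coefficients stated in \eqref{eq:4.5}. A useful sanity check is that the color-$i$ coefficients should sum to the total number of color $+i$ neighbors of any vertex of $\mathcal{B}_4$, namely the number of $i$-dimensional subspaces of $\mathbb{F}_q^4$; this gives $q^3+q^2+q+1$, $q^4+q^3+2q^2+q+1$, and $q^3+q^2+q+1$ for $i=1,2,3$, which matches the sums of the claimed coefficients.
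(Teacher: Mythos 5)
Your approach matches the paper's: read the neighbors from Table~\ref{t1}, compute the six stabilizer intersections $\Gamma_{\ell,0,0}\cap\Gamma_{\ell',m',n'}$ from the matrix descriptions in Lemmas~\ref{lem:4.2}--\ref{lem:4.4}, and divide to obtain the coefficients. The sanity check via subspace counts is a nice addition.

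However, there is a small gap in the shortcut you use for the two immediate cases. You assert $\Gamma_{\ell-1,0,0}\subset\Gamma_{\ell,0,0}$ for all $\ell>0$, justified by the observation that the conditions for $\Gamma_{\ell',0,0}$ only adjust the degree bounds on $a_{12},a_{13},a_{14}$. That is true for $\ell'>0$, but $\Gamma_{0,0,0}=PGL(4,\mathbb{F}_q)$ has no vanishing constraints on the lower-triangular entries $a_{21},a_{31},a_{41}$, so $\Gamma_{0,0,0}\not\subset\Gamma_{1,0,0}$: for instance, a permutation matrix swapping rows $1$ and $2$ lies in $\Gamma_{0,0,0}$ but not in $\Gamma_{1,0,0}$. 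Thus the containment argument fails precisely at $\ell=1$, which is included in the lemma's hypothesis $\ell>0$. The conclusion is still correct because $|\Gamma_{1,0,0}\cap\Gamma_{0,0,0}|=(q-1)^3(q^2+q+1)(q+1)q^6$ (computed in the proof of Lemma~\ref{lem:4.5}), giving $|\Gamma_{1,0,0}|/|\Gamma_{1,0,0}\cap\Gamma_{0,0,0}|=q^3$ as needed, but you should treat $\ell=1$ separately rather than invoke a containment that does not hold.
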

\begin{proof}It follows from the proof of Lemma \ref{lem:4.2} and Lemma \ref{lem:4.3} that 
\begin{equation}\nonumber
\begin{split}
&|\Gamma_{\ell,0,0}\cap\Gamma_{\ell,1,0}|=(q-1)^3(q+1)q^{3\ell+5}\\
&|\Gamma_{\ell,0,0}\cap\Gamma_{\ell,1,1}|=(q-1)^3(q+1)q^{3\ell+4}\\
&|\Gamma_{\ell,0,0}\cap\Gamma_{\ell+1,0,0}|=(q-1)^3(q^2+q+1)(q+1)q^{3\ell+6}\\
&|\Gamma_{\ell,0,0}\cap\Gamma_{\ell+1,1,0}|=|\Gamma_{\ell,0,0}\cap\Gamma_{\ell+1,1,1}|=(q-1)^3(q+1)q^{3\ell+6}.
\end{split}
\end{equation}
Similar to the proof of Lemma \ref{lem:4.5}, we have \eqref{eq:4.5}.
\end{proof}
\begin{lem}\label{lem:4.7}
For any $f\in L^2_w(X)$ and $\ell>0$,
\begin{equation}\label{eq:4.6}
\begin{split}
&A_{w,1}f(v_{\ell,\ell,0})=(q^3+q^2)f(v_{\ell,\ell,1})+(q+1)f(v_{\ell+1,\ell,0})\\
&A_{w,2}f(v_{\ell,\ell,0})=q^4f(v_{\ell-1,\ell-1,0})+q(q+1)^2f(v_{\ell+1,\ell,1})+f(v_{\ell+1,\ell+1,0})\\
&A_{w,3}f(v_{\ell,\ell,0})=(q^3+q^2)f(v_{\ell,\ell-1,0})+(q+1)f(v_{\ell+1,\ell+1,1}).
\end{split}
\end{equation}
\end{lem}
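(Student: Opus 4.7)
The strategy mirrors that of Lemmas~\ref{lem:4.5} and \ref{lem:4.6}. By the definition of the weighted adjacency operator together with the natural weights $w(v)=(q-1)^3q^6/|\Gamma_v|$ and $w(e)=(q-1)^3q^6/|\Gamma_e|$, we have
\[
A_{w,i}f(v_{\ell,\ell,0})=\sum_{v\in V_{\ell,\ell,0}^i}\frac{|\Gamma_{\ell,\ell,0}|}{|\Gamma_{\ell,\ell,0}\cap\Gamma_v|}\,f(v).
\]
Table~\ref{t1} lists all seven neighbors of $v_{\ell,\ell,0}$, and Lemma~\ref{lem:4.2} gives $|\Gamma_{\ell,\ell,0}|=(q-1)^3(q+1)^2q^{4\ell+6}$, so the task reduces to computing the seven intersection cardinalities $|\Gamma_{\ell,\ell,0}\cap\Gamma_v|$.

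For each neighbor $v=v_{\ell',m',n'}$, I would begin from the explicit matrix description of $\Gamma_{\ell',m',n'}$ appearing in the proof of whichever of Lemmas~\ref{lem:4.2}--\ref{lem:4.4} applies, then intersect its matrix constraints with those describing $\Gamma_{\ell,\ell,0}$, and finally count using the same linear-algebra argument used for $|\Gamma_{0,0,0}|$ in Lemma~\ref{lem:4.2}. The intersection is obtained concretely by (i) taking the union of the sets of entries forced to vanish and (ii) taking the minimum of the two polynomial-degree bounds on each remaining off-diagonal entry. For several neighbors the inclusion of one stabilizer in the other is immediate: for example $\Gamma_{\ell,\ell,0}\subseteq\Gamma_{\ell+1,\ell+1,0}$, because raising the index $\ell$ only enlarges the four polynomial spaces $P^\ell(t)$ in which $a_{13},a_{14},a_{23},a_{24}$ lie. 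This immediately yields the coefficient $1$ in front of $f(v_{\ell+1,\ell+1,0})$.

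The only delicate case is the neighbor $v_{\ell+1,\ell,1}$ in $A_{w,2}$. For $\ell\ge 2$ its stabilizer falls under the strictly-decreasing case of Lemma~\ref{lem:4.4}, while for $\ell=1$ it is the $\Gamma_{2,1,1}$ of Lemma~\ref{lem:4.3}; in both cases the intersection with $\Gamma_{\ell,\ell,0}$ kills the parameters $a_{21}$ and $a_{43}$, which were free in $\Gamma_{\ell,\ell,0}$, and tightens the polynomial-degree bound on $a_{23}$ from $P^\ell(t)$ to $P^{\ell-1}(t)$. Counting yields $|\Gamma_{\ell,\ell,0}\cap\Gamma_{\ell+1,\ell,1}|=(q-1)^3q^{4\ell+5}$ and hence the ratio $q(q+1)^2$. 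Carrying out the analogous bookkeeping for the remaining five neighbors produces the ratios $q^3+q^2$, $q+1$, $q^4$, $q^3+q^2$, and $q+1$, and substitution into the displayed sum for each $A_{w,i}$ gives exactly~\eqref{eq:4.6}. The main obstacle, then, is purely combinatorial: keeping track of the overlapping constraints and handling the boundary case $\ell=1$, where the neighbors $v_{\ell-1,\ell-1,0}$, $v_{\ell,\ell-1,0}$, and $v_{\ell,\ell,1}$ have stabilizers governed by the special formulas of Lemma~\ref{lem:4.2} rather than Lemma~\ref{lem:4.3}.
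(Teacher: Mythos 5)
Your proposal is correct and follows essentially the same approach as the paper's own proof: compute the seven stabilizer intersection cardinalities from the explicit matrix descriptions in Lemmas~\ref{lem:4.2}--\ref{lem:4.4} and take ratios with $|\Gamma_{\ell,\ell,0}|$. You actually supply slightly more detail than the paper does (which merely lists the intersection cardinalities), in particular by handling explicitly the intersection $\Gamma_{\ell,\ell,0}\cap\Gamma_{\ell-1,\ell-1,0}$, which the paper's displayed list omits, and by flagging the $\ell=1$ boundary cases where the neighbors' stabilizers fall under Lemma~\ref{lem:4.2} rather than Lemma~\ref{lem:4.3}.
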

\begin{proof}It follows from Lemma \ref{lem:4.2} and Lemma \ref{lem:4.3} that  
\begin{equation}\nonumber
\begin{split}
&|\Gamma_{\ell,\ell,0}\cap\Gamma_{\ell,\ell-1,0}|=|\Gamma_{\ell,\ell,0}\cap\Gamma_{\ell,\ell,1}|=(q-1)^3(q+1)q^{4\ell+4}\\
&|\Gamma_{\ell,\ell,0}\cap\Gamma_{\ell+1,\ell,0}|=|\Gamma_{\ell,\ell,0}\cap\Gamma_{\ell+1,\ell+1,1}|=(q-1)^3(q+1)q^{4\ell+6}\\
&|\Gamma_{\ell,\ell,0}\cap\Gamma_{\ell+1,\ell,1}|=(q-1)^3q^{4\ell+5}\\
&|\Gamma_{\ell,\ell,0}\cap\Gamma_{\ell+1,\ell+1,0}|=(q-1)^3(q+1)^2q^{4\ell+6}.
\end{split}
\end{equation}
Similar to the proof of Lemma \ref{lem:4.5}, we have \eqref{eq:4.6}.
\end{proof}
\begin{lem}\label{lem:4.8}For any $f\in L_w^2(X)$ and $\ell>0$,
\begin{equation}\label{eq:4.7}
\begin{split}
&A_{w,1}f(v_{\ell,\ell,\ell})=q^3f(v_{\ell-1,\ell-1,\ell-1})+(q^2+q+1)f(v_{\ell+1,\ell,\ell})\\
&A_{w,2}f(v_{\ell,\ell,\ell})=(q^4+q^3+q^2)f(v_{\ell,\ell-1,\ell-1})+(q^2+q+1)f(v_{\ell+1,\ell+1,\ell})\\
&A_{w,3}f(v_{\ell,\ell,\ell})=(q^3+q^2+q)f(v_{\ell,\ell,\ell-1})+f(v_{\ell+1,\ell+1,\ell+1}).
\end{split}
\end{equation}
\end{lem}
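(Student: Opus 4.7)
\medskip

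The proof runs along the same lines as Lemma~\ref{lem:4.5}, Lemma~\ref{lem:4.6}, and Lemma~\ref{lem:4.7}. The plan is to read off the three sets of neighbors of $v_{\ell,\ell,\ell}$ from the last row of Table~\ref{t1}, compute the stabilizer intersections $\Gamma_{\ell,\ell,\ell}\cap \Gamma_{v}$ for each such neighbor $v$, and then form the ratio $|\Gamma_{\ell,\ell,\ell}|/|\Gamma_{\ell,\ell,\ell}\cap\Gamma_v|$, which by definition of $w$ equals $w(e)/w(v_{\ell,\ell,\ell})$ for the corresponding edge $e$. Substituting into the definition of $A_{w,i}$ produces the claimed coefficients.

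For each intersection I plan to describe the elements explicitly as integer matrices in $\Gamma$ with prescribed zero pattern and prescribed polynomial-degree bounds on the remaining entries, exactly as in the matrix descriptions given in the proofs of Lemmas~\ref{lem:4.2}, \ref{lem:4.3}, and \ref{lem:4.4}. For instance, $\Gamma_{\ell,\ell,\ell}\cap \Gamma_{\ell-1,\ell-1,\ell-1}$ is obtained from $\Gamma_{\ell,\ell,\ell}$ by shrinking $a_{14},a_{24},a_{34}$ from $P^\ell(t)$ to $P^{\ell-1}(t)$, giving a factor $q^{-3}$ in the count and hence ratio $q^3$; $\Gamma_{\ell,\ell,\ell}\cap \Gamma_{\ell+1,\ell,\ell}$ imposes the additional relations $a_{21}=a_{31}=0$ in the top-left $3\times 3$ block of $\Gamma_{\ell,\ell,\ell}$, reducing $GL(3,\mathbb{F}_q)$ to the block $GL(1)\times GL(2)$ times three free entries, which yields the ratio $q^2+q+1$; and $\Gamma_{\ell,\ell,\ell}\cap \Gamma_{\ell+1,\ell+1,\ell+1}$ coincides with $\Gamma_{\ell,\ell,\ell}$ itself since the $P^{\ell+1}$ constraints on $a_{14},a_{24},a_{34}$ are looser, giving ratio $1$. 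The remaining three intersections, corresponding to $v_{\ell,\ell-1,\ell-1}$, $v_{\ell+1,\ell+1,\ell}$, and $v_{\ell,\ell,\ell-1}$, are handled in the same way by combining the appropriate zero patterns with $GL_2(\mathbb{F}_q)$ counts in the relevant $2\times 2$ block and polynomial-degree restrictions in the last column.

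Collecting these computations yields
\begin{align*}
|\Gamma_{\ell,\ell,\ell}\cap \Gamma_{\ell-1,\ell-1,\ell-1}| &= (q-1)^3(q^2+q+1)(q+1)q^{3\ell+3},\\
|\Gamma_{\ell,\ell,\ell}\cap \Gamma_{\ell+1,\ell,\ell}| &= (q-1)^3(q+1)q^{3\ell+6},\\
|\Gamma_{\ell,\ell,\ell}\cap \Gamma_{\ell,\ell-1,\ell-1}| &= (q-1)^3(q+1)q^{3\ell+4},\\
|\Gamma_{\ell,\ell,\ell}\cap \Gamma_{\ell+1,\ell+1,\ell}| &= (q-1)^3(q+1)q^{3\ell+6},\\
|\Gamma_{\ell,\ell,\ell}\cap \Gamma_{\ell,\ell,\ell-1}| &= (q-1)^3(q+1)q^{3\ell+5},\\
|\Gamma_{\ell,\ell,\ell}\cap \Gamma_{\ell+1,\ell+1,\ell+1}| &= (q-1)^3(q^2+q+1)(q+1)q^{3\ell+6},
\end{align*}
which, dividing $|\Gamma_{\ell,\ell,\ell}|$ from Lemma~\ref{lem:4.2} by each of these, produces the coefficients $q^3,\,q^2+q+1,\,q^4+q^3+q^2,\,q^2+q+1,\,q^3+q^2+q,\,1$ in the order they appear in~\eqref{eq:4.7}. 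There is no genuine obstacle here: the only potential pitfall is the bookkeeping in the $3\times 3$ block of $\Gamma_{\ell,\ell,\ell}$, where reducing from $GL_3(\mathbb{F}_q)$ to a block-triangular subgroup must be done carefully (and the quotient by $\{\lambda I:\lambda\in\mathbb{F}_q^\times\}$ must be taken consistently with the $(q-1)^3$ normalization); once this is done uniformly, the six ratios appear immediately and yield~\eqref{eq:4.7}.
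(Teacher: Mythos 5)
Your proposal is correct and follows essentially the same route as the paper: read the neighbors of $v_{\ell,\ell,\ell}$ from Table~\ref{t1}, compute each stabilizer intersection by imposing the stronger of the two sets of zero patterns and degree bounds, and form the ratio with $|\Gamma_{\ell,\ell,\ell}|$ from Lemma~\ref{lem:4.2}. Your table of six intersection cardinalities matches the paper's (and you correctly supply $|\Gamma_{\ell,\ell,\ell}\cap\Gamma_{\ell-1,\ell-1,\ell-1}|=(q-1)^3(q^2+q+1)(q+1)q^{3\ell+3}$, which the paper's displayed list omits even though it is needed for the coefficient $q^3$ in the first line of~\eqref{eq:4.7}); the only slip is in the prose for $\Gamma_{\ell,\ell,\ell}\cap\Gamma_{\ell+1,\ell,\ell}$, where the $(1,2)$-parabolic of $GL(3,\mathbb{F}_q)$ has unipotent radical of order $q^2$, so it is ``two free entries,'' not three — but this does not affect the cardinality you wrote down or the resulting ratio $q^2+q+1$.
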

\begin{proof}
Using Lemma \ref{lem:4.2} and Lemma \ref{lem:4.3}, we have
\begin{equation}\nonumber
\begin{split}
&|\Gamma_{\ell,\ell,\ell}\cap\Gamma_{\ell,\ell-1,\ell-1}|=(q-1)^3(q+1)q^{3\ell+4}\\
&|\Gamma_{\ell,\ell,\ell}\cap\Gamma_{\ell,\ell,\ell-1}|=(q-1)^3(q+1)q^{3\ell+5}\\
&|\Gamma_{\ell,\ell,\ell}\cap\Gamma_{\ell+1,\ell,\ell}|=|\Gamma_{\ell,\ell,\ell}\cap\Gamma_{\ell+1,\ell+1,\ell}|=(q-1)^3(q+1)q^{3\ell+6}\\
&|\Gamma_{\ell,\ell,\ell}\cap\Gamma_{\ell+1,\ell+1,\ell+1}|=(q-1)^3(q^2+q+1)(q+1)q^{3\ell+6}.
\end{split}
\end{equation}
This shows \eqref{eq:4.7}.
\end{proof} 
For any $\ell>m>0$, we have the following lemmata.
\begin{lem}\label{lem:4.9} For any $f\in L^2_w(X)$ and $\ell>m$,
\begin{equation}\label{eq:4,8}
\begin{split}
A_{w,1}f(v_{\ell,m,0})=&\,(q^3+q^2)f(v_{\ell,m,1})+qf(v_{\ell,m+1,0})+f(v_{\ell+1,m,0})\\
A_{w,2}f(v_{\ell,m,0})=&\,q^4f(v_{\ell-1,m-1,0})+(q^3+q^2)f(v_{\ell,m+1,1})\\
&+(q^2+q)f(v_{\ell+1,m,1})+f(v_{\ell+1,m+1,0})\\
A_{w,3}f(v_{\ell,m,0})=&\,q^3f(v_{\ell-1,m,0})+q^2f(v_{\ell,m-1,0})+(q+1)f(v_{\ell+1,m+1,1}).
\end{split}
\end{equation}
\end{lem}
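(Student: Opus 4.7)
The proof plan is to follow the same template as Lemmas~\ref{lem:4.5}--\ref{lem:4.8}. First, I would read off from Table~\ref{t1} the list of neighbors of $v_{\ell,m,0}$ split according to color:
\begin{align*}
V_{\ell,m,0}^1 &= \{v_{\ell,m,1},\, v_{\ell,m+1,0},\, v_{\ell+1,m,0}\},\\
V_{\ell,m,0}^2 &= \{v_{\ell-1,m-1,0},\, v_{\ell,m+1,1},\, v_{\ell+1,m,1},\, v_{\ell+1,m+1,0}\},\\
V_{\ell,m,0}^3 &= \{v_{\ell-1,m,0},\, v_{\ell,m-1,0},\, v_{\ell+1,m+1,1}\}.
\end{align*}
By the definition of the weighted operator and of the weight $w$, for each edge $e=(v_{\ell,m,0}, v_{\ell',m',n'})$ the coefficient appearing in front of $f(v_{\ell',m',n'})$ in $A_{w,i}f(v_{\ell,m,0})$ is
\[
\frac{w(e)}{w(v_{\ell,m,0})} \;=\; \frac{|\Gamma_{\ell,m,0}|}{|\Gamma_{\ell,m,0}\cap\Gamma_{\ell',m',n'}|}.
\]
Hence the task reduces to computing the seven intersection cardinalities and dividing them into $|\Gamma_{\ell,m,0}|=(q-1)^3(q+1)q^{3\ell+m+6}$ from Lemma~\ref{lem:4.3}.

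Next I would, for each neighbor $v_{\ell',m',n'}$, intersect the explicit matrix descriptions of $\Gamma_{\ell,m,0}$ and $\Gamma_{\ell',m',n'}$ given in the proofs of Lemmas~\ref{lem:4.3} and~\ref{lem:4.4}. Concretely, an element of the intersection is an element of $\Gamma$ for which both $x_{\ell,m,0}^{-1}\gamma x_{\ell,m,0}$ and $x_{\ell',m',n'}^{-1}\gamma x_{\ell',m',n'}$ lie in $W$, so each off-diagonal entry $a_{ij}$ must simultaneously be a polynomial of degree bounded by \emph{both} constraints, i.e., the degree bound is the minimum of the two. The diagonal blocks arising from equal diagonal powers contribute the usual $GL$-factors over $\mathbb{F}_q$ (whose orders were already computed in Lemmas~\ref{lem:4.2}--\ref{lem:4.4}), and the strictly off-diagonal entries contribute powers of $q$ according to the combined polynomial degree bounds. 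Carrying this out yields values such as
\[
|\Gamma_{\ell,m,0}\cap \Gamma_{\ell,m,1}|=(q-1)^3(q+1)q^{3\ell+m+4},\qquad |\Gamma_{\ell,m,0}\cap \Gamma_{\ell+1,m+1,0}|=(q-1)^3(q+1)q^{3\ell+m+6},
\]
and analogously for the remaining five intersections; dividing each into $|\Gamma_{\ell,m,0}|$ gives the asserted coefficients $q^3+q^2,\, q,\, 1,\, q^4,\, q^3+q^2,\, q^2+q,\, 1,\, q^3,\, q^2,\, q+1$ appearing in \eqref{eq:4,8}.

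The main obstacle is simply bookkeeping: one must correctly identify, for each of the seven neighbors, which off-diagonal matrix entries change their allowed degree when passing from $x_{\ell,m,0}$ to $x_{\ell',m',n'}$, and which diagonal block structure survives. The cases $v_{\ell-1,m-1,0}$ and $v_{\ell+1,m+1,0}$ (color~$2$) require extra care because the diagonal exponents only differ by a uniform shift, so the intersection still contains a $(q+1)$ factor coming from an $\mathbb{F}_q$-valued $GL_2$-block; by contrast the mixed neighbors $v_{\ell,m+1,1}$ and $v_{\ell+1,m,1}$ break this block structure and yield the purely $q$-power contributions $q^3+q^2$ and $q^2+q$. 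Once each intersection order is computed, the conclusion~\eqref{eq:4,8} follows immediately, as in the proofs of the preceding lemmas.
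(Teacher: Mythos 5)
Your overall plan is exactly the paper's: read the neighbors of $v_{\ell,m,0}$ from Table~\ref{t1}, reduce each coefficient to the ratio $|\Gamma_{\ell,m,0}|/|\Gamma_{\ell,m,0}\cap\Gamma_{\ell',m',n'}|$, and compute each intersection by taking entrywise minima of the polynomial-degree bounds and keeping track of which $GL$-blocks over $\mathbb{F}_q$ survive. This matches the proofs of Lemmas~\ref{lem:4.2}--\ref{lem:4.8} and is what the paper does in Lemma~\ref{lem:4.9}.

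However, there is a concrete error in the first example value you wrote. You claim
\[
|\Gamma_{\ell,m,0}\cap\Gamma_{\ell,m,1}|=(q-1)^3(q+1)q^{3\ell+m+4},
\]
but the correct value (and the one the paper records) is $(q-1)^3q^{3\ell+m+4}$, with \emph{no} factor of $(q+1)$. The reason is that passing from $v_{\ell,m,0}$ to $v_{\ell,m,1}$ forces $a_{43}=0$: in $\Gamma_{\ell,m,0}$ the entry $a_{43}$ lies in $\mathbb{F}_q$ and is part of an invertible $2\times 2$ block $\begin{pmatrix}a_{33}&a_{34}\\a_{43}&a_{44}\end{pmatrix}$, while in $\Gamma_{\ell,m,1}$ one has $a_{43}=0$. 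So in the intersection the $GL_2(\mathbb{F}_q)$-block collapses to a Borel subgroup, and the $(q+1)$ drops out. Your own heuristic paragraph identified precisely this mechanism for the color-$2$ neighbors $v_{\ell,m+1,1}$ and $v_{\ell+1,m,1}$, but the same collapse already happens for the color-$1$ neighbor $v_{\ell,m,1}$. With your mistaken value the coefficient of $f(v_{\ell,m,1})$ would come out to
\[
\frac{(q-1)^3(q+1)q^{3\ell+m+6}}{(q-1)^3(q+1)q^{3\ell+m+4}}=q^2,
\]
rather than the stated $q^3+q^2$. Once you uniformly apply the observation that any neighbor with third index $n'=1$ kills the $(q+1)$ block, your plan yields the stated coefficients.
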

\begin{proof}
Using Lemma \ref{lem:4.2}, Lemma \ref{lem:4.3} and Lemma \ref{lem:4.4}, we have
\begin{equation}\nonumber
\begin{split}
&|\Gamma_{\ell,m,0}\cap\Gamma_{\ell,m,1}|=|\Gamma_{\ell,m,0}\cap\Gamma_{\ell,m+1,1}|=(q-1)^3q^{3\ell+m+4}\\
&|\Gamma_{\ell,m,0}\cap\Gamma_{\ell,m+1,0}|=(q-1)^3(q+1)q^{3\ell+m+5}\\
&|\Gamma_{\ell,m,0}\cap\Gamma_{\ell+1,m,0}|=|\Gamma_{\ell,m,0}\cap\Gamma_{\ell+1,m+1,0}|=(q-1)^3(q+1)q^{3\ell+m+6}\\
&|\Gamma_{\ell,m,0}\cap\Gamma_{\ell-1,m-1,0}|=(q-1)^3(q+1)q^{3\ell+m+2}\\
&|\Gamma_{\ell,m,0}\cap\Gamma_{\ell+1,m,1}|=(q-1)^3q^{3\ell+m+5}\\
&|\Gamma_{\ell,m,0}\cap\Gamma_{\ell+1,m+1,1}|=(q-1)^3q^{3\ell+m+6}.
\end{split}
\end{equation}
which implies \eqref{eq:4,8}.
\end{proof}
\begin{lem}\label{eq:4.8} For any $f\in L_w^2(X)$ and $\ell>m$,
\begin{equation}\label{eq:4.9}
\begin{split}
A_{w,1}f(v_{\ell,m,m})=&q^3f(v_{\ell-1,m-1,m-1})+(q^2+q)f(v_{\ell,m+1,m})+f(v_{\ell+1,m,m})\\
A_{w,2}f(v_{\ell,m,m})=&(q^4+q^3)f(v_{\ell-1,m,m-1})+q^2f(v_{\ell,m-1,m-1})\\
&+q^2f(v_{\ell,m+1,m+1})+(q+1)f(v_{\ell+1,m+1,m})\\
A_{w,3}f(v_{\ell,m,m})=&q^3f(v_{\ell-1,m,m})+(q^2+q)f(v_{\ell,m,m-1})+f(v_{\ell+1,m+1,m+1})
\end{split}
\end{equation}
\end{lem}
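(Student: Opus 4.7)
The plan is to follow the same recipe as Lemmas~\ref{lem:4.6}--\ref{lem:4.9}: read the ten neighbors of $v_{\ell,m,m}$ off Table~\ref{t1} (three of color $1$, four of color $2$, and three of color $3$), compute the intersection $\Gamma_{\ell,m,m}\cap\Gamma_{\ell',m',n'}$ for each of them, and then assemble the coefficients via
\[
A_{w,i}f(v_{\ell,m,m})=\sum_{v_{\ell',m',n'}\in V^i_{\ell,m,m}}\frac{|\Gamma_{\ell,m,m}|}{|\Gamma_{\ell,m,m}\cap\Gamma_{\ell',m',n'}|}\,f(v_{\ell',m',n'}).
\]
Lemma~\ref{lem:4.3} already supplies $|\Gamma_{\ell,m,m}|=(q-1)^3(q+1)q^{3\ell+6}$, so only the ten denominators remain to be determined.

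Each denominator is obtained by intersecting the explicit matrix descriptions in Lemmas~\ref{lem:4.3} and~\ref{lem:4.4}: an element lying in both stabilizers must vanish on every strictly lower-triangular entry forced to vanish on either side; every upper-triangular entry constrained to $P^{k}(t)$ on one side and $P^{k'}(t)$ on the other must lie in $P^{\min(k,k')}(t)$; and the surviving $\mathbb{F}_q$-valued diagonal-block entries must simultaneously form an invertible matrix in both block decompositions. One then counts the resulting set using $|P^{j}(t)|=q^{j+1}$ together with the standard $GL_k(\mathbb{F}_q)$-cardinalities for the free blocks, and divides out the central $\mathbb{F}_q^\times$ at the end.

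I would process the ten cases in turn. The six neighbors sharing the $(*,m,m)$-type block pattern of $(\ell,m,m)$ --- namely $v_{\ell-1,m,m}$, $v_{\ell+1,m,m}$, $v_{\ell-1,m-1,m-1}$, $v_{\ell+1,m+1,m+1}$, $v_{\ell,m-1,m-1}$, and $v_{\ell,m+1,m+1}$ --- are straightforward: only the polynomial-degree constraints tighten, so the ratio of cardinalities is a pure power of $q$. The delicate cases are the four neighbors whose block patterns clash with that of $\Gamma_{\ell,m,m}$, namely $v_{\ell,m+1,m}$, $v_{\ell,m,m-1}$, $v_{\ell-1,m,m-1}$, and $v_{\ell+1,m+1,m}$. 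Here the free $2\times 2$ block of $\Gamma_{\ell,m,m}$ at rows/columns $2,3$ must coexist with a different $2\times 2$ block structure coming from the other stabilizer; one (or both) of these blocks collapses to an upper-triangular form, and keeping careful track of which factors of $(q+1)$ appear or cancel in each ratio is the main bookkeeping obstacle. Once the ten intersection sizes are tabulated, the three identities in \eqref{eq:4.9} follow by direct division, exactly as in the preceding lemmas.
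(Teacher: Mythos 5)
Your proposal follows exactly the paper's method: the paper computes the stabilizer intersections $|\Gamma_{\ell,m,m}\cap\Gamma_{\ell',m',n'}|$ for the ten neighbors of $v_{\ell,m,m}$ listed in Table~\ref{t1} (it tabulates eight explicitly, the remaining two being immediate) and then divides, exactly as you describe. Your identification of the six ``pure power of $q$'' cases versus the four cases in which the $2\times 2$ block structure clashes and a factor of $q+1$ enters is also consistent with the paper's listed intersection sizes, so the plan is sound and matches.
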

\begin{proof}From Lemma \ref{lem:4.2}, Lemma \ref{lem:4.3} and Lemma \ref{lem:4.4}, we get
\begin{equation}\nonumber
\begin{split}
&|\Gamma_{\ell,m,m}\cap\Gamma_{\ell-1,m-1,m-1}|=(q-1)^3(q+1)q^{3\ell+3}\\
&|\Gamma_{\ell,m,m}\cap\Gamma_{\ell-1,m,m-1}|=(q-1)^3q^{3\ell+3}\\
&|\Gamma_{\ell,m,m}\cap\Gamma_{\ell-1,m,m}|=(q-1)^3(q+1)q^{3\ell+3}\\
&|\Gamma_{\ell,m,m}\cap\Gamma_{\ell,m-1,m-1}|=(q-1)^3(q+1)q^{3\ell+4}\\
&|\Gamma_{\ell,m,m}\cap\Gamma_{\ell,m,m-1}|=|\Gamma_{\ell,m,m}\cap\Gamma_{\ell,m+1,m}|=(q-1)^3q^{3\ell+5}\\
&|\Gamma_{\ell,m,m}\cap\Gamma_{\ell+1,m,m}|=(q-1)^3(q+1)q^{3\ell+6}\\
&|\Gamma_{\ell,m,m}\cap\Gamma_{\ell+1,m+1,m}|=(q-1)^3q^{3\ell+6}.
\end{split}
\end{equation}
which yields \eqref{eq:4.9}.
\end{proof}
\begin{lem}For any $f\in L^2_w(X)$ and $\ell>m$,
\begin{equation}\label{eq:4.10}
\begin{split}
A_{w,1}f(v_{\ell,\ell,m})=&\,q^3f(v_{\ell-1,\ell-1,m-1})+q^2f(v_{\ell,\ell,m+1})+(q+1)f(v_{\ell+1,\ell,m})\\
A_{w,2}f(v_{\ell,\ell,m})=&\,q^4f(v_{\ell-1,\ell-1,m})+(q^3+q^2)f(v_{\ell,\ell-1,m-1})\\
&+(q^2+q)f(v_{\ell+1,\ell,m+1})+f(v_{\ell+1,\ell+1,m})\\
A_{w,3}f(v_{\ell,\ell,m})=&\,(q^3+q^2)f(v_{\ell,\ell-1,m})+qf(v_{\ell,\ell,m-1})+f(v_{\ell+1,\ell+1,m+1}).
\end{split}
\end{equation}
\end{lem}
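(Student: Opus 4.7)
The plan is to follow the template established by the preceding lemmas of this section. By the definition of $A_{w,i}$ together with the natural weights, one has
\[
A_{w,i}f(v_{\ell,\ell,m})=\sum_{v\in V_{\ell,\ell,m}^{i}}\frac{|\Gamma_{\ell,\ell,m}|}{|\Gamma_{\ell,\ell,m}\cap\Gamma_{v}|}\,f(v),
\]
where the ten relevant neighbors $v$ are enumerated in the $V_{\ell,\ell,m}^{i}$ rows of Table \ref{t1}. Since Lemma \ref{lem:4.3} supplies $|\Gamma_{\ell,\ell,m}|=(q-1)^{3}(q+1)q^{4\ell-m+6}$, the task reduces to computing these ten intersection cardinalities and checking that the resulting ratios match the coefficients in \eqref{eq:4.10}.

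First I would record the matrix description of $\Gamma_{\ell,\ell,m}$ from the proof of Lemma \ref{lem:4.3}: modulo scalars, its elements are the upper-block-triangular $4\times 4$ matrices whose top-left $2\times 2$ block $B$ lies in $GL_{2}(\mathbb{F}_{q})$, whose $(3,3)$- and $(4,4)$-entries lie in $\mathbb{F}_{q}^{\times}$, whose $(3,4)$-entry lies in $P^{m}(t)$, and whose entries $a_{i3},a_{i4}$ for $i=1,2$ lie in $P^{\ell-m}(t)$ and $P^{\ell}(t)$ respectively. For each neighbor $v_{\ell',m',n'}$, Lemmas \ref{lem:4.2}--\ref{lem:4.4} provide the corresponding matrix model for $\Gamma_{v_{\ell',m',n'}}$, and the intersection is obtained by imposing the union of the two sets of zero constraints on the lower triangle together with the pointwise minimum of the polynomial degree bounds on the surviving entries.

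The main dichotomy to track is whether the neighbor forces $a_{21}=0$. When the neighbor has strictly decreasing first two exponents (so $v_{\ell',m',n'}$ falls under Lemma \ref{lem:4.3} with $\ell'>m'$ or under Lemma \ref{lem:4.4}, as is the case for $v_{\ell+1,\ell,m}$, $v_{\ell,\ell-1,m-1}$, $v_{\ell+1,\ell,m+1}$, and $v_{\ell,\ell-1,m}$), the $GL_{2}$ block $B$ collapses to its upper-triangular Borel subgroup, contributing an extra factor of $q+1$ to the ratio $|\Gamma_{\ell,\ell,m}|/|\Gamma_{\ell,\ell,m}\cap\Gamma_{v}|$ beyond the polynomial-degree shifts; when the neighbor also has its first two exponents equal (as for $v_{\ell-1,\ell-1,m-1}$, $v_{\ell-1,\ell-1,m}$, $v_{\ell+1,\ell+1,m}$, $v_{\ell,\ell,m+1}$, $v_{\ell,\ell,m-1}$, and $v_{\ell+1,\ell+1,m+1}$), the block $B$ survives intact and only polynomial degrees change. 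As a sample computation, $\Gamma_{\ell,\ell,m}\cap\Gamma_{\ell,\ell-1,m}$ forces $a_{21}=0$ and tightens the degree bounds on $a_{23}$ and $a_{24}$ each by one, producing a cardinality $(q-1)^{3}q^{4\ell-m+4}$, whose ratio with $|\Gamma_{\ell,\ell,m}|$ equals $(q+1)q^{2}=q^{3}+q^{2}$, matching the asserted coefficient of $f(v_{\ell,\ell-1,m})$.

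I expect no conceptual obstacle; the remaining work is a systematic case-by-case count, entirely parallel to the proofs of Lemmas \ref{lem:4.7} and \ref{lem:4.8}. A minor subtlety is the consistency at boundary values such as $m=1$ or $\ell=m+1$, where the relevant neighbor's stabilizer must be drawn from Lemma \ref{lem:4.2} rather than Lemma \ref{lem:4.3}; one checks that the resulting intersection cardinalities still produce the same ratios. Carrying out the ten calculations and dividing $|\Gamma_{\ell,\ell,m}|$ by each then recovers the full list of coefficients $q^{3},q^{2},q+1$, $q^{4},q^{3}+q^{2},q^{2}+q,1$, and $q^{3}+q^{2},q,1$ displayed in \eqref{eq:4.10}.
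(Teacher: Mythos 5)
Your proposal is correct and follows essentially the same route as the paper's proof: the paper itself simply records the seven nontrivial intersection cardinalities $|\Gamma_{\ell,\ell,m}\cap\Gamma_{\ell',m',n'}|$ (obtained exactly by the block-triangular matrix descriptions from Lemmas \ref{lem:4.2}--\ref{lem:4.4} and the min-of-degree-bounds intersection rule you describe) and reads off the ratios. Your organizing observation that the extra factor $q+1$ in a coefficient corresponds precisely to the neighbor $v_{\ell',m',n'}$ having $\ell'>m'$ and hence collapsing the $GL_2$ block to a Borel is a useful check the paper leaves implicit, and your sample computation of $|\Gamma_{\ell,\ell,m}\cap\Gamma_{\ell,\ell-1,m}|=(q-1)^3q^{4\ell-m+4}$ matches the paper's value.
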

\begin{proof}Using Lemma \ref{lem:4.2}, Lemma \ref{lem:4.3} and Lemma \ref{lem:4.4}, we have
\begin{equation}\nonumber
\begin{split}
&|\Gamma_{\ell,\ell,m}\cap\Gamma_{\ell-1,\ell-1,m-1}|=(q-1)^3(q+1)q^{4\ell-m+3}\\
&|\Gamma_{\ell,\ell,m}\cap\Gamma_{\ell-1,\ell-1,m}|=(q-1)^3(q+1)q^{4\ell-m+2}\\
&|\Gamma_{\ell,\ell,m}\cap\Gamma_{\ell,\ell-1,m-1}|=(q-1)^3q^{4\ell-m+4}\\
&|\Gamma_{\ell,\ell,m}\cap\Gamma_{\ell,\ell-1,m}|=(q-1)^3q^{4\ell-m+4}\\
&|\Gamma_{\ell,\ell,m}\cap\Gamma_{\ell,\ell,m+1}|=(q-1)^3(q+1)q^{4\ell-m+4}\\
&|\Gamma_{\ell,\ell,m}\cap\Gamma_{\ell+1,\ell,m}|=(q-1)^3q^{4\ell-m+6}\\
&|\Gamma_{\ell,\ell,m}\cap\Gamma_{\ell+1,\ell,m+1}|=(q-1)^3q^{4\ell-m+5}.
\end{split}
\end{equation}
Using this, we have \eqref{eq:4.10}.
\end{proof}
For any $\ell>m>n>0$, we have the following lemma.
\begin{lem}\label{lem:4.11}For any $f\in L^2_w(X)$ and $\ell>m>n>0$,
\begin{equation*}
\begin{split}
A_{w,1}f(v_{\ell,m,n})=&\,q^3f(v_{\ell-1,m-1,n-1})+q^2f(v_{\ell,m,n+1})+qf(v_{\ell,m+1,n})+f(v_{\ell+1,m,n})\\
A_{w,2}f(v_{\ell,m,n})=&\,q^4f(v_{\ell-1,m-1,n})+q^3f(v_{\ell-1,m,n-1})+q^2f(v_{\ell,m-1,n-1})\\
&+q^2f(v_{\ell,m+1,n+1})+qf(v_{\ell+1,m,n+1})+f(v_{\ell+1,m+1,n})\\
A_{w,3}f(v_{\ell,m,n})=&\,q^3f(v_{\ell-1,m,n})+q^2f(v_{\ell,m-1,n})+qf(v_{\ell,m,n-1})+f(v_{\ell+1,m+1,n+1}).
\end{split}
\end{equation*}
\end{lem}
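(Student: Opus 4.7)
The plan is to follow the same template as in the preceding lemmas of Section~\ref{sec:4}: for each of the fourteen neighbors $v_{\ell',m',n'}$ of $v_{\ell,m,n}$ listed in the last row of Table~\ref{t1}, compute the intersection cardinality $|\Gamma_{\ell,m,n}\cap\Gamma_{\ell',m',n'}|$, form the weight ratio
\[
\frac{w(v_{\ell,m,n},v_{\ell',m',n'})}{w(v_{\ell,m,n})}=\frac{|\Gamma_{\ell,m,n}|}{|\Gamma_{\ell,m,n}\cap\Gamma_{\ell',m',n'}|},
\]
and substitute into the definition of $A_{w,i}$.

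The intersections are read off from the explicit matrix description used in the proof of Lemma~\ref{lem:4.4}. Recall that, modulo scalars, $\Gamma_{\ell,m,n}$ consists of upper triangular matrices whose off-diagonal entries $a_{ij}$ lie in the polynomial spaces $P^{\ell-m}(t)$, $P^{\ell-n}(t)$, $P^{\ell}(t)$, $P^{m-n}(t)$, $P^{m}(t)$, $P^{n}(t)$, with diagonal entries in $\mathbb{F}_q^\times$. For a neighbor $v_{\ell',m',n'}$ with $\ell'>m'>n'>0$, the stabilizer $\Gamma_{\ell',m',n'}$ imposes the analogous set of degree bounds, and the intersection is obtained by restricting each $a_{ij}$ to the smaller of the two polynomial spaces. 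Since $|P^k(t)|=q^{k+1}$ for $k\ge 0$ and the diagonal contribution after quotienting by scalars is always $(q-1)^3$, I expect to obtain
\[
|\Gamma_{\ell,m,n}\cap\Gamma_{\ell',m',n'}|=(q-1)^3q^{3\ell+m-n+6-d_{\ell',m',n'}},
\]
where $d_{\ell',m',n'}$ is precisely the exponent of $q$ appearing in the coefficient of $f(v_{\ell',m',n'})$ stated in the lemma. Dividing $|\Gamma_{\ell,m,n}|=(q-1)^3q^{3\ell+m-n+6}$ from Lemma~\ref{lem:4.4} by this quantity yields the required coefficient $q^{d_{\ell',m',n'}}$.

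As a representative sample, for $v_{\ell-1,m-1,n-1}$ the intersection imposes $a_{12}\in P^{\ell-m}(t)$, $a_{13}\in P^{\ell-n}(t)$, $a_{14}\in P^{\ell-1}(t)$, $a_{23}\in P^{m-n}(t)$, $a_{24}\in P^{m-1}(t)$, $a_{34}\in P^{n-1}(t)$, giving $(q-1)^3q^{3\ell+m-n+3}$ and hence the leading $A_{w,1}$-coefficient $q^3$. The remaining thirteen neighbors are handled by identical enumerations, yielding coefficients $(q^3,q^2,q,1)$ for $A_{w,1}$; $(q^4,q^3,q^2,q^2,q,1)$ for the six $A_{w,2}$-neighbors; and $(q^3,q^2,q,1)$ for $A_{w,3}$, exactly as stated.

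The only genuine obstacle is bookkeeping: one must carefully identify, for each of the fourteen neighbors and each entry $a_{ij}$, which of the two polynomial degree bounds is tighter. The strict inequalities $\ell>m>n>0$ rule out all degenerate collisions that would enlarge the stabilizers or introduce the $(q+1)$ factors that appear in the boundary cases of the preceding lemmas, so this generic case is in fact the cleanest one in Section~\ref{sec:4}.
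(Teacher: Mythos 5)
Your proposal is correct and follows essentially the same route as the paper's own proof: compute $|\Gamma_{\ell,m,n}\cap\Gamma_{\ell',m',n'}|$ from the explicit upper-triangular description of Lemma~\ref{lem:4.4} by restricting each off-diagonal polynomial entry to the smaller of the two degree bounds, then divide into $|\Gamma_{\ell,m,n}|$ to get the entering-degree coefficients. Your remark that the strict inequalities $\ell>m>n>0$ preclude any $(q+1)$ factors is in fact sharper than the paper's displayed list, in which three of the seven intersection cardinalities carry an extraneous $(q+1)$ (e.g.\ $|\Gamma_{\ell,m,n}\cap\Gamma_{\ell-1,m-1,n-1}|$ should be $(q-1)^3q^{3\ell+m-n+3}$, not $(q-1)^3(q+1)q^{3\ell+m-n+3}$, since otherwise the resulting coefficient would be $q^3/(q+1)$ rather than the stated $q^3$); these are evidently typographical carry-overs from the boundary lemmas where genuine $(q+1)$ factors do occur.
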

\begin{proof}Using Lemma \ref{lem:4.4}, we have
\begin{equation}\nonumber
\begin{split}
&|\Gamma_{\ell,m,n}\cap\Gamma_{\ell-1,\ell-1,m-1}|=(q-1)^3(q+1)q^{3\ell+m-n+3}\\
&|\Gamma_{\ell,m,n}\cap\Gamma_{\ell-1,m-1,n}|=(q-1)^3(q+1)q^{3\ell+m-n+2}\\
&|\Gamma_{\ell,m,n}\cap\Gamma_{\ell-1,m,n-1}|=(q-1)^3q^{3\ell+m-n+3}\\
&|\Gamma_{\ell,m,n}\cap\Gamma_{\ell,m-1,n-1}|=(q-1)^3q^{3\ell+m-n+4}\\
&|\Gamma_{\ell,m,n}\cap\Gamma_{\ell,m,n+1}|=(q-1)^3(q+1)q^{3\ell+m-n+4}\\
&|\Gamma_{\ell,m,n}\cap\Gamma_{\ell,m+1,n}|=(q-1)^3q^{3\ell+m-n+5}\\
&|\Gamma_{\ell,m,n}\cap\Gamma_{\ell+1,m,n}|=(q-1)^3q^{3\ell+m-n+6}.
\end{split}
\end{equation}
which yields the Lemma.
\end{proof}


\section{Simultaneous eigenfunctions of the weighted adjacency operators}\label{sec:5}
In this section, we summarize the results about simultaneous eigenfunctions of the weighted adjacency operators $A_{w,1}, A_{w,2}, A_{w,3}$ in the space of arbitrary functions on $\Gamma\backslash\mathcal{B}_4^{(0)}$. Detailed calculation will be presented in Appendix~\ref{sec:appendix}. Among the simultaneous eigenvalues $(\lambda_1,\lambda_2,\lambda_3)$, the only tuples for which the corresponding eigenfunction $f$ satisfies Condition $(B)$ can be contained in the automorphic spectrum (see Section~\ref{sec:6}). 

 By Lemma \ref{lem:4.6}, the eigenfunction $f$ of the simultaneous eigenvalue $\lambda_i$ of $A_{w,i}$ satisfies 
\begin{equation}\nonumber
\begin{split}
&\lambda_1f(v_{\ell+1,0,0})=(q^3+q^2+q)f(v_{\ell+1,1,0})+f(v_{\ell+2,0,0})\\
&\lambda_2f(v_{\ell,0,0})=(q^4+q^3+q^2)f(v_{\ell,1,1})+(q^2+q+1)f(v_{\ell+1,1,0})\\
&\lambda_3f(v_{\ell-1,0,0})=q^3f(v_{\ell-2,0,0})+(q^2+q+1)f(v_{\ell,1,1}).
\end{split}
\end{equation}
Let $a_{\ell,m,n}=f(v_{\ell,m,n})/q^{\ell+m+n}.$ Using the above equations, we have
\begin{equation}\nonumber
\begin{split}
&{q}^{-1}\lambda_1a_{\ell+1,0,0}=(q^3+q^2+q)a_{\ell+1,1,0}+a_{\ell+2,0,0}\\
&{q}^{-1}\lambda_2a_{\ell,0,0}=(q^5+q^4+q^3)a_{\ell,1,1}+(q^3+q^2+q)a_{\ell+1,1,0}\\
&\lambda_3 a_{\ell-1,0,0}=q^2a_{\ell-2,0,0}+(q^5+q^4+q^3)a_{\ell,1,1}.
\end{split}
\end{equation}
This implies that 
\begin{equation}\label{char poly}
a_{\ell+2,0,0}-{q}^{-1}\lambda_1a_{\ell+1,0,0}+q^{-1}\lambda_2a_{\ell,0,0}-\lambda_3a_{\ell-1,0,0}+q^2a_{\ell-2,0,0}=0.
\end{equation}
Let $\textbf{z}=(z_1,z_2,z_3,z_4)$ be the solution of the characteristic equation \eqref{char poly}. The 4-tuple $\textbf{z}$ satisfies
\begin{equation}\label{eq:para}
\lambda_1=q\sqrt{q}\sigma_1(\textbf{z}),\quad \lambda_2=q^2\sigma_2(\textbf{z}),\quad \lambda_3=q\sqrt{q}\sigma_3(\textbf{z})\quad \sigma_4(\textbf{z})=1,
\end{equation}
where $$\sigma_k(z_1,\ldots,z_d)=\sum_{1\le i_1<\cdots<i_k\le d} z_{i_1}z_{k_2}\cdots z_{i_k}$$ is the $k$-th elementary symmetric polynomial. Since $A_{w,i}^*=A_{w,4-i}$ for all $i=1,2,3$, every automorphic simultaneous eigenvalue $(\lambda_1,\lambda_2,\lambda_3)$ is parametrized by $\textbf{z}$ in the set
$$S=\{\textbf{z}=(z_1,z_2,z_3,z_4)\in \mathbb{C}^4:\overline{\sigma_1(\textbf{z})}=\sigma_3(\textbf{z}),\sigma_2(\textbf{z})\in \mathbb{R},z_1z_2z_3z_4=1\}.$$
and the eigenfunction $f=f_\textbf{z}$ is also indexed by $\textbf{z}.$

We assume $z_i\neq z_j$ for any $i,j$ in this section.
From this, we obtain the general formula of $a_{\ell,0,0}$.   
  Similarly, $a_{\ell,m,0}$ satisfies the following equation
  $$ a_{\ell+2,m+1,0}+qa_{\ell+1,m+2,0}-\frac{\lambda_1}{q}a_{\ell+1,m+1,0}+\frac{\lambda_3}{q}a_{\ell,m,0}-qa_{\ell-1,m,0}-a_{\ell,m-1,0}=0,$$ 
which again yields the formula of $a_{\ell,m,0}$ for any $\ell\geq m> 0$.

Similarly, for any $\ell\geq m\geq n>0$, we have 
$$\lambda_1a_{\ell,m,n}=a_{\ell-1,m-1,n-1}+q^3a_{\ell,m,n+1}+q^2a_{\ell,m+1,n}+qa_{\ell+1,m,n}.$$ 
This enables us to find $a_{\ell,m,n}$ and hence the simultaneous eigenfunction $f$. It requires Lemmata in Section \ref{sec:4} and complicated calculations to obtain the above equations. For more detail, we refer to see Appendix~\ref{sec:appendix}. 

To express the eigenfunctions, we introduce constants $C_{ijk}$ as follows. Let $$D=(q+1)(q^2+q+1)(q^3+q^2+q+1)$$ and $$C_{ijk}=\frac{(z_i-qz_k)(z_i-qz_j)(z_i-qz_\ell)(z_j-qz_k)(z_j-qz_\ell)(z_k-qz_\ell)}{D(z_i-z_k)(z_i-z_j)(z_i-z_\ell)(z_j-z_k)(z_j-z_\ell)(z_k-z_\ell)}.$$
For example,
$$C_{123}=\frac{(z_1-qz_2)(z_1-qz_3)(z_1-qz_4)(z_2-qz_3)(z_2-qz_4)(z_3-qz_4)}{D(z_1-z_2)(z_1-z_3)(z_1-z_4)(z_2-z_3)(z_2-z_4)(z_3-z_4)}.$$
The main result of this section is: 

\begin{thm}\label{thm:eigenftn}
 Let $(\lambda_1,\lambda_2,\lambda_3)$ be an element of $\mathbb{C}^3$ indexed by the equation~(\ref{eq:para}) with respect to $(z_1,z_2,z_3,z_4)\in S$. If $f=f_{\textbf{z}}$ is the corresponding simultaneous eigenfunction satisfying $A_{w,i}f=\lambda_if$, then  
$$f_{\textbf{z}}(v_{\ell,m,n})=\sum_{\tau\in S_4}C_{\tau(1)\tau(2)\tau(3)}\sqrt{q}^{3\ell+m-n}z_{\tau(1)}^\ell z_{\tau(2)}^m z_{\tau(3)}^n.$$
\end{thm}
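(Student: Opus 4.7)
The plan is to derive, from the weighted-adjacency identities of Section \ref{sec:4}, a nested family of linear recurrences for $a_{\ell,m,n}:=f(v_{\ell,m,n})/q^{\ell+m+n}$, and to solve them in three stages corresponding to the three strata of the fundamental domain: the edge $m=n=0$, the face $n=0$, and the interior $\ell\geq m\geq n\geq 1$. The ansatz at each stage is a separated sum $\sum c\,z_{i_1}^\ell z_{i_2}^m z_{i_3}^n$ over distinct indices, motivated by the fact that the characteristic polynomial of each one-dimensional recurrence, after substituting $r=\sqrt{q}\,y$, reduces via the parametrization \eqref{eq:para} to $\prod_{i=1}^{4}(y-z_i)=0$. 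Genericity of the $z_i$ then ensures linear independence of the exponential modes and will be used throughout; the degenerate cases with $z_i=z_j$ are postponed to the appendix.

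For the edge $m=n=0$, the recurrence \eqref{char poly} has already been derived in the excerpt, and its roots yield $f(v_{\ell,0,0})=\sqrt{q}^{\,3\ell}\sum_i c_i z_i^{\ell}$. For the face $n=0$, eliminating $f(v_{\ell,m,1})$ and $f(v_{\ell,m+1,1})$ from the three identities of Lemma \ref{lem:4.9} produces the bilinear recurrence quoted just after \eqref{char poly}; its separated solutions span the space of $z_i^\ell z_j^m$ with $i\neq j$. For the interior, the three relations of Lemma \ref{lem:4.11} combine to force the trilinear ansatz $\sum c_{ijk} z_i^{\ell}z_j^{m}z_k^{n}$ over distinct triples. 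The asymmetric factor $\sqrt{q}^{\,3\ell+m-n}$ is then read off by tracking the $q$-exponents on the right-hand sides of Lemmas \ref{lem:4.6}--\ref{lem:4.11}, which reflect the color orientation of the building. With the shape of $f$ fixed, the remaining task is to identify the coefficients $c_{ijk}$. The interior formula must glue consistently to the three boundary strata $m=0$, $\ell=m$, $m=n$ (the three codimension-one walls of the chamber), each of which contributes a linear compatibility equation among permutations of $c_{ijk}$. Together with the overall normalization coming from Lemma \ref{lem:4.5}, these equations determine $c_{ijk}$ uniquely, and the claim is that the solution is precisely $C_{\tau(1)\tau(2)\tau(3)}$ with denominator $D=(q+1)(q^2+q+1)(q^3+q^2+q+1)$.

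The main obstacle is recognizing the compact closed form of $C_{ijk}$. Writing down the boundary matching equations is routine, but the resulting expressions are rational functions of four complex variables, and one needs the right identity---essentially a refined Cauchy/Vandermonde identity in $z_1,\ldots,z_4$---to see that the numerator factorizes as $\prod_{a<b}(z_a-qz_b)$ (the factors $(z_a-qz_b)$ arising naturally when the weight coefficients $q^k$ of Lemmas \ref{lem:4.6}--\ref{lem:4.11} are converted into separated-variable form) and that the denominator is $D$ times the Vandermonde $\prod_{a<b}(z_a-z_b)$. The cleanest route, which I would follow in the write-up, is to verify the closed form by direct substitution into the system of Section \ref{sec:4}: the $S_4$-symmetry of the ansatz reduces the check to a single polynomial identity in $(z_1,z_2,z_3,z_4)$, which then follows from degree and symmetry considerations together with evaluation at a few test specializations (for instance, setting one $z_i=qz_j$ collapses the corresponding terms and produces the recurrence at a boundary face).
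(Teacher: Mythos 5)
Your outline captures the skeleton of the paper's argument correctly: you identify the three nested recurrences (edge $m=n=0$, face $n=0$, interior), the separated exponential ansatz motivated by the characteristic polynomial $\prod_i(y-z_i)$, and the $\sqrt{q}^{3\ell+m-n}$ factor produced by the weighted-adjacency coefficients. Those parts match the appendix's Theorem~\ref{thm:eigenftn(a)} closely.

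The genuine gap is in the coefficient determination, which you yourself flag as ``the main obstacle'' and then leave to an undeveloped mixture of boundary gluing, degree/symmetry considerations, and test specializations. The paper does something more explicit and mechanical. It computes $a_{1,0,0}, a_{2,0,0}, a_{3,0,0}$ directly from Lemmas~\ref{lem:4.5}--\ref{lem:4.6} and inverts a $4\times 4$ Vandermonde system in the $z_i$ to get the top-level coefficients $A_{i,0,0}=\frac{(z_i-qz_j)(z_i-qz_k)(z_i-qz_l)}{(z_i-z_j)(z_i-z_k)(z_i-z_l)(q^3+q^2+q+1)}$ in closed form. It then feeds $A_{i,1,0}$ and $A_{i,2,0}$ (computed from the face recurrence plus Lemma~\ref{lem:4.9}) into a $3\times 3$ Vandermonde inversion to get $B_{i,j,0}$, and finally $B_{i,j,0},B_{i,j,1}$ into a $2\times 2$ inversion to get $C_{ijk}=\frac{z_k-qz_l}{(z_k-z_l)(q+1)}B_{i,j,0}$. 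The factorization $\prod(z_a-qz_b)$ and the denominator $D=(q+1)(q^2+q+1)(q^3+q^2+q+1)$ thus fall out by telescoping these three ratios rather than from a Cauchy/Vandermonde identity imposed at the end. Your alternative plan---fixing the shape and then verifying by substitution plus ``degree and symmetry considerations together with evaluation at a few test specializations''---is not wrong in principle, but as written it is not a proof: the system of Section~\ref{sec:4} has eight distinct boundary types (see Table~\ref{t1}), so $S_4$-symmetry does not reduce the check to a single identity, and degree bounds alone do not pin down a rational function in four variables without a count of conditions. You would need to carry out essentially the same sequential Vandermonde computation to make the verification rigorous, at which point the ``boundary gluing'' framing adds nothing over the paper's direct construction.

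Also a small correction: the numerator of $C_{ijk}$ is not the symmetric product $\prod_{a<b}(z_a-qz_b)$ for all $\tau$; it is the ordered product attached to the permutation $\tau$ (only $C_{123}$ recovers the $a<b$ ordering), so the factorization claim in your proposal needs to be stated permutation-by-permutation.
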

Rigorous proof of the above theorem will be given in Appendix~\ref{sec:appendix}. Appendix \ref{sec:appendix} also deals with the case when the characteristic equation \eqref{char poly} has repeated roots.


\section{Automorphic spectrum of the weighted adjacency opartors}\label{sec:6}
In this section, we investigate the automorphic spectrum of the weighted adjacency operators $A_{w,1}, A_{w,2}, A_{w,3}$ on $L^2_w(\Gamma\backslash\mathcal{B}_4)$. We will prove that there are no discrete spectrum except trivial eigenvalues, and present all the simultaneous automorphic spectrum of $A_{w,i}$ for $i=1,2,3$.

We recall that the set $\mathcal{S}^4$ is the subset of $\mathbb{C}^3$ containing $(\lambda_1,\lambda_2,\lambda_3)$ given by \eqref{eq:para} for $|z_1|=|z_2|=|z_3|=|z_4|=z_1z_2z_3z_4=1$. 

Meanwhile, $(\lambda_1,\lambda_2,\lambda_3)$ is in the automorphic simultaneous spectrum of $A_{w,1},A_{w,2},A_{w,3}$ on $L^2_w(\Gamma\backslash\mathcal{B}_4)$ if and only if there exists a sequence $\{f_n\}$ of $L^2_w(\Gamma\backslash\mathcal{B}_4)$ with $\|f_n\|_{2,w}=1$ such that for any $i=1,2,3$,
   $$\lim_{n\rightarrow \infty} \|A_{w,i}f_n-\lambda_if_n\|_{2,w}=0.$$
\subsection{From the condition $(A)$ to the condition $(B)$}\label{sec:6.1} In this section, we show that the eigenfunction $f_{\textbf{z}}$ indexed by a 4-tuple $\textbf{z}$ of the form $(z_1,z_2,z_3,z_3)$ satisfies the condition $(B)$ whenever the condition $(A)$ holds. 
Since the condition $(B)$ holds when $|z_i|=1$ for any $i$ and $z_i\neq z_j$ for any $i,j$, it is enough to consider the case when $|z_i|>1$ for some $i$.

The following lemma reduces the number of possibilities that we need to consider the cases when $|z_i|>1$ for some $i$ and $z_j=z_k$ for some $j,k$.

\begin{lem}\label{lem:6.6} If a point $\textbf{z}=(z_1,z_2,z_3,z_4)\in S$ satisfies $|z_1|=|z_2|=|z_3|$, then $|z_i|=1$ for any $i$.
\end{lem}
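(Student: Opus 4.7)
The plan is to distill the defining relations of $S$ into a single identity which, combined with the triangle inequality for $s_2 := z_1z_2 + z_1z_3 + z_2z_3$, forces $r := |z_1| = |z_2| = |z_3|$ to equal $1$.

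First, from $z_1z_2z_3z_4 = 1$ I immediately get $|z_4| = r^{-3}$, and setting $s_3 := z_1z_2z_3$ gives $z_4^{-1} = s_3$. Second, $\sigma_4(\mathbf{z}) = 1$ produces the clean identity
\[
\sigma_3(\mathbf{z}) \;=\; z_1z_2z_3z_4\bigl(z_1^{-1} + z_2^{-1} + z_3^{-1} + z_4^{-1}\bigr) \;=\; \sum_{i=1}^4 z_i^{-1}.
\]
Consequently the condition $\overline{\sigma_1(\mathbf{z})} = \sigma_3(\mathbf{z})$ is equivalent to $\sum_i (\overline{z_i} - z_i^{-1}) = 0$, which since $\overline{z_i} - z_i^{-1} = (|z_i|^2 - 1)/z_i$ collapses to
\[
\sum_{i=1}^4 \frac{|z_i|^2 - 1}{z_i} \;=\; 0.
\]

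Next I will plug in the hypothesis $|z_1| = |z_2| = |z_3| = r$ and $|z_4| = r^{-3}$, together with $z_4^{-1} = s_3$ and $z_1^{-1} + z_2^{-1} + z_3^{-1} = s_2/s_3$. The displayed identity becomes
\[
(r^2 - 1)\,\frac{s_2}{s_3} \;+\; (r^{-6} - 1)\,s_3 \;=\; 0.
\]
Assuming for contradiction that $r \neq 1$, I can divide through by $r^2 - 1$ and use the factorization $r^6 - 1 = (r^2 - 1)(r^4 + r^2 + 1)$ to deduce
\[
s_2 \;=\; \frac{r^4 + r^2 + 1}{r^6}\, s_3^{\,2}.
\]
Since $|s_3| = r^3$, taking moduli gives $|s_2| = r^4 + r^2 + 1$. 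On the other hand the triangle inequality applied to $s_2 = z_1z_2 + z_1z_3 + z_2z_3$ forces $|s_2| \leq 3r^2$, so $r^4 + r^2 + 1 \leq 3r^2$, i.e.\ $(r^2 - 1)^2 \leq 0$, contradicting $r \neq 1$.

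The main obstacle is algebraic rather than computational: the decisive step is recognising that once $\sigma_4 = 1$ is used, the condition $\overline{\sigma_1} = \sigma_3$ collapses to the elegant identity $\sum_i (|z_i|^2 - 1)/z_i = 0$. After that only a short inequality argument remains, and I expect the third defining condition $\sigma_2(\mathbf{z}) \in \mathbb{R}$ to play no role in this particular lemma.
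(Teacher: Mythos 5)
Your proof is correct and takes essentially the same route as the paper's: both rely only on $\sigma_4(\mathbf{z})=1$ and $\overline{\sigma_1(\mathbf{z})}=\sigma_3(\mathbf{z})$, reduce them to a single identity, and then pit a triangle-inequality bound ($|s_2|\le 3r^2$ in your notation, $|e^{-i\theta_1}+e^{-i\theta_2}+e^{-i\theta_3}|\le 3$ in the paper's polar-coordinate version) against the opposite bound $r^4+r^2+1\ge 3r^2$ to force $r=1$. The only difference is cosmetic: the paper works in polar coordinates $z_j=ae^{i\theta_j}$, while you package the same computation through the elementary symmetric functions $s_2,s_3$ and the tidy identity $\sum_i (|z_i|^2-1)/z_i=0$.
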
 
\begin{proof}Let $z_i=ae^{i\theta_i}$ for $i=1,2,3$ and $z_4=\frac{1}{a^3}e^{-i(\theta_1+\theta_2+\theta_3)}$. Since $\overline{\sigma_1(\textbf{z})}=\sigma_3(\textbf{z})$,
$$\biggl(a-\frac{1}{a}\biggr)\biggl|e^{-i\theta_1}+e^{-i\theta_2}+e^{-i\theta}\biggr|=\biggl(a^3-\frac{1}{a^3}\biggr).$$
This holds only if $a=1$ and $e^{-i\theta_1}=e^{-i\theta_2}=e^{-i\theta_3}$. This completes the proof.
\end{proof}
 By Lemma \ref{lem:6.6}, to prove Lemma \ref{lem:2-d}, it is enough to consider either the cases when $|z_1|>|z_2|$ or when $|z_1|=|z_2|$.
\begin{lem}\label{lem:2-d}
Let $\textbf{z}=(z_1,z_2,z_3,z_4)$ be a point in $S$ with $z_i\neq z_j$ for any $i,j$, $|z_1|>1$ and $|z_1|\geq |z_2|\geq|z_3|\geq|z_4|$. If $(\lambda_1,\lambda_2,\lambda_3)$ satisfies the condition $(A)$, then the condition $(B)$ holds.
\end{lem}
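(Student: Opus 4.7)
The plan is a contradiction argument based on Weyl's criterion. Condition (A) gives a sequence $\{f_n\}\subset L^2_w(\Gamma\backslash\mathcal{B}_4)$ with $\|f_n\|_{2,w}=1$ and $\|A_{w,i}f_n-\lambda_if_n\|_{2,w}\to 0$ for $i=1,2,3$. By Theorem \ref{thm:eigenftn}, the simultaneous eigenfunction $f=f_{\textbf{z}}$ restricts on the three boundary rays to an exponential combination in the roots $\sqrt{q}\,z_i$; in particular on the first ray
$$f(v_{\ell,0,0})=\sqrt{q}^{\,3\ell}\sum_{i=1}^4\alpha_i z_i^\ell,\qquad \alpha_i=\sum_{\tau\in S_4,\,\tau(1)=i}C_{i\tau(2)\tau(3)}.$$
Failure of (B) would force some coefficient $\alpha_{i_0}$ with $|z_{i_0}|>1$ to be nonzero, and by the hypothesis $|z_1|\ge|z_2|\ge|z_3|\ge|z_4|$ we may take $i_0=1$.

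The heart of the argument is to reduce the spectral hypothesis to a one-dimensional growth estimate on the ray. Combining the three approximate eigenvalue equations via Lemmata \ref{lem:4.5}--\ref{lem:4.11} and eliminating values at off-ray vertices in exactly the manner used to derive \eqref{char poly}, I obtain a homogeneous four-term recursion on $F_n(\ell):=f_n(v_{\ell,0,0})$ whose characteristic polynomial factors as $\prod_i(X-\sqrt{q}\,z_i)$, up to inhomogeneous error terms whose $L^2_w$-norm is bounded by $\sum_i\|A_{w,i}f_n-\lambda_if_n\|_{2,w}$. A Vandermonde inversion on four consecutive values $F_n(\ell),\ldots,F_n(\ell+3)$ expresses the coefficient $\beta_i^{(n)}$ of $(\sqrt{q}\,z_i)^\ell$ as an explicit linear combination of those values. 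Invoking the weight asymptotic $w(v_{\ell,0,0})\asymp q^{-3\ell}$ from Lemma \ref{lem:4.2} together with the normalization gives
$$\sum_{\ell\ge 0}|F_n(\ell)|^2\, q^{-3\ell}\;\lesssim\;\|f_n\|_{2,w}^2=1.$$
If $|\beta_1^{(n)}|$ did not vanish along a subsequence, the dominant component would contribute terms of order $|\beta_1^{(n)}|^2|z_1|^{2\ell}$ to this sum and force divergence because $|z_1|>1$. Hence $\beta_i^{(n)}\to 0$ for every $i$ with $|z_i|>1$, and after passing to a pointwise limit in local coordinates this translates to $\alpha_i=0$ for those indices, yielding $f(v_{\ell,0,0})=O(q^{(3+c)\ell/2})$ for every $c>0$.

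The main obstacle is the interior-to-boundary coupling: the exact recurrence on a boundary ray involves $f_n$ at adjacent off-ray vertices, so the clean four-term recursion on $F_n$ only appears after systematic elimination using all three operator equations, which introduces error terms that must be bounded in $L^2_w$ norm and shown to be negligible against the growth $|z_1|^\ell$ over a window $0\le\ell\le L(n)$ chosen to grow with $n$. Once this is settled on the first ray, the same mechanism applied on $\{v_{\ell,\ell,0}\}$ (weight $q^{-4\ell}$ from Lemma \ref{lem:4.2}) and on $\{v_{\ell,\ell,\ell}\}$ (weight $q^{-3\ell}$) produces the remaining two estimates of condition (B).
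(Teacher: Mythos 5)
Your high-level strategy (Weyl's criterion, a growth contradiction on the boundary rays, reduction to a scalar recursion whose characteristic polynomial is $\prod_i(X-\sqrt{q}\,z_i)$, and weight asymptotics $w(v_{\ell,0,0})\asymp q^{-3\ell}$) coincides with what the paper does, and the machinery you allude to --- solving the inhomogeneous recursion on $F_n(\ell)$ and controlling the accumulated error --- is precisely what the paper carries out in Appendix~\ref{ap:b} to prove Lemma~\ref{lem:6.77}. So the skeleton is sound, but the proposal skips the part of the argument that is actually difficult, and one of the explicit steps is wrong.

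The concrete gap is your claim that ``failure of (B) would force some $\alpha_{i_0}\neq 0$ with $|z_{i_0}|>1$, and we may take $i_0=1$.'' This is false in exactly the cases that matter. The coefficients $C_{ijk}$ (hence the $\alpha_i$) contain factors $(z_i-qz_j)$, and the putative spectral points one is trying to rule in or rule out are precisely those with algebraic relations $z_i=qz_j$. Whenever $z_1=qz_j$ for some $j$, every $C_{1**}$ vanishes identically, so $\alpha_1=0$ while $|z_1|$ may still be $>1$; the dominant surviving coefficient may correspond to $z_2$, or vanish too. The paper therefore cannot rely on a single ``leading coefficient is nonzero'' contradiction: instead it performs a case analysis over all patterns of relations $z_i=qz_j$, and in each case either (i) exhibits some $C_i\neq 0$ on one of the three rays and invokes Lemma~\ref{lem:6.77}, or (ii) shows that all surviving terms already satisfy $|z_i^\ell z_j^m z_k^n|\le 1$ so that condition (B) holds directly, or (iii) (when $z_i=qz_4$) descends to the next-to-leading order using the explicit inhomogeneous solution \eqref{eq:b.3} or \eqref{eq:b..4}. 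Your proposal has no mechanism for (ii) and (iii), and without them the argument does not close: it only handles the situation where the top coefficient on the top ray survives, which is the easy case.

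A second, smaller issue: the ``pass to a pointwise limit in local coordinates'' step is unjustified. The approximating sequence $\{f_n\}$ need not converge pointwise, and the Vandermonde-extracted $\beta_i^{(n)}(\ell)$ depend on $\ell$ because $F_n$ solves the recursion only up to the forcing terms $\delta_i$. The paper sidesteps this by fixing a single $h$ for each $\epsilon$, normalizing $f$ by $f(v_{0,0,0})=h(v_{0,0,0})$, and directly bounding $h-f$ via the explicit solution of the inhomogeneous recurrence; the contradiction is then that $\|h\|_{2,w}=1$ is incompatible with the derived lower bound on $|h(v_{\ell,\cdot,\cdot})|$, with no limiting step needed. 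If you want to keep your Vandermonde formulation you would still have to prove a quantitative version of Lemma~\ref{lem:6.77}, which is the actual technical core here.
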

\begin{proof}

 By assumption, for any $\epsilon>0$, there exists a function $h\in L^2(\Gamma\backslash\mathcal{B}_4)$ with $\|h\|_{2,w}=1$ such that for any $i\in\{1,2,3\}$, the function 
$$\delta_i(v_{\ell,m,n}):=A_{w,i}h(v_{\ell,m,n})-\lambda_ih(v_{\ell,m,n})$$ satisfies $\|\delta_i\|_{2,w}<\epsilon$. Let $f$ be a simultaneous eigenfunction corresponding to $(\lambda_1,\lambda_2,\lambda_3)$ with $a:=f(v_{0,0,0})=h(v_{0,0,0}).$ Then we have the following lemma, of which proof will appear in Appendix \ref{ap:b}.
\begin{lem}\label{lem:6.77}
Let $\textbf{z}=(z_1,z_2,z_3,z_4)$ be a point in $S$ with $|z_1|\geq |z_2|\geq|z_3|\geq|z_4|$ and $|z_1|>1$. Let $C_1$, $C_2$, and $C_3$ be the sum of the coefficients of terms with the same absolute value as $|z_1|^\ell$, $|z_1z_2|^\ell$ and $|z_1z_2z_3|^\ell$ in $f(v_{\ell,0,0}),f(v_{\ell,\ell,0})$ and $f(v_{\ell,\ell,\ell})$, respectively. Let $m(z_1)$ be the multiplicity of $\sqrt{q}z_1$ in the equation \eqref{char poly}. For sufficiently small $\epsilon,c>0$, we have
\begin{equation}
\begin{split}
&\left|\frac{h(v_{\ell,0,0})}{q^{((3+c)\ell/2}}\right|\geq (|C_1|-C\epsilon \ell^{m(z_1)-1})\left|\frac{z_1}{q^{c/2}}\right|^\ell-o(|z_1|^\ell)\\
&\left|\frac{h(v_{\ell,\ell,0})}{q^{(2+c)\ell}}\right|\geq (|C_2|-C\epsilon)\left|\frac{z_1z_2}{q^c}\right|^\ell-o(|z_1z_2|^\ell)\\
&\left|\frac{h(v_{\ell,\ell,\ell})}{q^{(3\ell+c)/2}}\right|\geq (|C_3|-C\epsilon\ell^{m(z_1)-1})\left|\frac{z_1z_2z_3}{q^{c/2}}\right|^\ell-o(|z_1z_2z_3|^\ell)
\end{split}
\end{equation}
whenever $\|\delta_i\|_{2,w}<\epsilon$ and $A_i$ is nonzero.
\end{lem}
Let $C_{ijk}$ be the constant in Theorem \ref{thm:eigenftn}. If $z_i=qz_j$, then the constants $C_{i**}$, $C_{ki*}$ and $C_{kli}$ are zero when $k,l\neq j$. In this case, 
$\textbf{z}$ satisfies that the condition $(B)$ holds if and only if the absolute values of all remaining terms $z_i^\ell $, $z_i^\ell z_j^\ell $ and $z_i^\ell z_j^\ell z_k^\ell $ in $\frac{f(v_{\ell,0,0})}{q^{3\ell/2}}$, $\frac{f(v_{\ell,\ell,0})}{q^{2\ell}}$ and $\frac{f(v_{\ell,\ell,\ell})}{q^{3\ell/2}}$ are at most 1, respectively.

We confirm that either $C_i$ is nonzero for certain $i$, or that the condition $(B)$ holds in each case, except when $z_i=qz_4$ for some $i$.  Then Lemma \ref{lem:6.77} shows that if 
$C_i$ is nonzero for some $i$, either 
$\frac{h(v_{\ell,0,0})}{q^{3\ell/2}}$, $\frac{h(v_{\ell,\ell,0})}{q^{2\ell}}$ or $\frac{h(v_{\ell,\ell,\ell})}{q^{3\ell/2}}$ diverges as $\ell$ goes to infinity. 
This contradicts to the fact that $h$ is $L^2$. In the cases when $z_i=qz_4$ for some $i$, we will investigate whether either 
$\frac{h(v_{\ell,0,0})}{q^{3\ell/2}}$, $\frac{h(v_{\ell,\ell,0})}{q^{2\ell}}$ or $\frac{h(v_{\ell,\ell,\ell})}{q^{3\ell/2}}$ diverges, or if the condition $(B)$ holds.

\bigskip

\noindent\underline{Case 1. $|z_1|>|z_2|$:} Suppose that $z_1\neq qz_i$ for any $i$. By assumption, $|z_1|>1$ and the coefficient $C_1=A_{1,0,0}$ is nonzero, where $A_{1,0,0}$ is the constant in Appendix \ref{sec:appendix}.

Suppose that $z_1=qz_2$. 
\begin{itemize}
\item If $z_2=qz_3=q^2z_4$, then the condition $(B)$ holds since $C_{432}$ is the only nonzero constant and $|z_4|,|z_3z_4|,|z_2z_3z_4|<1$.
\item If $z_2=qz_3$ and $z_3\neq qz_4$, $C_{3}=aC_{321}$ is nonzero.
\item Suppose that $z_2=qz_4$. Then $C_{421}$ is nonzero. By \eqref{eq:b..4}, we have  
$$\left|\frac{h(v_{\ell,\ell,\ell})}{q^{3\ell/2}}\right|\geq \left| \sum_{i}^{\ell}\frac{d_{\ell-i}}{q^{\ell-i}}X_{124}\left(\frac{z_1z_2z_3}{z_1z_2z_4}\right)^i+aC_{421}\right||z_1z_2z_4|^\ell-o(|z_1z_2z_4|.^\ell).
$$
This shows that if $|z_2|>1$, then $\frac{h(v_{\ell,\ell,\ell})}{q^{3\ell/2}}$ diverges. Since $C_{342},C_{421},C_{423}$ and $C_{432}$ are nonzero and $z_2^3z_3=1$, the condition $(B)$ holds if $|z_2|=1$.
\item If $z_2\neq qz_3$ and $z_2\neq qz_4$, then $C_2=aC_{213}+aC_{214}$ is nonzero.
\end{itemize}

Suppose that $z_1=qz_3$ and $z_3\ne qz_4$. The coefficient $C_3=aC_{231}+aC_{312}+aC_{321}$ is nonzero. 

Suppose that $z_1=qz_3=q^2z_4$ or $z_1=qz_4$. Then the sum $C_{234}+C_{241}+C_{243}$ is nonzero. By \eqref{eq:b.3}, we have 
\begin{equation*}
\begin{split}
&\left|\frac{h(v_{\ell,0,0})}{q^{3\ell/2}}\right|\geq \left| \sum_{i}^{\ell}\frac{c_{\ell-i}}{q^{\ell-i}}X_1\left(\frac{z_1}{z_2}\right)^i+aC_{234}+aC_{241}+aC_{243}\right||z_2|^\ell-o(|z_2|^\ell).
\end{split}
\end{equation*}
This implies that $\frac{h(v_{\ell,0,0})}{q^{3\ell/2}}$ diverges when $|z_2|>1$. If $|z_2|=1$, then the condition $(B)$ holds since $C_{1ij}$, $C_{21i},$ $C_{231}$, $C_{31j}$ and $C_{321}$ are zero and the absolute values of the remaining terms are at most 1.
\bigskip

\noindent\underline{Case 2. $|z_1|=|z_2|$:} Suppose that $z_i\neq qz_j$ for any $i,j$, then $$C_2=aC_{123}+aC_{124}+aC_{213}+aC_{214}$$ is nonzero.

Suppose that $z_1=qz_3$.
\begin{itemize}
\item If $z_2=qz_4$, then $C_{ijk}$ are all zero except $C_{314}$, $C_{341}$, $C_{342}$, $C_{423}$, $C_{431}$ and $C_{432}$. The condtion $(B)$ holds since the absolute values of the remaining terms are at most 1.
\item If $z_3=qz_4$, $C_1=aC_{243}$ since $|z_1|=|z_2|$. 
\item If $z_2\neq qz_4$ and $z_3\neq qz_4$, $C_3=aC_{231}+aC_{312}+aC_{321}$ is nonzero.
\end{itemize}

Suppose that $z_1= qz_4$.
\begin{itemize}
\item If $z_2=qz_3$, then $C_{324}$, $C_{341}$, $C_{342}$, $C_{413}$, $C_{431}$ and $C_{432}$ are nonzero and the remaining coefficients are zero. The condtion $(B)$ holds since the absolute values of the remaining terms are at most 1.
\item If $z_2\neq qz_3$, $C_1=aC_{234}+aC_{241}+aC_{243}$ is nonzero.
\end{itemize}

Suppose that $z_1\neq qz_3$ and $z_1\neq qz_4$.
\begin{itemize}
\item If $z_2=qz_3$, then $C_1=aC_{132}+aC_{134}+aC_{143}$ is nonzero.
\item If $z_2=qz_4$, then $C_1=aC_{134}+aC_{142}+aC_{143}$ is nonzero.
\item If $z_2\ne qz_3$, $z_2\ne qz_4$ and $z_3=qz_4$, then $C_2=aC_{124}+aC_{214}$ is nonzero.
\end{itemize}
Thus we have Lemma \ref{lem:6.6}.
\end{proof}
 
By Lemma \ref{lem:6.6}, the following lemmata deal with the all possible cases in which the characteristic equation \eqref{char poly} has repeated roots and $|z_i|>1$ for some $i$. In the proof of lemmata, the polynomials $D_{ijk}$ and $F_{ijk}$ are from Proposition \ref{prop:a.2} and Proposition \ref{prop:a.3}, respectively.

\begin{lem}\label{lem:2-e}
Let $\textbf{z}=(z_1,z_2,z_3,z_4)$ be a point in $S$ with $z_1= z_2$. If $(\lambda_1,\lambda_2,\lambda_3)$ satisfies the condition $(A)$, then the condition $(B)$ holds. 
\end{lem}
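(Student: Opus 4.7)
The plan is to mirror the structure of the proof of Lemma~\ref{lem:2-d}, but adapted to the degenerate situation where the characteristic polynomial~\eqref{char poly} has a double root at $\sqrt{q}z_1 = \sqrt{q}z_2$. The first step is to invoke the modified formula for the simultaneous eigenfunction $f_{\textbf{z}}$ derived in Appendix~\ref{sec:appendix}, replacing the constants $C_{\tau(1)\tau(2)\tau(3)}$ of Theorem~\ref{thm:eigenftn} by the polynomials $D_{ijk}$ and $F_{ijk}$ introduced in Propositions~\ref{prop:a.2} and~\ref{prop:a.3}. Then $f_{\textbf{z}}(v_{\ell,m,n})$ is a sum of terms of the form $P_\tau(\ell,m,n)\,q^{(3\ell+m-n)/2} z_{\tau(1)}^\ell z_{\tau(2)}^m z_{\tau(3)}^n$ with $P_\tau$ polynomial of degree controlled by $m(z_1)-1$, exactly the degree that appears in the error factors $\ell^{m(z_1)-1}$ of Lemma~\ref{lem:6.77}.

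Next, I would reduce to the nontrivial range $|z_1|=|z_2|>1$, since the case $|z_1|=1$ forces $|z_i|=1$ for all $i$ by $z_1 z_2 z_3 z_4=1$ and Lemma~\ref{lem:6.6}, and then condition $(B)$ is automatic. Assume $|z_1|=|z_2|\geq |z_3|\geq |z_4|$ after reordering. Exactly as in Lemma~\ref{lem:2-d}, I would use Lemma~\ref{lem:6.77}: if the leading coefficient $C_1$, $C_2$, or $C_3$ of $f(v_{\ell,0,0})$, $f(v_{\ell,\ell,0})$, $f(v_{\ell,\ell,\ell})$ (after extracting the factors $q^{(3+c)\ell/2}$, $q^{(2+c)\ell}$, $q^{(3+c)\ell/2}$ respectively) is nonzero, then $h$ itself grows strictly faster than these bounds, contradicting $\|h\|_{2,w}=1$. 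Hence condition $(B)$ must hold whenever any of these leading coefficients is nonzero.

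The remaining work is a case-by-case check that, for every configuration $\textbf{z}\in S$ with $z_1=z_2$, either (i) one of $C_1,C_2,C_3$ is already nonzero, or (ii) the surviving polynomial terms $P_\tau(\ell,m,n) z_{\tau(1)}^\ell z_{\tau(2)}^m z_{\tau(3)}^n$ all satisfy $|z_{\tau(1)}^\ell z_{\tau(2)}^m z_{\tau(3)}^n|\leq q^{(3\ell+m-n)/2}$ after factoring out the dominant powers of $q$, so that $(B)$ holds directly. As in Lemma~\ref{lem:2-d}, the cases where some $D_{ijk}$ or $F_{ijk}$ vanishes correspond to resonances $z_i=qz_j$; using $\sigma_4(\textbf{z})=1$ and $z_1=z_2$, the feasible resonances are limited to a short list (essentially $z_1=qz_3$, $z_1=qz_4$, $z_3=qz_4$ together with the forced equation $z_1^2 z_3 z_4=1$), and in each of these sub-situations one either finds a nonzero leading term which kills condition $(A)$ by the divergence argument, or checks directly that the exponents $|z_i|$ remaining in the expansion satisfy the bounds required by $(B)$.

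The main obstacle will be the bookkeeping in step three: the polynomial factor $\ell^{m(z_1)-1}$ forces one to use the sharper form of Lemma~\ref{lem:6.77} rather than the generic constant-coefficient version, and the enumeration of resonance patterns is finer than in Lemma~\ref{lem:2-d} because two of the four roots coincide and thus several of the cancellation identities of the $C_{ijk}$ collapse simultaneously. However, once the resonance list is fixed and the modified expansion from Appendix~\ref{sec:appendix} is in hand, each sub-case reduces to a short inequality between absolute values of the $z_i$, and these are checked one by one.
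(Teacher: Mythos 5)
The overall blueprint (replace the constants $C_{ijk}$ by the polynomial coefficients $D_{ijk}$ and $F_{ijk}$ from Propositions~\ref{prop:a.2}--\ref{prop:a.3}, apply Lemma~\ref{lem:6.77} with the $\ell^{m(z_1)-1}$ error term, and run a resonance case analysis) matches the paper's approach. However, there is a genuine gap in your reduction step.

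You claim that ``the case $|z_1|=1$ forces $|z_i|=1$ for all $i$ by $z_1z_2z_3z_4=1$ and Lemma~\ref{lem:6.6}, and then condition $(B)$ is automatic,'' and you then only treat the range $|z_1|=|z_2|\geq|z_3|\geq|z_4|$. This is false: Lemma~\ref{lem:6.6} requires \emph{three} equal moduli, and with $z_1=z_2$ only two coincide. A concrete counterexample is $\textbf{z}=(1,1,q,1/q)$, which lies in $S$, has $z_1=z_2$, $|z_1|=1$, but $|z_3|=q>1$. Because the labels $z_1=z_2$ are the repeated root, you cannot reorder to force $|z_1|\geq|z_3|$; the configuration where a simple root dominates the repeated root is a genuinely separate case. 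The paper handles this as its Case~2 ($|z_3|>|z_1|$), where the relevant leading coefficient is $a\bigl(D_{311}+D_{314}+D_{341}\bigr)$ and the resonance possibilities are $z_3=qz_1$ (forcing $\alpha=1$, hence $\textbf{z}$ of $(3{+}1)$ type) or $z_3=qz_4$ (forcing $\alpha=1/\sqrt{q}$, hence $\textbf{z}$ of $(2{+}1{+}1)$ type). Without this branch your case analysis is not exhaustive, so the proof as written does not establish the lemma.

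Everything else is consistent with the paper's argument: in the dominant repeated-root regime $|z_1|>|z_3|,\,|z_4|$, the paper checks that $C_2 = aD_{113}(\ell,\ell,0)+aD_{114}(\ell,\ell,0)$ is nonzero unless $z_1=qz_3$ or $z_1=qz_4$, and those resonances together with $\sigma_2(\textbf{z})\in\mathbb{R}$ and $\overline{\sigma_1(\textbf{z})}=\sigma_3(\textbf{z})$ pin down $\alpha=1/\sqrt{q}$, landing on the $F$-polynomial sub-case. To repair your argument, add the missing Case~2 along these lines; the rest of your outline then goes through.
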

\begin{proof}Let $h$ and $f$ be the functions in the proof of Lemma \ref{lem:2-d} and $a=h(v_{0,0,0})$. Let $C_1$, $C_2$ and $C_3$ be the coefficients in Lemma \ref{lem:6.77}. The proof is analogous to the proof of Lemma \ref{lem:2-d}. As an application of Lemma \ref{lem:6.77}, we will verify that either $C_i$ is nonzero for some $i$ or the condition $(B)$ holds in each case.
\bigskip

\noindent\underline{Case 1. $|z_1|>|z_3|$ and $|z_1|>|z_4|$:} Suppose that $z_1\neq qz_i$ for any $i$. Then the polynomial $C_2=aD_{113}(\ell,\ell,0)+aD_{114}(\ell,\ell,0)$ is nonzero. 

Suppose that $z_1=qz_3$ or $z_1=qz_4$. Without loss of generality, we may assume that $z_1=z_2=q\alpha e^{i\theta}$, $z_3=\alpha e^{i\theta}$ and $z_4=\frac{1}{q^2\alpha^3}e^{-3i\theta}$ for some $\theta$ and $\alpha>0$ with $q\alpha>1$. Since $\sigma_2(\textbf{z})$ is real, $\theta=\frac{\pi}{2}k$ for $k\in \mathbb{Z}$. Using this and the equation $\overline{\sigma_1(\textbf{z})}=\sigma_3(\textbf{z})$, we have $\alpha=\frac{1}{\sqrt{q}}$. 
 Then $z_1=z_2=qz_3=qz_4$. The polynomials $F_{331}$ and $F_{313}$ are nonzero. Then the condition $(B)$ holds since the absolute values of the remaining terms are at most 1. 
 \bigskip

 \noindent\underline{Case 2. $|z_3|>|z_1|$:} In this case, $z_3$ corresponds to $z_1$ in Lemma \ref{lem:6.77}. Suppose that $z_3\neq qz_i$ for any $i$. Then the polynomial $$C_1=aD_{311}(\ell,0,0)+aD_{314}(\ell,0,0)+aD_{341}(\ell,0,0)$$ is nonzero.
  
 Suppose that $z_3=qz_1$. Let $z_3=q\alpha e^{i\theta}$, $z_1=\alpha e^{i\theta}$ and $z_4=\frac{1}{q\alpha^3}e^{-3i\theta}$, where $\alpha>0$ with $\alpha>\frac{1}{\sqrt{q}}$. Since $\sigma_2(\textbf{z})$ is real, $\theta=\frac{\pi}{2}k$. This and the equation $\overline{\sigma_1(\textbf{z})}=\sigma_3(\textbf{z})$ imply that $\alpha=1$. The polynomials $D_{141}$, $D_{411}$ and $D_{413}$ is nonzero. 
 
 Suppose that $z_3=qz_4$. Let $z_3=q\alpha e^{i\theta}$, $z_4=\alpha e^{i\theta}$ and $z_1=\frac{1}{\sqrt{q} \alpha}e^{-i\theta}.$ The equation $\overline{\sigma_1(\textbf{z})}=\sigma_3(\textbf{z})$ holds only if $\alpha=\frac{1}{\sqrt{q}}.$ The nonzero polynomials are $D_{114}$, $D_{141}$, $D_{143}$, $D_{411}$, $D_{413}$ and $D_{431}.$ 
 
 In the case when $z_3=qz_1$ or $z_3=qz_4$, the absolute values of remaining terms are at most 1. This implies that the condition $(B)$ holds. Thus we have Lemma \ref{lem:2-e}.
\end{proof}
\subsection{From the condition $(B)$, passing through $(C)$ to the condition $(D)$}\label{sec:6.2}
Theorem \ref{thm:crucial} and Theorem \ref{thm:6.11} describe the complement of $\mathcal{S}^4$ in the simultaneous spectrum and allow us to show that if the simultaneous eigenfunction $f_{\textbf{z}}$ satisfies the condition $(B)$, then the condition $(C)$ holds. In the process of the proof of Theorem \ref{thm:crucial} and Theorem \ref{thm:6.11}, we establish that condition $(C)$ implies condition $(D)$.

\begin{thm}\label{thm:crucial}
Let $\textbf{z}=(z_1,z_2,z_3,z_4)$ be a point in $S$ satisfying $|z_1|\geq|z_2|\geq|z_3|\geq|z_4|$ and $z_i\neq z_j$ for any $i\neq j$. 
If the simultaneous eigenfunction $f_{\textbf{z}}$ satisfies the condition $(B)$, then the condition $(C)$ holds.
\end{thm}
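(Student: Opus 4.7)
The plan is to derive condition $(C)$ from condition $(B)$ in two movements: first extract the vanishing of certain coefficients of $f_{\textbf{z}}$ from the growth bounds in $(B)$, and then translate those vanishing identities, together with the defining constraints of $S$, into the parametrizations listed in Theorem~\ref{thm:main}.

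For the first movement, I would specialize the formula of Theorem~\ref{thm:eigenftn} to the three strata $(\ell,0,0)$, $(\ell,\ell,0)$ and $(\ell,\ell,\ell)$. Using $z_1z_2z_3z_4 = 1$, one may write
\[
\frac{f_{\textbf{z}}(v_{\ell,0,0})}{q^{3\ell/2}} = \sum_{i=1}^4 A_i z_i^\ell,\quad
\frac{f_{\textbf{z}}(v_{\ell,\ell,0})}{q^{2\ell}} = \sum_{\{i,j\}} B_{\{i,j\}} (z_iz_j)^\ell,\quad
\frac{f_{\textbf{z}}(v_{\ell,\ell,\ell})}{q^{3\ell/2}} = \sum_{m=1}^4 D_m z_m^{-\ell},
\]
where $A_i$, $B_{\{i,j\}}$ and $D_m$ are explicit sums of the $C_{ijk}$. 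Condition $(B)$ asks that each left-hand side grow sub-exponentially (i.e., be $O(q^{c\ell/2})$ or $O(q^{c\ell})$ for every $c>0$). Since the $z_i$ are pairwise distinct, any two terms with equal modulus have distinct arguments and can be separated by the oscillation of $(z_i/z_j)^\ell$; isolating the largest-modulus contribution by dividing through and taking limit subsequences, we conclude
\[
A_i = 0 \text{ whenever } |z_i|>1,\qquad B_{\{i,j\}} = 0 \text{ whenever } |z_iz_j|>1,\qquad D_m = 0 \text{ whenever } |z_m|<1.
\]

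For the second movement, I would run a case analysis on the modulus pattern of $\textbf{z}$, using $|z_1|\ge|z_2|\ge|z_3|\ge|z_4|$ and $|z_1z_2z_3z_4|=1$. When $|z_1|=1$, all $|z_i|=1$, which is case $(5)$ of Theorem~\ref{thm:main}. Otherwise $|z_1|>1$, and each vanishing from the previous step becomes a rational identity in $\textbf{z}$. Substituting the product factorization
\[
C_{ijk} = \frac{(z_i-qz_j)(z_i-qz_k)(z_i-qz_\ell)(z_j-qz_k)(z_j-qz_\ell)(z_k-qz_\ell)}{D\,(z_i-z_j)(z_i-z_k)(z_i-z_\ell)(z_j-z_k)(z_j-z_\ell)(z_k-z_\ell)},
\]
each identity localizes on the vanishing of some factor $(z_a - qz_b)$ in a numerator, i.e., a relation $z_a = q z_b$. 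Combining such relations with $\overline{\sigma_1(\textbf{z})} = \sigma_3(\textbf{z})$ and $\sigma_2(\textbf{z})\in\mathbb{R}$, and noting that $|z_a|=q|z_b|$ forces all moduli to be powers of $\sqrt{q}$, the remaining degrees of freedom reduce to a small number of phases, yielding precisely cases $(2)$--$(4)$ of Theorem~\ref{thm:main}.

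The main obstacle will be the bookkeeping of the case analysis: for each magnitude pattern compatible with the ordering and the product constraint, I must show that the vanishing of the prescribed $A_i$, $B_{\{i,j\}}$, $D_m$ forces $\textbf{z}$ into exactly one of the listed families, and that no intermediate pattern (such as $|z_1|>|z_2|>1$ with $|z_3|,|z_4|<1$, or moduli not of the form $q^{k/2}$) survives. The recurring leverage is that every relation $z_a=q z_b$ extracted from a vanishing is rigid: combined with the reflection constraint $\overline{\sigma_1}=\sigma_3$, it leaves only a discrete set of admissible moduli for each $z_i$, and the remaining phase freedom is exactly accounted for by the parameters $\theta,\theta_1,\theta_2,\theta_3$ in Theorem~\ref{thm:main}.
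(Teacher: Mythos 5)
Your proposal takes essentially the same route as the paper's, which proves Theorem~\ref{thm:crucial} via Lemma~\ref{lem:5.2} ($|z_1|>|z_2|$) and Lemma~\ref{lem:6.10} ($|z_1|=|z_2|$): extract from $(B)$ the vanishing of the dominant-modulus coefficients $A_i$, $B_{\{i,j\}}$, $D_m$, translate these into relations $z_a=qz_b$ via factorization, and run a modulus case analysis against the constraints of $S$. The one place you gloss is the ``localization'' step: the $A_i$ are already in product form so the deduction is immediate, but $B_{\{i,j\}}$ and $D_m$ (and the secondary partial sums that appear after imposing one relation $z_a=qz_b$) are sums of several $C_{ijk}$, and that their numerators again factor into products of $(z_a-qz_b)$ and $(z_a-z_b)$ is a non-automatic computation that the paper carries out explicitly in each case --- this is precisely the bookkeeping you flag at the end.
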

\begin{proof} If $|z_1|=1$, we have $\textbf{z}=(e^{i\theta_1},e^{i\theta_2},e^{i\theta_3},e^{-i(\theta_1+\theta_2+\theta_3)})$. The rest part of the proof follows from two below lemmata.
\begin{lem} \label{lem:5.2}
Suppose that $z_i\neq z_j$ for any $i,j$, $|z_1|>1$ and $|z_1|>|z_2|\geq|z_3|\geq |z_4|$. Suppose that the simultaneous eigenfunction $f_{\textbf{z}}$ satisfies the condition $(B)$. 
The 4-tuple $(z_1,z_2,z_3,z_4)$, up to permutation, belongs to one of the following cases:
\begin{itemize}
\item $(q\sqrt{q}e^{\frac{k\pi}{2}i},\sqrt{q}e^{\frac{k\pi}{2}i},\frac{1}{\sqrt{q}}e^{\frac{k\pi}{2}i},\frac{1}{q\sqrt{q}}e^{\frac{k\pi}{2}i})$, $k\in \{0,1,2,3\}$,
\item $(qe^{i\theta},e^{i\theta},e^{-3i\theta},\frac{1}{q}e^{i\theta})$ with $e^{4i\theta}\neq 1$, or
\item $(\sqrt{q}e^{i\theta},e^{i\phi},e^{-i(\phi+2\theta)},\frac{1}{\sqrt{q}}e^{i\theta})$ with $e^{i\phi}\neq e^{-i\theta}$.
\end{itemize}
\end{lem}
The 4-tuple $\textbf{z}$ belonging to the cases above satisfies the condition $(D)$.
\begin{proof}As we mentioned in the proof of Lemma \ref{lem:2-d}, to satisfy the condition $(B)$, the coefficient of $z_i^\ell$, $z_i^\ell z_j^\ell$ and $z_i^\ell z_j^\ell z_k^\ell $ in $\frac{f(v_{\ell,0,0})}{q^{3\ell/2}}$, $\frac{f(v_{\ell,\ell,0})}{q^{2\ell}}$ and $\frac{f(v_{\ell,\ell,\ell})}{q^{3\ell/2}}$ need to be zero whenever $|z_i|,|z_iz_j|,|z_iz_jz_k|>1$ for some $i,j,k$.
Thus the coefficient $C_1$ of $z_1^\ell$ in $\frac{f(v_{\ell,0,0})}{q^{3\ell/2}}$ need to be zero. Hence, it follows that either $z_1=qz_2$, $z_1=qz_3$ or $z_1=qz_4$ holds.

\bigskip

\noindent \underline{Case 1. $z_1=qz_2$}: In this case, $C_{1ij}$, $C_{31i}$, $C_{341}$, $C_{41i}$ and $C_{431}$ are zero. Since $|z_1z_2z_3|=|z_4|^{-1}>1,$ 
the condition $(B)$ does not hold when $C_{213}+C_{231}+C_{321}\neq 0.$ The numerator of $C_{213}+C_{231}+C_{321}$ is of the form
$$(z_1-qz_4)(z_2-qz_1)(z_2-qz_4)(z_3-qz_4)(z_3-qz_2)(z_2-z_3)(q^2+q+1).$$
Under the assumption, $(z_1-qz_4)$, $(z_2-qz_1),$ $(z_3-qz_2)$ and $(z_2-z_3)$ are nonzero. Thus $C_{213}+C_{231}+C_{321}=0$ if and only if $z_2=qz_4$ or $z_3=qz_4$. 

If $z_2=qz_4$, then $C_{342}$, $C_{421}$, $C_{423}$ and $C_{432}$ are nonzero. In this case, if either $|z_3|>1$ or 
$|z_3|=|z_1z_2z_4|^{-1}<1$, the condition $(B)$ does not hold. 
Thus, we have $|z_3|=1$ and $$(z_1,z_2,z_3,z_4)=\left(qe^{i\theta},e^{i\theta},e^{-3i\theta},\frac{1}{q}e^{i\theta}\right).$$ Such a $4$-tuple $\textbf{z}$ satisfies $$|z_3^\ell z_4^m z_2^n|,|z_4^\ell z_2^m z_1^n|,|z_4^\ell z_2^m z_3^n|,|z_4^\ell z_3^m z_2^n|\leq 1$$
for every $\ell\geq m\geq n\ge 0$. Hence, 
$\textbf{z}$ satisfies the condition $(D)$.

Now let us consider the case $z_3=qz_4$. The coefficients $C_{214}$, $C_{241}$, $C_{243}$, $C_{421}$, $C_{423}$ and $C_{432}$ are remaining. Since $z_1z_2z_3z_4=1$, $|z_2z_4|=\frac{1}{q}\geq |z_3z_4|.$ 
This implies that $|z_1z_2|>1$ and $C_{214}$ needs to be zero. This is possible if and only if $z_2=qz_3$, so we have
$$(z_1,z_2,z_3,z_4)=\left(q\sqrt{q}e^{\frac{k\pi}{2}i},\sqrt{q}e^{\frac{k\pi}{2}i},\frac{1}{\sqrt{q}}e^{\frac{k\pi}{2}i},\frac{1}{q\sqrt{q}}e^{\frac{k\pi}{2}i}\right).$$
Since $f_{\textbf{z}}(v_{\ell,m,n})=C_{432}z_4^\ell z_3^m z_2^n$, it follows that $f_{\textbf{z}}\in L^2(\Gamma\backslash\mathcal{B}_4)$. The 4-tuple $\textbf{z}$ satisfies the condition $(D)$.

\noindent \underline{Case 2. $z_1=qz_3$}: In this case, $C_{1ij}$, $C_{21i}$, $C_{241}$, $C_{41i}$ and $C_{421}$ are zero. Similar to Case 1, $C_{231}+C_{312}+C_{321}$ needs to be zero. The numerator of $C_{231}+C_{312}+C_{321}$ is of the form
$$(z_1-qz_4)(z_2-qz_4)(z_3-qz_4)(z_3-qz_1)(z_2-qz_3)(z_2-z_3)(q^2+q+1).$$
Under the assumption, either $z_3=qz_4$ or $z_2=qz_4$ should hold. If $z_2=qz_4$, then it is of the form $(q^{\frac{1}{2}+\alpha}e^{i\theta},q^{\frac{1}{2}-\alpha}e^{-i\theta},q^{-\frac{1}{2}+\alpha}e^{i\theta},q^{-\frac{1}{2}-\alpha}e^{-i\theta})$ for some $\alpha\in (0,\frac{1}{2})$. We may exclude this case since $C_{314}\ne 0$ while $|z_1 z_3|>1$.

Hence, it follows that $z_1=qz_3=q^2z_4$. Following the method in the case when $z_1=qz_2=q^2z_4$, we have $\textbf{z}=(qe^{i\theta},e^{-3i\theta},e^{i\theta},\frac{1}{q}e^{i\theta}).$ The only nonzero coefficients are $C_{243},$ $C_{423},$ $C_{431}$ and $C_{432}$.  The 4-tuple $\textbf{z}$ satisfies the condition $(B)$ since for any $\ell\geq m\geq n$, $$|z_2^\ell z_4^m z_3^n|,|z_4^\ell z_2^m z_3^n|,|z_4^\ell z_3^m z_1^n|,|z_4^\ell z_3^m z_2^n|\leq 1.$$ 

\bigskip

\noindent \underline{Case 3. $z_1=qz_4$}: In this case, $C_{1ij},$ $C_{213}$, $C_{214}$, $C_{231}$, $C_{312}$, $C_{314}$ and $C_{321}$ are equal to zero and the remaining coefficients are nonzero. Since $C_{234}+C_{241}+C_{243}$ and $C_{241}+C_{412}+C_{421}$ 
are nonzero, if either $|z_2|>1$ or $|z_3|=|z_1z_2z_4|^{-1}<1$, the condition $(B)$ does not hold.  Hence we have $|z_2|=|z_3|=1$. This yields $$(z_1,z_2,z_3,z_4)=\left(\sqrt{q}e^{i\theta},e^{i\phi},e^{-i(2\theta+\phi)},\frac{1}{\sqrt{q}}e^{i\theta}\right).$$ For any $\ell\geq m\geq n\ge 0$, the inequality $|z_i^\ell z_j^mz_k ^n|\le 1 $ holds whenever $C_{ijk}\neq 0$. We complete the proof.
\end{proof}
\begin{lem}\label{lem:6.10}
Suppose that $z_i\neq z_j$ for any $i\neq j$, $|z_1|>1$ and $|z_1|=|z_2|\geq|z_3|\geq |z_4|$. Suppose that the simultaneous eigenfunction $f_{\textbf{z}}$ satisfies the condition $(B)$.
The 3-tuple $(z_1,z_2,z_3,z_4)$, up to permutation, is of the form
 $$(z_1,z_2,z_3,z_4)=(\sqrt{q}e^{i\theta},\pm\sqrt{q}e^{-i\theta},\frac{1}{\sqrt{q}}e^{i\theta},\pm\frac{1}{\sqrt{q}}e^{-i\theta})$$ with $e^{i\theta}\neq \pm e^{-i\theta}.$ The 4-tuple $\textbf{z}$ belonging to the cases above satisfies the condition $(D)$.
\end{lem}
\begin{proof}
By assumption, 
the coefficient $C_{123}+C_{124}+C_{213}+C_{214}$ of $z_1^\ell z_2^\ell$ in $\frac{f(v_{\ell,\ell,0})}{q^{2\ell}}$ needs to be zero. Since the numerator of $C_{123}+C_{124}+C_{213}+C_{214}$ is of the form
$$(q+1)^2(z_1-z_2)(z_1-qz_3)(z_1-qz_4)(z_2-qz_3)(z_2-qz_4)(z_3-z_4),$$
either $z_1=qz_3$, $z_1=qz_4$, $z_2=qz_3$ or $z_2=qz_4$ holds.
Let us consider the case that $z_1=qz_3$. Since $|z_2|>1$, $C_{231}+C_{234}+C_{243}$, of which the numerator is of the form
$$(z_2-qz_1)(z_2-qz_3)(z_2-qz_4)(z_3-qz_1)(z_4-qz_3)(z_4-z_3)(q^2+q+1),$$
needs to be zero. Thus we have  $z_2=qz_4$ and $$(z_1,z_2,z_3,z_4)=\left(\sqrt{q}e^{i\theta},\pm\sqrt{q}e^{-i\theta},\frac{1}{\sqrt{q}}e^{i\theta},\pm\frac{1}{\sqrt{q}}e^{-i\theta}\right)$$ with $e^{2i\theta}\neq \pm 1.$ The coefficients $C_{314}$, $C_{341}$, $C_{342}$, $C_{423}$, $C_{431}$ and $C_{432}$ are nonzero and the rest coefficients are zero. For any $\ell\geq m\geq n$, the terms satisfy that $$|z_3^\ell z_1^m z_4^n|, |z_3^\ell z_4^mz_1^n|, |z_3^\ell z_4^m z_2^n|,|z_4^\ell z_2^m z_3^n|,|z_4^\ell z_3^m z_1^n|,|z_4^\ell z_3^m z_2^n|\leq 1.$$
In the case when $z_1=qz_4$, we have $z_2=qz_3$ and
 $$(z_1,z_2,z_3,z_4)=\left(\sqrt{q}e^{i\theta},\pm\sqrt{q}e^{-i\theta},\pm\frac{1}{\sqrt{q}}e^{-i\theta},\frac{1}{\sqrt{q}}e^{i\theta}\right)$$ with $e^{2i\theta}\neq \pm 1.$ Thus 
 $\textbf{z}$ satisfies the condition $(D)$.
\end{proof}
By Lemma \ref{lem:6.6}, Lemma \ref{lem:5.2} and Lemma \ref{lem:6.10}, we have Theorem \ref{thm:crucial}.
\end{proof}
\begin{thm}\label{thm:6.11}
Let $\textbf{z}=(z_1,z_2,z_3,z_4)$ be a point in $S$ with $z_i=z_j$ for some $i,j$, $|z_1|>1$ and $|z_1|\geq |z_2|\geq|z_3|\geq|z_4|$. 
If the simultaneous eigenfunction $f_{\textbf{z}}$ satisfies the condition $(B)$, then the condition $(C)$ holds.
\end{thm}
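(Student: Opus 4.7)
The plan is to mirror the argument of Theorem~\ref{thm:crucial} (and its supporting Lemmata~\ref{lem:5.2} and \ref{lem:6.10}), but now adapted to the degenerate situation where the characteristic equation~\eqref{char poly} has repeated roots. In this setting the constants $C_{ijk}$ from Theorem~\ref{thm:eigenftn} are replaced by polynomial coefficients $D_{ijk}(\ell,m,n)$ and $F_{ijk}(\ell,m,n)$ from Propositions~A.2 and A.3 of the Appendix. The key observation is still the same: condition $(B)$ forces the coefficient of every term $z_i^\ell z_j^m z_k^n$ in $f_{\textbf{z}}(v_{\ell,0,0})$, $f_{\textbf{z}}(v_{\ell,\ell,0})$, $f_{\textbf{z}}(v_{\ell,\ell,\ell})$ whose exponential growth exceeds the allowed rate to vanish identically in $\ell$. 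For polynomial-coefficient terms this means each polynomial involved must be the zero polynomial.

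First I would reduce the analysis via Lemma~\ref{lem:6.6}, which tells us that once three of the $|z_i|$ agree, all four must be $1$, contradicting $|z_1|>1$. Hence at most two of the $|z_i|$ coincide among the three largest, and the possible patterns of equalities to handle are limited: $z_1=z_2$, $z_2=z_3$, $z_3=z_4$, and their combinations with additional equalities (e.g.\ $z_1=z_2$ together with $z_3=z_4$). For each pattern, I would organize the analysis by how $|z_1|$ compares to $|z_2|,|z_3|,|z_4|$, exactly as in Lemmata~\ref{lem:5.2} and \ref{lem:6.10}. In the case $z_1=z_2$ with $|z_1|>|z_3|$, the dominant growth at $v_{\ell,\ell,0}$ comes from terms involving $z_1^\ell z_2^\ell=z_1^{2\ell}$ with a polynomial prefactor of degree $\ge 1$; condition $(B)$ forces the leading polynomial $D_{11\ast}(\ell,\ell,0)+D_{1\ast 1}(\ell,\ell,0)+D_{\ast 11}(\ell,\ell,0)$ and the subleading polynomials to vanish, which (as computed in the Appendix) is a rigid algebraic condition on $\textbf{z}$.

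Next I would combine these vanishing identities with the two real constraints defining $S$, namely $\overline{\sigma_1(\textbf{z})}=\sigma_3(\textbf{z})$, $\sigma_2(\textbf{z})\in\mathbb{R}$, together with $\sigma_4(\textbf{z})=1$. As in the proof of Lemma~\ref{lem:5.2}, the vanishing of the relevant coefficient polynomials typically forces a relation of the form $z_i=qz_j$ for some pair, and then the unitarity-type constraints pin down the modulus and the arguments. For example, $z_1=z_2$ together with $z_1=qz_3$ forces $z_1=qz_3=qz_4$ and then $\alpha=\frac{1}{\sqrt{q}}$, $e^{4i\theta}=1$, producing one of the tuples of the form $(q\sqrt{q}e^{k\pi i/2},\sqrt{q}e^{k\pi i/2},\frac{1}{\sqrt{q}}e^{k\pi i/2},\frac{1}{q\sqrt{q}}e^{k\pi i/2})$. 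Cases with $z_2=z_3$ or $z_3=z_4$ are treated analogously, eventually matching the cases in Theorem~\ref{thm:main} (the repeated tuples in the third case $\pm\sqrt{q}e^{\pm i\theta}$ arise naturally here). At the end of each subcase, one verifies condition $(D)$ — that all surviving coefficient polynomials are attached only to terms with $|z_i^\ell z_j^m z_k^n|\le 1$ — exactly as was done at the end of Lemmata~\ref{lem:5.2} and~\ref{lem:6.10}.

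The main obstacle I anticipate is the bookkeeping for the polynomial coefficients: when two roots coalesce, a pair of the $C_{ijk}$ merges into a single polynomial in $\ell,m,n$, and one must check that not only the leading term (in $\ell$) but also the lower-order terms vanish. This requires the explicit description of $D_{ijk}$ and $F_{ijk}$ from the Appendix, and the combinatorial identification of which subcollections of $(i,j,k)$-triples contribute. Once this bookkeeping is set up cleanly, each case falls by the same two-step mechanism — forced algebraic relation among the $z_i$, followed by the $S$-constraints — used in the generic proof.
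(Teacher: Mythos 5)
Your overall plan is the same as the paper's: invoke Lemma~\ref{lem:6.6} to limit the coincidence patterns, pass to the polynomial coefficients $D_{ijk}$ and $F_{ijk}$ from Propositions~\ref{prop:a.2} and \ref{prop:a.3}, argue that condition $(B)$ forces the coefficient of every over-fast term to vanish \emph{as a polynomial} in $\ell$, feed the resulting relations $z_i=qz_j$ into the $S$-constraints, and finally check condition $(D)$. The paper organizes this into Lemmata~\ref{lem:6.12} (exactly one coincidence, $z_1=z_2$) and \ref{lem:6.13} ($z_1=z_2$, $z_3=z_4$), which is what you describe in prose. The strategy is correct; however, two details in your bookkeeping are wrong and would derail the execution if carried through.

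First, the coefficient you single out for $f(v_{\ell,\ell,0})$ is not the right one. You propose summing $D_{11\ast}(\ell,\ell,0)+D_{1\ast 1}(\ell,\ell,0)+D_{\ast 11}(\ell,\ell,0)$. At $(m,n)=(\ell,0)$ the three families attach to terms with different growth rates: $D_{11i}$ multiplies $z_1^{\ell+m}z_i^n = z_1^{2\ell}$, while both $D_{1i1}z_1^{\ell+n}z_i^m$ and $D_{i11}z_i^\ell z_1^{m+n}$ give $(z_1 z_i)^\ell$, which is strictly slower when $|z_1|>|z_i|$. So the coefficient of the dominant $z_1^{2\ell}$ is only $D_{113}(\ell,\ell,0)+D_{114}(\ell,\ell,0)$, exactly as in the paper's Case~1 of Lemma~\ref{lem:6.12}. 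Moreover the polynomial prefactor $q+1+(\ell-m)(1-q)$ in $D_{11i}$ evaluates at $m=\ell$ to the constant $q+1$, so here the coefficient is in fact constant, not of degree $\ge 1$ as you claim. Your plan would incorrectly add lower-order contributions into what should be the leading coefficient and misjudge its degree.

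Second, the tuple you claim to obtain in the example $z_1=z_2$ with $z_1=qz_3$ is wrong. You say this yields the trivial-spectrum tuple $(q\sqrt{q}e^{k\pi i/2},\sqrt{q}e^{k\pi i/2},\tfrac{1}{\sqrt{q}}e^{k\pi i/2},\tfrac{1}{q\sqrt{q}}e^{k\pi i/2})$; but that tuple has four pairwise distinct entries, so it can never arise from a $\textbf{z}$ with a repeated entry. As the paper works out, $z_1=z_2$ together with $z_1=qz_3$ and the $S$-constraints forces $z_1=z_2=qz_3=qz_4$, i.e.\ $z_3=z_4$; within Lemma~\ref{lem:6.12} (where $z_3\neq z_4$ is assumed) this is a contradiction, while within Lemma~\ref{lem:6.13} it yields the $2{+}2$ tuple $(\sqrt{q}e^{k\pi i/2},\sqrt{q}e^{k\pi i/2},\tfrac{1}{\sqrt{q}}e^{k\pi i/2},\tfrac{1}{\sqrt{q}}e^{k\pi i/2})$ — the degenerate boundary of the third case of Theorem~\ref{thm:main}, not the first. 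The degenerate outputs of Theorem~\ref{thm:6.11} are always boundary points of the non-generic cases of Theorem~\ref{thm:main} (the $3{+}1$ tuple at $\theta=k\pi/2$, the $2{+}1{+}1$ tuple at $\phi=-\theta$, and the $2{+}2$ tuple at $\theta=k\pi/2$); the trivial tuple is attained only in the generic analysis of Lemma~\ref{lem:5.2}.
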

\begin{proof} The proof follows from two below lemmata.
\begin{lem}\label{lem:6.12}Let $\textbf{z}=(z_1,z_2,z_3,z_4)$ be a point in $S$ with $|z_k|>1$ for some $k$ and $z_i\neq z_j$ except for $z_1=z_2$. Suppose that the simultaneous eigenfunction $f_{\textbf{z}}$ satisfies the condition $(B)$.
The 4-tuple $(z_1,z_2,z_3,z_4)$ is of the from
\begin{itemize}
\item $\left(e^{\frac{\pi k}{2}i},e^{\frac{\pi k}{2}i},qe^{\frac{\pi k}{2}i},\frac{1}{q}e^{\frac{\pi k}{2}i}\right)$ for $k\in \mathbb{Z}$,
\item $\left(e^{-i\theta},e^{-i\theta},\sqrt{q}e^{i\theta},\frac{1}{\sqrt{q}}e^{i\theta}\right).$
\end{itemize}
The 4-tuple $\textbf{z}$ belonging to the cases above satisfies the condition $(D)$.
\end{lem}
\begin{proof}The proof is analogous to Lemma \ref{lem:5.2} and Lemma \ref{lem:6.10}. The eigenfunction $f$ of a simultaneous eigenvalue $(\lambda_1,\lambda_2,\lambda_3)$ is given in Proposition \ref{prop:a.2}. We consider two cases that $|z_1|$ is the maximal modulus or not.
\bigskip

\noindent \underline{Case 1. $|z_1|>|z_3|$ and $|z_1|>|z_4|$}: The coefficient $C_2$ of $z_1^{2\ell}$ in $\frac{f(v_{\ell,\ell,0})}{q^{2\ell}}$ needs to be zero. The numerator of $C_2=D_{113}(\ell,\ell,0)+D_{114}(\ell,\ell,0)$ is of the from
$$-(q+1)^2(z_1-qz_3)^2(z_1-qz_4)^2(z_3-z_4).$$
Thus $C_2=0$ only if $z_1=qz_3$ or $z_1=qz_4$. As in the proof of Lemma \ref{lem:2-e}, we have $z_1=z_2=qz_3=qz_4$. It does not fulfill the assumption that $z_3\neq z_4$. 
\bigskip

\noindent \underline{Case 2. $|z_3|>|z_1|$}: The coefficient $C_1$ of $z_3^\ell$ in $\frac{f(v_{\ell,0,0})}{q^{3\ell/2}}$ needs to be zero. 
The numerator of $C_1=D_{311}(\ell,0,0)+D_{314}(\ell,0,0)+D_{341}(\ell,0,0)$ is equal to 
$$-(z_3-qz_1)^2(z_3-qz_4)(z_1-z_4)^2(q^2+q+1).$$
It follows that either $z_3=qz_1$ or $z_3=qz_4$ holds.

Suppose that $z_3=qz_1$. In the proof of Lemma \ref{lem:2-e}, we have $z_3=qz_1=q^2z_4$. Thus the tuple $\textbf{z}$ is of the form $$\textbf{z}=\left(e^{\frac{\pi k}{2}i},e^{\frac{\pi k}{2}i},qe^{\frac{\pi k}{2}i},\frac{1}{q}e^{\frac{\pi k}{2}i}\right)\text{ for }k\in \mathbb{Z}.$$
The polynomials $D_{141}$, $D_{411}$ and $D_{413}$ is nonzero and the remaining polynomials are zero. Since for any $\ell\geq m\geq n\geq 0$,
$$|z_1^{\ell+n}z_4^m|,|z_4^\ell z_1^{m+n}|,|z_4^\ell z_1^m z_3^n|\leq 1,$$
the 4-tuple $\textbf{z}$ satisfies the condition $(D)$. 

Suppose that $z_3=qz_4$. In the proof of Lemma \ref{lem:2-e}, we have $|z_1|=1$. This shows that $\textbf{z}$ is of the form
$$\textbf{z}=\left(e^{-i\theta},e^{-i\theta},\sqrt{q}e^{i\theta},\frac{1}{\sqrt{q}}e^{i\theta}\right).$$
For any $\ell\geq m\geq n\geq0$,$$|z_1^{\ell+m}z_4^n|,|z_1^{\ell+n}z_4^m|,|z_1^\ell z_4^mz_3^n|,|z_4^\ell z_1^{m+n}|,|z_4^\ell,z_1^mz_3^n|,|z_4^\ell z_3^m z_1^n|\leq 1.$$ 
Thus the condition $(D)$ holds. 
This completes proof.
\end{proof} 
\begin{lem}\label{lem:6.13}Let $\textbf{z}=(z_1,z_2,z_3,z_4)$ be a point in $S$ with $z_1=z_2$, $z_3=z_4$ and $|z_1|>|z_3|$.
 Suppose that the simultaneous eigenfunction $f_{\textbf{z}}$ satisfies the condition $(B)$.
The 4-tuple $(z_1,z_2,z_3,z_4)$ is of the form
$$\left(\sqrt{q}e^{\frac{\pi}{2}ki},\sqrt{q}e^{\frac{\pi}{2}ki},\frac{1}{\sqrt{q}}e^{\frac{\pi}{2}ki},\frac{1}{\sqrt{q}}e^{\frac{\pi}{2}ki}\right)\text{ for } k\in \mathbb{Z}.$$
and satisfies the condition $(D)$.
\end{lem}
\begin{proof}Similarly, if $F_{113}$ is nonzero, the condition $(B)$ does not hold. The polynomial $F_{113}$ is zero only if $z_1=qz_3$. In this case, $F_{313}$ and $F_{331}$ are the nonzero polynomials and we have
$|z_3^{\ell+n}z_1^m|,|z_3^{\ell+m}z_1^n|\leq 1$ for any $\ell \geq m\geq n\geq 0$. This implies that $\textbf{z}$ satsifies condition $(D)$. 
\end{proof}
Using Lemma \ref{lem:6.12} and Lemma \ref{lem:6.13}, we complete the proof.
\end{proof}
\subsection{From the condition $(D)$ to the condition $(A)$}\label{sec:6.3}
Proposition \ref{prop:spectest}, Corollary \ref{coro:6.4} and Corollary \ref{coro:6.5} enable to show that if the simultaneous eigenfunction $f_\textbf{z}$ indexed by 4-tuple $\textbf{z}$ satisfies the condition $(D)$, the condition $(A)$ holds. 
\begin{prop}\label{prop:spectest}
Let $\textbf{z}=(z_1,z_2,z_3,z_4)$ be a point in $S$ for which $z_i\neq z_j$ for every $i\ne j$. 
If the eigenfunction $f_\textbf{z}$ satisfies the condition $(D)$, the condition $(A)$ holds.
\end{prop}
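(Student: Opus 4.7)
The plan is to verify condition $(A)$ via Weyl's criterion: construct a sequence $\phi_N\in L^2_w(\Gamma\backslash\mathcal{B}_4)$ with $\|\phi_N\|_{2,w}=1$ and $\|(A_{w,i}-\lambda_i)\phi_N\|_{2,w}\to 0$ for $i=1,2,3$. The natural candidate is a truncation of the formal eigenfunction $f_\textbf{z}$ from Theorem~\ref{thm:eigenftn}. Although $f_\textbf{z}$ need not lie in $L^2_w$, condition $(D)$ guarantees that each surviving monomial $C_{\tau(1)\tau(2)\tau(3)}\sqrt{q}^{3\ell+m-n}z_{\tau(1)}^\ell z_{\tau(2)}^m z_{\tau(3)}^n$ in the expansion satisfies $|z_{\tau(1)}^\ell z_{\tau(2)}^m z_{\tau(3)}^n|\le 1$ throughout the cone $\{\ell\ge m\ge n\ge 0\}$. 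Combined with the weight estimate $w(v_{\ell,m,n})\asymp q^{-(3\ell+m-n)}$ read off from Lemmas~\ref{lem:4.2}--\ref{lem:4.4}, the pointwise quantity $|f_\textbf{z}(v_{\ell,m,n})|^2 w(v_{\ell,m,n})$ is uniformly bounded on the fundamental domain.

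Next, I would set $\phi_N(v_{\ell,m,n})=f_\textbf{z}(v_{\ell,m,n})$ for $\ell\le N$ and $\phi_N(v_{\ell,m,n})=0$ otherwise. Since $A_{w,i}f_\textbf{z}=\lambda_i f_\textbf{z}$, the defect $(A_{w,i}-\lambda_i)\phi_N$ is supported on the thin boundary slab $\{v_{\ell,m,n}:\ell\in\{N-1,N,N+1\}\}$, as the finite-range recursions in Lemmas~\ref{lem:4.6}--\ref{lem:4.11} make explicit. Let $d$ denote the dimension of the sub-simplex of $\{\ell\ge m\ge n\ge 0\}$ on which at least one monomial with $C_\tau\neq 0$ attains modulus $1$; this dimension is determined by the pattern of strict inequalities among $|z_{\tau(1)}|, |z_{\tau(1)}z_{\tau(2)}|, |z_{\tau(1)}z_{\tau(2)}z_{\tau(3)}|\le 1$. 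Expanding $|f_\textbf{z}|^2 w$ as a double sum over $\tau,\tau'\in S_4$ and summing over $\ell\le N$, the diagonal contributions produce a main term of order $N^d$, while the boundary slab intersects the modulus-one stratum in a set of size $O(N^{d-1})$. The target estimates are therefore
\[
\|\phi_N\|_{2,w}^2\gtrsim N^d,\qquad\|(A_{w,i}-\lambda_i)\phi_N\|_{2,w}^2=O(N^{d-1}),
\]
so that $\phi_N/\|\phi_N\|_{2,w}$ serves as the required Weyl sequence and verifies $(A)$.

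The main obstacle is the lower bound in the first displayed estimate. Expanding $|f_\textbf{z}|^2$ produces off-diagonal oscillating sums $C_\tau\overline{C_{\tau'}}\bigl(z_{\tau(1)}\overline{z_{\tau'(1)}}\bigr)^\ell\bigl(z_{\tau(2)}\overline{z_{\tau'(2)}}\bigr)^m\bigl(z_{\tau(3)}\overline{z_{\tau'(3)}}\bigr)^n$ for $\tau\neq\tau'$, which in principle could cancel the diagonal contribution. Using the pairwise distinctness of the $z_i$ together with the explicit formulas for $C_{ijk}$ from Section~\ref{sec:5}, a case analysis over the $\textbf{z}$-classes listed in Theorem~\ref{thm:main} combined with the finite geometric-series estimate $\sum_{k=0}^N\xi^k=O(|1-\xi|^{-1})$ for $\xi\neq 1$ shows that each off-diagonal contribution is bounded by $O(N^{d-1})$ and is absorbed into the error. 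In the degenerate case where the modulus-one stratum is $0$-dimensional, the function $f_\textbf{z}$ itself lies in $L^2_w$ and is a genuine eigenfunction, making condition $(A)$ immediate.
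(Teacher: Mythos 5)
Your strategy — hard truncation at $\ell\le N$ and a dimension count — is genuinely different from the paper's, which uses exponential damping: the paper sets $f_{\textbf{z}}^\epsilon(v_{\ell,m,n})=(1-\epsilon)^\ell f_{\textbf{z}}(v_{\ell,m,n})$ and observes that because $f_{\textbf{z}}$ is an exact eigenfunction, $(A_{w,i}-\lambda_i)f_{\textbf{z}}^\epsilon$ is $\epsilon$ times a fixed finite combination of translates of $f_{\textbf{z}}^\epsilon$ with weight ratios that are uniformly bounded, yielding directly
\[
\|A_{w,i}f_{\textbf{z}}^\epsilon-\lambda_i f_{\textbf{z}}^\epsilon\|_{2,w}\le \epsilon(1-\epsilon)^{-1}(q^4+q^3+2q^2+q+1)\,\|f_{\textbf{z}}^\epsilon\|_{2,w}
\]
and hence $(A)$ as $\epsilon\to 0^+$. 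That argument needs nothing more than the pointwise bound $|f_{\textbf{z}}(v)|^2w(v)\le (\sum|C_{\tau}|)^2$ supplied by $(D)$; no lower bound on any norm and no interference analysis.

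The gap in your proposal is exactly the step you flag as the ``main obstacle.'' The assertion that every off-diagonal contribution is $O(N^{d-1})$ is not true in the generality you state. Writing $u=\ell-m$, $v=m-n$, $w=n$ and $Z_\tau^{(1)}=z_{\tau(1)}$, $Z_\tau^{(2)}=z_{\tau(1)}z_{\tau(2)}$, $Z_\tau^{(3)}=z_{\tau(1)}z_{\tau(2)}z_{\tau(3)}$, the cross-term $(\tau,\tau')$ sums to order $N^{k'}$ with $k'=\#\{i:Z_\tau^{(i)}=Z_{\tau'}^{(i)}\text{ and }|Z_\tau^{(i)}|=1\}$. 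One can show $k'\le 2$ when $z_i$ are pairwise distinct, but when $d=2$ this is the \emph{same} order as the diagonal, and ruling out genuine cancellation requires knowing precisely which $C_{ijk}$ vanish and that the remaining $\xi^{(i)}\ne 1$ — a nontrivial, case-by-case check on the structure of the surviving coefficients, not a consequence of the geometric-series estimate alone. Moreover, invoking the $\textbf{z}$-classes of Theorem~\ref{thm:main} to perform that case analysis risks circularity: Theorem~\ref{thm:main} is $(A)\Rightarrow(C)$ and is established through $(A)\Rightarrow(B)\Rightarrow(C)$, while you are proving the final link $(D)\Rightarrow(A)$ of the cycle. To make your route rigorous you would first need to establish $(D)\Rightarrow(C)$ independently (this is in fact doable, since $(D)$ forces the same coefficient vanishings used in Lemmas~\ref{lem:5.2} and~\ref{lem:6.10}) and then carry out the off-diagonal bookkeeping explicitly for each class. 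As written, the proposal leaves this verified for none of them, whereas the paper's damping argument sidesteps the entire issue.
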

\begin{proof}Suppose that $|z_i^\ell z_j^mz_k^n|< 1$ for any $i,j,k$ with $C_{ijk}\neq 0$. Then it is not difficult to see that $\|f_\textbf{z}\|_{2,w}<\infty$. 

Now consider the case when $|z_i^\ell z_j^mz_k^n|=1$ for some $i,j,k$ with $C_{ijk}\neq 0$. In this case, $f_\textbf{z}$ is not contained in $L^2_w(\Gamma\backslash\mathcal{B}_4)$. Define a function $f_{\textbf{z}}^\epsilon$ by $$f_{\textbf{z}}^\epsilon (v_{\ell,m,n})=(1-\epsilon)^\ell f_{\textbf{z}}(v_{\ell,m,n}).$$
By assumption, $|f_{\textbf{z}}(v_{\ell,m,n})|^2w(v_{\ell,m,n})\leq C:=\left(\sum |C_{ijk}|\right)^2$ and $ f_{\textbf{z}}^\epsilon$ is $L^2$. We claim that for any $\epsilon\in(0,1/2)$, the following inequality holds:
$$\|A_{w,i}f_{\textbf{z}}^\epsilon-\lambda_i f_{\textbf{z}}^\epsilon\|_{2,w}\leq \epsilon(1-\epsilon)^{-1} (q^4+q^3+2q^2+q+1)\|f_{\textbf{z}}^\epsilon\|_{2,w}.$$
From the above inequality, we have $$\displaystyle\lim_{\epsilon \rightarrow0+}\frac{\|A_{w,i}f_{\textbf{z}}^\epsilon-\lambda_i f_{\textbf{z}}^\epsilon\|_{2,w}}{\|f_{\textbf{z}}^\epsilon\|_{2,w}}=0.$$
 
Let us prove the claim. Since $f_{\textbf{z}}$ is the eigenfunction of the simultaneous eigenvalue $\lambda_i$ associated to $A_{w,i}$, we have $\lambda_if_\textbf{z}^\epsilon(v_{\ell,m,n})=(1-\epsilon)^\ell A_{w,i}f_\textbf{z}(v_{\ell,m,n}).$ By the definition of $A_{w,i}$, the following equations hold:
 \begin{equation}
 \begin{split}\nonumber
 A_{w,1}f_{\textbf{z}}^\epsilon(v_{0,0,0})-\lambda_{1}f_{\textbf{z}}^\epsilon(v_{0,0,0})&=-\epsilon(q^3+q^2+q+1)f_\textbf{z}(v_{1,0,0})\\
  A_{w,1}f_{\textbf{z}}^\epsilon(v_{\ell,0,0})-\lambda_{1}f_{\textbf{z}}^\epsilon(v_{\ell,0,0})&=-\epsilon(1-\epsilon)^\ell f_\textbf{z}(v_{\ell+1,0,0})\\
  A_{w,1}f_{\textbf{z}}^\epsilon(v_{\ell,\ell,0})-\lambda_{1}f_{\textbf{z}}^\epsilon(v_{\ell,\ell,0})&=-\epsilon(1-\epsilon)^\ell(q+1)f_\textbf{z}(v_{\ell+1,\ell,0})\\
   A_{w,1}f_{\textbf{z}}^\epsilon(v_{\ell,\ell,\ell})-\lambda_{1}f_{\textbf{z}}^\epsilon(v_{\ell,\ell,\ell})&=\epsilon(1-\epsilon)^{\ell-1}q^3f_\textbf{z}(v_{\ell-1,\ell-1,\ell-1})\\&\qquad\qquad-\epsilon(1-\epsilon)^\ell(q^2+q+1)f_\textbf{z}(v_{\ell+1,\ell,\ell})\\
   A_{w,1}f_{\textbf{z}}^\epsilon(v_{\ell,m,0})-\lambda_{1}f_{\textbf{z}}^\epsilon(v_{\ell,m,0})&=-\epsilon(1-\epsilon)^\ell f_\textbf{z}(v_{\ell+1,m,0})\\
      A_{w,1}f_{\textbf{z}}^\epsilon(v_{\ell,m,m})-\lambda_{1}f_{\textbf{z}}^\epsilon(v_{\ell,m,m})&=\epsilon(1-\epsilon)^{\ell-1}q^3 f_\textbf{z}(v_{\ell+1,m,0})-\epsilon(1-\epsilon)^\ell f_\textbf{z}(v_{\ell+1,m,m})\\  
         A_{w,1}f_{\textbf{z}}^\epsilon(v_{\ell,\ell,m})-\lambda_{1}f_{\textbf{z}}^\epsilon(v_{\ell,\ell,m})&=\epsilon(1-\epsilon)^{\ell+1} q^3 f_\textbf{z}(v_{\ell-1,\ell-1,m-1})-\epsilon(1-\epsilon)^\ell f_\textbf{z}(v_{\ell+1,\ell,m})\\
   A_{w,1}f_{\textbf{z}}^\epsilon(v_{\ell,m,n})-\lambda_{1}f_{\textbf{z}}^\epsilon(v_{\ell,m,n})&=\epsilon(1-\epsilon)^{\ell-1} q^3f_\textbf{z}(v_{\ell-1,m-1,n-1})-\epsilon(1-\epsilon)^\ell f_{\textbf{z}}(v_{\ell+1,m,n}).\\
 \end{split}
 \end{equation}
 This proves the claim when $i=1$. Similar argument holds for $i=2$ and $3$.
\end{proof}

\begin{coro}\label{coro:6.4}Let $\textbf{z}=(z_1,z_2,z_3,z_4)$ be a point in $S$ with $z_i\neq z_j$ except for $z_1=z_2$. 
If the eigenfunction $f_\textbf{z}$ satisfies the condition $(D)$, the condition $(A)$ holds.
\end{coro}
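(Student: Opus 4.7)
The plan is to repeat verbatim the construction used in Proposition~\ref{prop:spectest}, so the key task is to verify that the two estimates driving that proof, namely $f_{\textbf{z}}^\epsilon\in L^2_w(\Gamma\backslash\mathcal{B}_4)$ and $\|A_{w,i}f_{\textbf{z}}^\epsilon-\lambda_i f_{\textbf{z}}^\epsilon\|_{2,w}=O(\epsilon)\|f_{\textbf{z}}^\epsilon\|_{2,w}$, survive when the constants $C_{ijk}$ are replaced by the polynomial coefficients $D_{ijk}(\ell,m,n)$ that govern $f_{\textbf{z}}$ in the repeated-root case $z_1=z_2$ (Proposition~\ref{prop:a.2}).

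First I would record the coarse bound on $f_{\textbf{z}}$ that condition $(D)$ gives. Since $z_1=z_2$ has multiplicity $2$ and the remaining roots are simple, every $D_{ijk}(\ell,m,n)$ is a polynomial of degree at most $1$ in $\ell,m,n$; under $(D)$ every surviving term $D_{ijk}(\ell,m,n)\sqrt{q}^{\,3\ell+m-n}z_i^\ell z_j^m z_k^n$ obeys $|z_i^\ell z_j^m z_k^n|\le 1$, so
$$|f_{\textbf{z}}(v_{\ell,m,n})|\;\le\; P(\ell,m,n)\,q^{(3\ell+m-n)/2}$$
for a polynomial $P$ of degree at most $1$. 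Combining this with the stabilizer computations of Lemmas~\ref{lem:4.2}--\ref{lem:4.4}, which yield $w(v_{\ell,m,n})\le C_1 q^{-3\ell-m+n}$, gives $|f_{\textbf{z}}(v_{\ell,m,n})|^2 w(v_{\ell,m,n})\le C_2\,P(\ell,m,n)^2$. Consequently the damped function
$$f_{\textbf{z}}^\epsilon(v_{\ell,m,n})\;:=\;(1-\epsilon)^\ell f_{\textbf{z}}(v_{\ell,m,n})$$
satisfies $\|f_{\textbf{z}}^\epsilon\|_{2,w}^2\le C_2\sum_{\ell\ge m\ge n\ge 0}P(\ell,m,n)^2(1-\epsilon)^{2\ell}<\infty$ for every $\epsilon\in(0,1)$, since a polynomial of fixed degree against a geometric series in $\ell$ is summable (the inner sum in $m,n$ over $0\le n\le m\le \ell$ only introduces another polynomial factor in $\ell$).

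Next I would reproduce, line by line, the computation of $A_{w,i}f_{\textbf{z}}^\epsilon(v_{\ell,m,n})-\lambda_i f_{\textbf{z}}^\epsilon(v_{\ell,m,n})$ carried out in Proposition~\ref{prop:spectest}. Because $A_{w,i}f_{\textbf{z}}=\lambda_if_{\textbf{z}}$ (Proposition~\ref{prop:a.2}) and the neighborhood formulas in Section~\ref{sec:4} shift the first coordinate by at most $\pm 1$, each difference telescopes into an $\epsilon$-weighted combination of the form $[(1-\epsilon)^{\ell\pm1}-(1-\epsilon)^\ell]f_{\textbf{z}}(v_{\ell\pm1,\star,\star})$, which is bounded in absolute value by $\epsilon(1-\epsilon)^{\ell-1}|f_{\textbf{z}}(v_{\ell\pm1,\star,\star})|$. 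Squaring and summing against $w$ then reproduces the estimate
$$\|A_{w,i}f_{\textbf{z}}^\epsilon-\lambda_i f_{\textbf{z}}^\epsilon\|_{2,w}\;\le\; C_3\,\epsilon(1-\epsilon)^{-1}\|f_{\textbf{z}}^\epsilon\|_{2,w},$$
with $C_3$ depending only on $q$ and on $\deg P$; the polynomial factors cancel on the two sides of the ratio because $P(\ell\pm1,\star,\star)/P(\ell,\star,\star)=1+O(1/\ell)$ as $\ell\to\infty$. Letting $\epsilon\to 0^+$ then exhibits an approximating sequence $f_{\textbf{z}}^\epsilon/\|f_{\textbf{z}}^\epsilon\|_{2,w}$ verifying condition $(A)$.

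The only nonroutine point I anticipate is the verification that the polynomial factors in the numerator really are controlled by those in the denominator: a naive bound replaces $P$ by its supremum over the neighbors, and one must check that the resulting constant is uniform in $\epsilon$. This is handled by observing that the neighboring polynomial values differ by a quantity of order $\deg P$, and by comparing the truncated sums with their limits via dominated convergence on the lattice $\{\ell\ge m\ge n\ge 0\}$. Once this is in place the corollary follows by exactly the same limit argument as Proposition~\ref{prop:spectest}.
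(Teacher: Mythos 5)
Your proposal matches the paper's proof: both observe that Proposition~\ref{prop:a.2} and condition $(D)$ give the polynomial bound $|f_{\textbf{z}}(v_{\ell,m,n})|^2 w(v_{\ell,m,n})\le (A\ell+B)^2$, and then rerun the damping argument of Proposition~\ref{prop:spectest} with $f_{\textbf{z}}^\epsilon=(1-\epsilon)^\ell f_{\textbf{z}}$. The ``nonroutine point'' you flag is actually unnecessary: since $w(v_{\ell,m,n})/w(v_{\ell\pm 1,\star,\star})$ is a fixed power of $q$, the key norm inequality follows by reindexing the sum term-by-term, with no need to compare $P(\ell\pm 1)$ against $P(\ell)$.
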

\begin{proof}By assumption and Proposition \ref{prop:a.2}, $|f(v_{\ell,m,n})|^2w(v_{\ell,m,n})\leq (A\ell+B)^2$ for some $A,B>0$. Following the proof of Proposition \ref{prop:spectest}, we have Corollary \ref{coro:6.4}.
\end{proof}
\begin{coro}\label{coro:6.5}Let $\textbf{z}=(z_1,z_2,z_3,z_4)$ be a point in $S$ with $z_1= z_2$, $z_3=z_4$ and $z_1\neq z_3$. 
If the eigenfunction $f_\textbf{z}$ satisfies the condition $(D)$, the condition $(A)$ holds.
\end{coro}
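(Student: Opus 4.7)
The plan is to imitate, with minor book-keeping adjustments, the exponential-damping argument used in Proposition~\ref{prop:spectest} and Corollary~\ref{coro:6.4}. For each $\epsilon\in(0,1/2)$ I define
$$f_{\textbf{z}}^\epsilon(v_{\ell,m,n}) := (1-\epsilon)^\ell f_{\textbf{z}}(v_{\ell,m,n}),$$
and aim to show both $f_{\textbf{z}}^\epsilon\in L^2_w(\Gamma\backslash\mathcal{B}_4)$ and the approximate-eigenequation
$$\|A_{w,i} f_{\textbf{z}}^\epsilon - \lambda_i f_{\textbf{z}}^\epsilon\|_{2,w} \,\le\, \epsilon(1-\epsilon)^{-1}(q^4+q^3+2q^2+q+1)\,\|f_{\textbf{z}}^\epsilon\|_{2,w}$$
for $i=1,2,3$. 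Dividing by $\|f_{\textbf{z}}^\epsilon\|_{2,w}$ and letting $\epsilon\to 0^+$ then produces the Weyl sequence that certifies condition~$(A)$.

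The first step is the polynomial envelope. By Proposition~\ref{prop:a.3}, in the doubly-degenerate regime $z_1=z_2$, $z_3=z_4$ the eigenfunction $f_{\textbf{z}}$ is a finite sum of terms of the form $F_{ijk}(\ell,m,n)\sqrt{q}^{\,3\ell+m-n}z_i^\ell z_j^m z_k^n$ with each $F_{ijk}$ a polynomial in $(\ell,m,n)$ of total degree at most two (one extra factor is introduced by each collision of roots). Condition~$(D)$ forces $F_{ijk}\equiv 0$ whenever $|z_i^\ell z_j^m z_k^n|>1$, so every surviving monomial has modulus at most one on the cone $\ell\ge m\ge n\ge 0$. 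Combining this with the closed-form weights of Lemmas~\ref{lem:4.2}--\ref{lem:4.4} yields
$$|f_{\textbf{z}}(v_{\ell,m,n})|^2\,w(v_{\ell,m,n}) \,\le\, P(\ell,m,n)^2$$
for some polynomial $P$ of degree at most two. Consequently $\sum_{\ell\ge m\ge n\ge 0}(1-\epsilon)^{2\ell}P(\ell,m,n)^2<\infty$ for every $\epsilon>0$, which certifies $f_{\textbf{z}}^\epsilon\in L^2_w$.

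The second step is the defect estimate. Since $A_{w,i}f_{\textbf{z}}=\lambda_i f_{\textbf{z}}$ holds exactly, the only source of discrepancy in $A_{w,i}f_{\textbf{z}}^\epsilon-\lambda_i f_{\textbf{z}}^\epsilon$ is the mismatch of the damping factor $(1-\epsilon)^\ell$ across those edges that change the first coordinate. Running through Lemmas~\ref{lem:4.5}--\ref{lem:4.11} vertex-class by vertex-class, exactly as in the tabulation displayed in the proof of Proposition~\ref{prop:spectest}, expresses the defect at each $v_{\ell,m,n}$ as $\epsilon(1-\epsilon)^{\ell-1}$ times a finite linear combination of values of $f_{\textbf{z}}$ at neighbours of type $v_{\ell\pm 1,\,\cdot,\,\cdot}$, with coefficients bounded in modulus by the in-degree $q^4+q^3+2q^2+q+1$. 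Squaring, weighting, summing over the fundamental domain, and using $(1-\epsilon)^{\ell-1}\le(1-\epsilon)^{-1}(1-\epsilon)^\ell$ together with the envelope $P$ from Step~1, one obtains the stated bound term by term.

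The only real obstacle is the book-keeping verification that, for each of the two configurations of $\textbf{z}$ compatible with the hypotheses (namely $|z_1|=|z_3|=1$, where $(D)$ is automatic, and the distinguished case of Lemma~\ref{lem:6.13}), every monomial surviving in the $F_{ijk}$ indeed satisfies $|z_i^\ell z_j^m z_k^n|\le 1$ on $\ell\ge m\ge n\ge 0$, and that the quadratic polynomial $P$ produced by the doubly-degenerate expansion is dominated by the geometric damping $(1-\epsilon)^{2\ell}$ inside the weighted sum. Both facts are routine consequences of the explicit coefficient descriptions already obtained in Section~\ref{sec:4} and Lemma~\ref{lem:6.13}, so no new machinery is required beyond the observation that polynomial growth is absorbed by any genuinely geometric decay factor.
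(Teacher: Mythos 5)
Your proposal is correct and follows essentially the same route as the paper: appeal to Proposition~\ref{prop:a.3} and the weight computations of Section~\ref{sec:4} to obtain the quadratic polynomial envelope $|f_{\textbf{z}}(v_{\ell,m,n})|^2 w(v_{\ell,m,n}) \le (A\ell^2 + B\ell + C)^2$, then run the exponential-damping / Weyl-sequence argument of Proposition~\ref{prop:spectest} on $f_{\textbf{z}}^\epsilon = (1-\epsilon)^\ell f_{\textbf{z}}$. The remarks about verifying the two compatible configurations of $\textbf{z}$ are superfluous for this implication (condition~$(D)$ is assumed, so no enumeration is needed), but they do not affect the validity of the argument.
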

\begin{proof}By assumption and Proposition \ref{prop:a.3}, $|f(v_{\ell,m,n})|^2w(v_{\ell,m,n})\leq (A\ell^2+B\ell+C)^2$ for some $A,B,C>0$. Following the proof of Proposition \ref{prop:spectest}, we have Corollary \ref{coro:6.5}.
\end{proof}
From the above crucial observation, we may describe all elements in the automorphic simultaneous spectrum of $A_{w,i}$.


\appendix

\section{Simultaneous eigenfunctions via recurrence relation}\label{sec:appendix}

In this appendix, we give the proof of Theorem~\ref{thm:eigenftn} as well as some remarks on the multiple root cases.


\begin{thm}\label{thm:eigenftn(a)}
 Let $(\lambda_1,\lambda_2,\lambda_3)$ be an element of $\mathbb{C}^3$ indexed by the equation~(\ref{eq:para}) with respect to $(z_1,z_2,z_3,z_4)$. If $f$ is the corresponding simultaneous eigenfunction satisfying $A_{w,i}f=\lambda_if$, then  
$$f(v_{\ell,m,n})=\sum_{\tau\in S_4}C_{\tau(1)\tau(2)\tau(3)}\sqrt{q}^{3\ell+m-n}z_{\tau(1)}^\ell z_{\tau(2)}^m z_{\tau(3)}^n.$$
\end{thm}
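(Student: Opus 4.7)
The plan is to proceed by separation of variables after reducing to scalar recurrences. First I would combine the three eigenvalue equations at $v_{\ell,0,0}$ (Lemma~\ref{lem:4.6}) to eliminate the off-boundary values $f(v_{\ell,1,0})$ and $f(v_{\ell,1,1})$, producing the $4$-term recurrence \eqref{char poly} for $a_{\ell,0,0}=f(v_{\ell,0,0})/q^\ell$. Substituting the parametrization \eqref{eq:para} and using $z_1z_2z_3z_4=1$, the characteristic polynomial factors as $\prod_{i=1}^{4}(r-\sqrt{q}\,z_i)$. Under the standing assumption $z_i\neq z_j$ for $i\neq j$, this yields
\[
f(v_{\ell,0,0}) \;=\; \sum_{i=1}^{4} A_i\,\sqrt{q}^{\,3\ell}\,z_i^{\ell}
\]
for constants $A_i$, which must coincide with $\sum_{\tau:\,\tau(1)=i}C_{\tau(1)\tau(2)\tau(3)}$ in the claimed formula.

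Next I would extend to mixed coordinates. An analogous elimination from Lemma~\ref{lem:4.9} at $v_{\ell,m,0}$ yields a mixed recurrence whose general solution has the form $\sum_{i\neq j}A_{ij}\,\sqrt{q}^{\,3\ell+m}z_i^{\ell}z_j^{m}$. For interior vertices $v_{\ell,m,n}$ with $\ell>m>n>0$, Lemma~\ref{lem:4.11} shows that each individual monomial $\sqrt{q}^{\,3\ell+m-n}z_i^{\ell}z_j^{m}z_k^{n}$ (with $\{i,j,k,p\}=\{1,2,3,4\}$) already satisfies the interior eigenvalue equations: a direct substitution gives
\[
A_{w,1}\bigl(\sqrt{q}^{\,3\ell+m-n}z_i^{\ell}z_j^{m}z_k^{n}\bigr) \;=\; q\sqrt{q}\,\bigl(z_i+z_j+z_k+(z_iz_jz_k)^{-1}\bigr)\cdot\sqrt{q}^{\,3\ell+m-n}z_i^{\ell}z_j^{m}z_k^{n},
\]
and $(z_iz_jz_k)^{-1}=z_p$, so the bracket equals $\sigma_1(\mathbf{z})$, yielding $\lambda_1$; the same calculation works for $A_{w,2},A_{w,3}$. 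Hence
\[
f(v_{\ell,m,n}) \;=\; \sum_{\tau\in S_4}A_{\tau(1)\tau(2)\tau(3)}\,\sqrt{q}^{\,3\ell+m-n}\,z_{\tau(1)}^{\ell}z_{\tau(2)}^{m}z_{\tau(3)}^{n}
\]
for constants $A_{ijk}$ still to be determined.

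The main obstacle is pinning down the $A_{ijk}$'s by matching the boundary equations. At each wall ($n=0$, $m=n$, $\ell=m$), at each edge where two walls meet, and at the apex $v_{0,0,0}$, the formulas of Lemmas~\ref{lem:4.5}--\ref{lem:4.11} differ from the formal extension of Lemma~\ref{lem:4.11} by explicit correction terms that impose linear relations on the $A_{ijk}$. I would guess the closed form $A_{ijk}=C_{ijk}$ from the discrete Weyl-character heuristic---the factor $\prod(z_a-qz_b)/\prod(z_a-z_b)$ is the familiar Hecke symmetrizer for $PGL_4$---and then verify by a partial-fractions/residue computation that this $C_{ijk}$ satisfies every compatibility relation simultaneously. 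Each wall check reduces to an identity of rational symmetric functions in $(z_1,z_2,z_3,z_4)$ obtained by summing residues over $S_4$-orbits, which is the most tedious part of the argument. Finally, the non-generic cases with coincidences among the $z_i$ (producing the polynomial coefficients $D_{ijk}$ and $F_{ijk}$ in the Appendix) follow by taking limits of the generic formula, with the derivatives of Vandermonde-type factors accounting for the polynomial dependence on $(\ell,m,n)$.
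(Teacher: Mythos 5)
Your outline reproduces the same skeleton as the paper's Appendix~\ref{sec:appendix} argument: reduce to a scalar recurrence in $\ell$ along the wall $v_{\ell,0,0}$, then peel off the $m$- and $n$-dependence by nested separation of variables, and treat the confluent cases by L'H\^opital limits. The initial steps are correct (the $4$-term recurrence \eqref{char poly} factors as $\prod_i(r-\sqrt{q}\,z_i)$, each interior monomial $\sqrt{q}^{\,3\ell+m-n}z_i^{\ell}z_j^{m}z_k^{n}$ does satisfy the $\ell>m>n>0$ equations of Lemma~\ref{lem:4.11}, and the $f(v_{\ell,m,0})$ form you state matches what the paper derives).

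The substantive gap is that you defer the entire determination of the coefficients $C_{ijk}$ to a ``guess the Hecke-symmetrizer form and verify'' step that is never carried out. In the paper this is precisely where the real work lives: the values $a_{1,0,0},a_{2,0,0},a_{3,0,0}$ are computed from Lemmas~\ref{lem:4.5}--\ref{lem:4.6}, producing a Vandermonde system that yields $A_{i,0,0}$; then $A_{i,1,0},A_{i,2,0}$ are pinned down from the mixed wall equations, producing $B_{i,j,0}$; and finally $B_{i,j,1}$ comes from the $A_{w,2}$ equation at $v_{\ell,m,0}$, producing $C_{ijk}$. None of that is a ``check that the guessed answer satisfies identities''; it is a constructive solution of a triangular system of initial conditions, and your proposal does not supply a substitute for it. A second, smaller issue is the ``Hence $f(v_{\ell,m,n})=\sum_\tau A_{\tau}\cdots$'' step: knowing that $24$ monomials are formal solutions of the interior difference equations does not by itself show that the specific $f$ under consideration lies in their span. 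The paper closes this by deriving a \emph{finite-order} recurrence in $n$ for $B_{i,j,n}$ (with characteristic roots $z_k/q\sqrt{q},z_l/q\sqrt{q}$), so that the solution space matching the wall data is automatically two-dimensional in $n$ for each $(i,j)$. Your argument would need a comparable uniqueness/propagation claim to make the ``Hence'' legitimate. As written, the proposal is a sound outline of the paper's strategy but omits the computations that constitute the proof.
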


\begin{proof}

 By Lemma \ref{lem:4.6}, the eigenfunction $f=f_{\textbf{z}}$ of simultaneous eigenvalue $\lambda_i$ of $A_{w,i}$ satisfies that
\begin{equation}\nonumber
\begin{split}
&\lambda_1f(v_{\ell+1,0,0})=(q^3+q^2+q)f(v_{\ell+1,1,0})+f(v_{\ell+2,0,0})\\
&\lambda_2f(v_{\ell,0,0})=(q^4+q^3+q^2)f(v_{\ell,1,1})+(q^2+q+1)f(v_{\ell+1,1,0})\\
&\lambda_3f(v_{\ell-1,0,0})=q^3f(v_{\ell-2,0,0})+(q^2+q+1)f(v_{\ell,1,1}).
\end{split}
\end{equation}
Let $a_{\ell,m,n}=f(v_{\ell,m,n})/q^{\ell+m+n}.$ Using above equation, we have
\begin{equation}\label{eq:5.1}
\begin{split}
&\sqrt{q}\sigma_1(\textbf{z})a_{\ell+1,0,0}=(q^3+q^2+q)a_{\ell+1,1,0}+a_{\ell+2,0,0}\\
&{q}\sigma_2(\textbf{z})a_{\ell,0,0}=(q^5+q^4+q^3)a_{\ell,1,1}+(q^3+q^2+q)a_{\ell+1,1,0}\\
& q^{3/2}\sigma_3(\textbf{z})a_{\ell-1,0,0}=q^2a_{\ell-2,0,0}+(q^5+q^4+q^3)a_{\ell,1,1}.
\end{split}
\end{equation}
This implies that 
\begin{equation}\label{eq:5.111}
a_{\ell+2,0,0}-{q}^{1/2}\sigma_1(\textbf{z})a_{\ell+1,0,0}+q\sigma_2(\textbf{z})a_{\ell,0,0}-q^{3/2}\sigma_3(\textbf{z})a_{\ell-1,0,0}+q^2\sigma_4(\textbf{z})a_{\ell-2,0,0}=0.
\end{equation}

The sequence $a_{\ell,0,0}$ is of the form 
$$a_{\ell,0,0}=\sum_{i}A_{i,0,0}q^{\ell/2}z_i^\ell.$$
By Lemma \ref{lem:4.5} and Lemma \ref{lem:4.6}, 
\begin{equation}
\begin{split}\nonumber
a_{1,0,0}=&\frac{\lambda_1}{q(q^3+q^2+q+1)}=\frac{\sqrt{q}\sigma_1(\textbf{z})}{q^3+q^2+q+1}\\
a_{2,0,0}=&\frac{\lambda_1}{q}a_{1,0,0}-(q^3+q^2+q)a_{1,1,0}=\frac{q\sigma_1(\textbf{z})^2}{q^3+q^2+q+1}-\frac{q\sigma_2(\textbf{z})}{q^2+1}\\
a_{3,0,0}=&\frac{\lambda_1}{q}a_{2,0,0}-(q^3+q^2+q)a_{2,1,0}=\frac{\lambda_1}{q}a_{2,0,0}-\frac{\lambda_2}{q}{a_{1,0,0}}+(q^5+q^4+q^3)a_{1,1,1}\\
=&\frac{q^{3/2}\sigma_1(\textbf{z})^3}{q^3+q^2+q+1}-\frac{q^{3/2}\sigma_1(\textbf{z})\sigma_2(\textbf{z})}{q^2+1}-\frac{q^{3/2}\sigma_1(\textbf{z})\sigma_2(\textbf{z})}{q^3+q^2+q+1}+\frac{q^{3/2}(q^2+q+1)\sigma_3(\textbf{z})}{q^3+q^2+q+1}.
\end{split}
\end{equation}
Since 
\begin{equation}
\begin{split}\nonumber
&A_{1,0,0}+A_{2,0,0}+A_{3,0,0}+A_{4,0,0}=1\\
&z_1A_{1,0,0}+z_2A_{2,0,0}+z_3A_{3,0,0}+z_4A_{4,0,0}=\frac{a_{1,0,0}}{\sqrt{q}}\\
&z_1^2A_{1,0,0}+z_2^2A_{2,0,0}+z_3^2A_{3,0,0}+z_4^2A_{4,0,0}=\frac{a_{2,0,0}}{q}\\
&z_1^3A_{1,0,0}+z_2^3A_{2,0,0}+z_3^3A_{3,0,0}+z_4^3A_{4,0,0}=\frac{a_{3,0,0}}{q\sqrt{q}},
\end{split}
\end{equation}
we have $$A_{i,0,0}=\frac{(z_i-qz_k)(z_i-qz_j)(z_i-qz_l)}{(z_i-z_k)(z_i-z_j)(z_i-z_l)(q^3+q^2+q+1)},$$
where $i,j,k,l$ are distinct numbers in $\{1,2,3,4\}.$
 From the first and the third equation of \eqref{eq:4,8}, we have
 \begin{equation}\nonumber
 \begin{split}
 &\frac{\lambda_1}{q}a_{\ell+1,m+1,0}=(q^3+q^2)a_{\ell+1,m+1,1}+qa_{\ell+1,m+2,0}+a_{\ell+2,m+1,0}\\
 &\frac{\lambda_3}{q}a_{\ell,m,0}=qa_{\ell-1,m,}+a_{\ell,m-1,0}+(q^3+q^2)a_{\ell+1,m+1,1}.
  \end{split}
 \end{equation}
This implies that  
 \begin{equation}\label{eq:5.2}
 \begin{split}
 a_{\ell+2,m+1,0}+qa_{\ell+1,m+2,0}-\frac{\lambda_1}{q}a_{\ell+1,m+1,0}+\frac{\lambda_3}{q}a_{\ell,m,0}-qa_{\ell-1,m,0}-a_{\ell,m-1,0}=0.
  \end{split}
 \end{equation}
  Denote
 \begin{equation}\nonumber
 a_{\ell,m,0}=A_{1,m,0}(\sqrt{q}z_1)^\ell+A_{2,m,0}(\sqrt{q}z_2)^\ell+A_{3,m,0}(\sqrt{q}z_3)^\ell+A_{4,m,0}(\sqrt{q}z_4)^\ell.
 \end{equation}
 By \eqref{eq:5.2}, $A_{i,m,0}$ satisfies that 
 $$qA_{i,m+1,0}z_i^{2}+q^{3/2}A_{i,m+2,0}z_i-\frac{\lambda_1}{\sqrt{q}}A_{i,m+1,0}z_i+\frac{\lambda_3}{q}A_{i,m,0}-\sqrt{q}A_{i,m,0}z_i^{-1}-A_{i,m-1,0}=0.$$
 For any distinct numbers $i,j,k,l\in\{1,2,3,4\},$ we have
  $$q^{3/2}A_{i,m+2,0}-q(z_j+z_k+z_l)A_{i,m+1,0}+\sqrt{q}(z_jz_k+z_kz_l+z_lz_j)A_{i,m,0}-A_{i,m-1,0}z_i^{-1}=0.$$
Thus the sequence $A_{i,m,0}$ is of the form
$$A_{i,m,0}=B_{i,j,0}\biggl(\frac{z_j}{\sqrt{q}}\biggr)^m+B_{i,k,0}\biggl(\frac{z_k}{\sqrt{q}}\biggr)^m+B_{i,l,0}\biggl(\frac{z_l}{\sqrt{q}}\biggr)^m.$$
Applying the first equation of \eqref{eq:5.1}, we have $A_{i,1,0}:=\frac{z_j+z_k+z_l}{\sqrt{q}(q^2+q+1)}A_{i,0,0}.$ 
By the third equation of \eqref{eq:4.5} and the first equation of \eqref{eq:4.8},
we have 
\begin{equation}
\begin{split}\nonumber
&\frac{\lambda_3}{q^2}a_{1,0,0}=a_{0,0,0}+(q^3+q^2+q)a_{2,1,1}\\
&\frac{\lambda_1}{q}a_{2,1,0}=(q^3+q^2)a_{2,1,1}+qa_{2,2,0}+a_{3,1,0}.
\end{split}
\end{equation}
This implies that 
$$a_{2,2,0}=\frac{\lambda_1}{q^2}a_{2,1,0}-\frac{\lambda_3(q+1)}{q^4+q^3+q^2}a_{1,0,0}+\frac{q+1}{q^2+q+1}a_{0,0,0}-\frac{1}{q}a_{3,1,0}$$
For any distinct numbers $i,j,k,l\in \{1,2,3,4\}$, the above equation shows that 
\begin{equation}
\begin{split}\nonumber
qz_i^2A_{i,2,0}=\frac{\lambda_1}{q}z_i^2A_{i,1,0}-\frac{\lambda_3(q+1)}{q\sqrt{q}(q^2+q+1)}z_iA_{i,0,0}+\frac{q+1}{q^2+q+1}A_{i,0,0}-\sqrt{q}z_i^3A_{i,1,0}.
\end{split}
\end{equation}
For any distinct numbers $i,j,k,l\in\{1,2,3,4\},$
\begin{equation}
\begin{split}\nonumber
qA_{i,2,0}=&\frac{\lambda_1}{q}A_{i,1,0}-\frac{\lambda_3(q+1)}{q\sqrt{q}(q^2+q+1)}z_i^{-1}A_{i,0,0}+\frac{q+1}{q^2+q+1}z_i^{-2}A_{i,0,0}-\sqrt{q}z_iA_{i,1,0}\\
=&\biggl\{\frac{(z_i+z_j+z_k+z_l)(z_i+z_j+z_l)}{q^2+q+1}-\frac{(q+1)(z_jz_k+z_jz_l+z_kz_l+z_i^{-2})}{q^2+q+1}\\
&+\frac{q+1}{q^2+q+1}z_i^{-2}-\frac{z_i(z_j+z_k+z_l)}{q^2+q+1}\biggr\}A_{i,0,0}\\
=&\frac{(z_j+z_k+z_l)^2-(q+1)(z_jz_k+z_jz_l+z_kz_l)}{q^2+q+1}A_{i,0,0}.
\end{split}
\end{equation}
The second equation above follows from $A_{i,1,0}:=\frac{z_j+z_k+z_l}{\sqrt{q}(q^2+q+1)}A_{i,0,0}$ and $z_1z_2z_3z_4=1$.
Since 
\begin{equation}
\begin{split}
&B_{i,j,0}+B_{i,k,0}+B_{i,l,0}=A_{i,0,0}\\
&z_jB_{i,j,0}+z_kB_{i,k,0}+z_lB_{i,l,0}=\sqrt{q}A_{i,1,0}\\
&z_j^2B_{i,j,0}+z_k^2B_{i,k,0}+z_l^2B_{i,l,0}=qA_{i,2,0},
\end{split}
\end{equation}
we have that 
$$B_{i,j,0}=\frac{(z_j-qz_k)(z_j-qz_l)}{(z_j-z_k)(z_j-z_l)(q^2+q+1)}A_{i,0,0}.$$
Denote $A_{i,m,n}=\sum_{i\neq j} B_{i,j,n}\sqrt{q}^{-m}z_j^m.$ 
It follows from the first equation of Lemma~\ref{lem:4.11} that
$$\lambda_1a_{\ell,m,n}=a_{\ell-1,m-1,n-1}+q^3a_{\ell,m,n+1}+q^2a_{\ell,m+1,n}+qa_{\ell+1,m,n}.$$
The above equation shows that for any distinct numbers $i,j,k,l\in \{1,2,3,4\}$, 
$$\lambda_1B_{i,j,n}=B_{i,j,n-1}z_i^{-1}z_j^{-1}+q^3B_{i,j,n+1}+q^{3/2}B_{i,j,n}z_j+q^{3/2}B_{i,j,n}z_i.$$
Since $z_1z_2z_3z_4=1$, 
$$q^3B_{i,j,n+1}-q^{3/2}(z_k+z_l)B_{i,j,n}+z_kz_lB_{i,j,n-1}=0.$$
Thus the sequence $B_{i,j,n}$ is of the form
$$B_{i,j,n}=C_{ijk}\biggl(\frac{z_k}{q\sqrt{q}}\biggr)^n+C_{i,j,l}\biggl(\frac{z_l}{q\sqrt{q}}\biggr)^n.$$
By the second equation of \eqref{eq:4.8}, we have
$$\frac{\lambda_2}{q^2}a_{\ell,m,0}=a_{\ell-1,m-1,0}+(q^3+q^2)a_{\ell,m+1,1}+(q^2+q)a_{\ell+1,m,1}+a_{\ell+1,m+1,0}.$$
Using this, we have for any distinct $i,j,k,l\in\{1,2,3,4\}$
$$\sigma_2(\textbf{z})B_{i,j,0}=z_kz_lB_{i,j,0}+q^{3/2}(q^2+q)(z_i+z_j)B_{i,j,1}+z_iz_jB_{i,j,0}.$$
Thus we have 
$$B_{i,j,1}=\frac{z_k+z_l}{q^{3/2}(q+1)}B_{i,j,0}.$$
Since 
$C_{ijk}+C_{i,j,l}=B_{i,j,0}$ and $z_kC_{ijk}+z_lC_{i,j,l}=q^{3/2}B_{i,j,1},$ we have $$C_{ijk}=\frac{z_k-qz_l}{(z_k-z_l)(q+1)}B_{i,j,0}$$
and
$$a_{\ell,m,n}=\sum_{\tau\in z_4}C_{\tau(1),\tau(2),\tau(3)}\sqrt{q}^{\ell-m-3n}z_{\tau(1)}^\ell z_{\tau(2)}^mz_{\tau(3)}^n.$$
This completes the proof of the Theorem.
\end{proof}

Now let us find eigenfunctions when the characteristic equation \eqref{eq:5.111} has multiple root.
\begin{prop}\label{prop:a.2}
Suppose that $z_1=z_2$ and $z_1$, $z_3$ and $z_4$ are distinct. The eigenfunction of simultaneous eigenvalues $(\lambda_1,\lambda_2,\lambda_3)$ indexed by $(z_1,z_2,z_3,z_4)$ is 
\begin{equation}\label{eq:5.5}
\begin{split}
f(v_{\ell,m,n})=\sqrt{q}^{3\ell+m-n}\sum_{\substack{(i,j)\in\{3,4\}^2\\i\neq j}}&\left(D_{11i}z_1^{\ell+m}z_i^n
+D_{1i1}z_1^{\ell+n}z_i^m+D_{1ij}z_1^\ell z_i^mz_j^n+D_{i11}z_i^\ell z_1^{m+n}\right.\\
&\left.+D_{i1j}z_i^\ell z_1^m z_j^n+D_{ij1}z_i^\ell z_j^mz_1^n\right),\\
\end{split}
\end{equation}
where $D_{ijk}(\ell,m,n)$ are polynomials defined by
\begin{equation}
\begin{split}\nonumber
D_{11i}=&\frac{(-1)^i(z_1-qz_3)^2(z_1-qz_4)^2(z_i-qz_j)\{q+1+(\ell-m)(1-q)\}}{(z_1-z_3)^2(z_1-z_4)^2(z_3-z_4)(q+1)(q^2+q+1)(q^3+q^2+q+1)}\\
D_{1i1}=&\frac{(-1)^j(z_1-qz_j)^2(z_i-qz_j)}{(z_1-z_3)^2(z_1-z_4)^2(z_3-z_4)(q+1)(q^2+q+1)(q^3+q^2+q+1)}\\
&\times [\{q+1+(\ell-n)(1-q)\}(z_1-qz_i)(z_i-qz_1)+(1-q)(1-q^2)z_1z_i]\\
D_{1ij}=&\frac{(-1)^i(z_i-qz_j)}{(z_1-z_3)^2(z_1-z_4)^2(z_3-z_4)(q+1)(q^2+q+1)(q^3+q^2+q+1)}\\& \times [\{q+1+\ell(1-q)\}(z_1-qz_3)(z_3-qz_1)(z_1-qz_4)(z_4-qz_1)\\
&+(1-q)(1-q^2)\{(z_1-qz_3)(z_3-qz_1)z_1z_4+(z_1-qz_4)(z_4-qz_1)z_1z_3\}]
\end{split}
\end{equation}
\begin{equation}
\begin{split}\nonumber
D_{i11}=&\frac{(-1)^i(z_i-qz_1)^2(z_1-qz_j)^2(z_i-qz_j)[q+1+(m-n)(1-q)]}{(z_1-z_3)^2(z_1-z_4)^2(z_3-z_4)(q+1)(q^2+q+1)(q^3+q^2+q+1)}\\
D_{i1j}=&\frac{(-1)^j(z_i-qz_1)^2(z_i-qz_j)}{(z_1-z_3)^2(z_1-z_4)^2(z_3-z_4)(q+1)(q^2+q+1)(q^3+q^2+q+1)}\\
&\times[\{1+q+m(1-q)\}(z_1-qz_j)(z_j-qz_1)+(1-q)(1-q^2)z_1z_j]\\
D_{ij1}=&\frac{(-1)^i(z_3-qz_1)^2(z_4-qz_1)^2(z_i-qz_j)[q+1+n(1-q)]}{(z_1-z_3)^2(z_1-z_4)^2(z_3-z_4)(q+1)(q^2+q+1)(q^3+q^2+q+1)}.
\end{split}
\end{equation}
\end{prop}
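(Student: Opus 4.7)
My plan is to derive the formula by performing a confluent limit $z_2 \to z_1$ inside the generic formula of Theorem~\ref{thm:eigenftn(a)}, rather than by rerunning the three-step recurrence argument from scratch. Each summand $C_{\tau(1)\tau(2)\tau(3)}\,\sqrt{q}^{3\ell+m-n}\,z_{\tau(1)}^\ell z_{\tau(2)}^m z_{\tau(3)}^n$ is a rational function of $(z_1,z_2,z_3,z_4)$ with a simple pole along $\{z_1 = z_2\}$, but the total sum $f_{\mathbf{z}}(v_{\ell,m,n})$ is regular there, since $f$ is the unique solution of the recurrences of Section~\ref{sec:4} with the prescribed value $f(v_{0,0,0})$, and those recurrences have coefficients that do not depend on the $z_r$. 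Grouping the $24$ permutations of $S_4$ into $12$ confluent pairs and taking the limit $z_2\to z_1$ of each pair should therefore produce exactly the $12$ monomial terms in the claimed formula, with coefficients that must be the stated polynomials $D_{ijk}$.

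First, I would organize the pairing: each confluent pair consists of the two permutations differing only in which of the two repeated arguments is called $z_1$ and which is called $z_2$. Setting $z_2 = z_1 + \varepsilon$ and Taylor-expanding each $C$ to first order in $\varepsilon$, the leading $1/\varepsilon$ singularities cancel, because $C_{\sigma\tau\rho}$ and $C_{\tau\sigma\rho}$ differ by a sign coming from swapping $z_1$ and $z_2$ in the Vandermonde-like denominator of the $C$'s. The surviving $O(1)$ coefficient then picks up a term linear in one of the differences $\ell - m$, $\ell - n$, or $m - n$, coming from the derivative $\partial_\varepsilon z_2^\ell |_{\varepsilon = 0} = \ell z_1^{\ell - 1}$. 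This is the origin of the linear factors $\{q+1+(\ell-m)(1-q)\}$, $\{q+1+(\ell-n)(1-q)\}$, $\{q+1+(m-n)(1-q)\}$ and $\{q+1+n(1-q)\}$ appearing in $D_{11i}$, $D_{1i1}$, $D_{i11}$, $D_{ij1}$ respectively; the extra constants $(1-q)(1-q^2) z_1 z_j$ etc. in $D_{1i1}$, $D_{1ij}$, $D_{i1j}$ come from simultaneously expanding the two $\varepsilon$-dependent factors $(z_1 - q z_2)$ and $(z_2 - q z_1)$ in the numerator, which contribute the derivatives at $\varepsilon = 0$.

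Second, rather than explicitly computing every limit, I would verify the proposed formula by direct substitution into the eigenvalue equations for $A_{w,1}, A_{w,2}, A_{w,3}$ at the special cells $v_{\ell,0,0}$, $v_{\ell,\ell,0}$, $v_{\ell,\ell,\ell}$, $v_{\ell,m,0}$, $v_{\ell,m,m}$, $v_{\ell,\ell,m}$, $v_{\ell,m,n}$, using Lemmas~\ref{lem:4.5}--\ref{lem:4.11}. Since the generic formula of Theorem~\ref{thm:eigenftn(a)} solves these relations identically in the $z_r$, the limit necessarily does too; so it is enough to match the initial values at $v_{0,0,0}$, $v_{1,0,0}$, $v_{1,1,0}$, $v_{1,1,1}$ against the values predicted by Lemmas~\ref{lem:4.5}--\ref{lem:4.7}, which then pins down all the $D_{ijk}$ by uniqueness of the solution of the recurrences.

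The main obstacle is the combinatorial bookkeeping: carefully organizing the signs $(-1)^i$, the symmetry between the $(11i)$ and $(i11)$ families, and the order of expansion of the two $\varepsilon$-dependent factors $(z_1 - q z_2)$ and $(z_2 - q z_1)$ in the three pairs producing $D_{1i1}$, $D_{1ij}$, and $D_{i1j}$ (where each factor contributes a linear term, so both orders of derivative need to be tracked). Once the confluent limits are set up cleanly, the remaining work is algebraic simplification, which is routine but lengthy.
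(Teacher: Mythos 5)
Your proposal takes essentially the same route as the paper's proof: the paper also groups the $24$ summands of Theorem~\ref{thm:eigenftn(a)} into the $12$ pairs you describe and computes $\lim_{z_2\to z_1}\bigl(C_{\cdot 2\cdot}\,+\,C_{\cdot 1\cdot}\bigr)$ term by term via L'H\^opital's rule, yielding the stated $D_{ijk}$. The only cosmetic difference is that you propose to sidestep the explicit limit computations by a continuity-plus-initial-conditions argument, whereas the paper simply records the pairwise limits directly; both are instances of the same confluent-limit idea.
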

\begin{proof}
Since for any $\ell\geq m\geq n$, 
\begin{equation}
\begin{split}\nonumber
&\lim_{z_2\rightarrow z_1}C_{12i}z_1^\ell z_2^mz_i^n+C_{21i}z_2^\ell z_1^m z_i^n=D_{11i}z_1^{\ell+m}z_i^n\\
&\lim_{z_2\rightarrow z_1}C_{1i2}z_1^\ell z_i^mz_2^n+C_{2i1}z_2^\ell z_i^m z_1^n=D_{1i1}z_1^{\ell+n}z_i^m\\
&\lim_{z_2\rightarrow z_1}C_{1ij}z_1^\ell z_i^mz_j^n+C_{2ij}z_2^\ell z_i^m z_j^n=D_{1ij}z_1^\ell z_i^mz_j^n\\
&\lim_{z_2\rightarrow z_1} C_{i12}z_i^\ell z_1^mz_2^n+C_{i21}z_i^\ell z_2^m z_1^n=D_{i11}z_i^\ell z_1^{m+n}\\
&\lim_{z_2\rightarrow z_1}C_{i1j}z_i^\ell z_1^m z_j^n+C_{i2j}z_i^\ell z_2^m z_j^n=D_{i1j}z_i^\ell z_1^m z_j^n\\
&\lim_{z_2\rightarrow z_1} C_{ij1}z_i^\ell z_j^mz_1^n+C_{ij2}z_i^\ell z_j^m z_2^n=D_{ij1}z_i^\ell z_j^mz_1^n\\
\end{split}
\end{equation}
by L'H\^opital's law,
we have \eqref{eq:5.5}.
\end{proof}
\begin{prop}\label{prop:a.3}Suppose that $z_1=z_2$,$z_3=z_4$ and $z_1$ and $z_3$ are distinct. The eigenfunction of simultaneous eigenvalues $(\lambda_1,\lambda_2,\lambda_3)$ indexed by $(z_1,z_2,z_3,z_4)$ is 
\begin{equation}\label{eq:5.6}
\begin{split}
f(v_{\ell,m,n})=\sqrt{q}^{3\ell+m-n}&\sum_{\substack{(i,j)\in\{1,3\}^2\\ i\neq j}}\left(F_{iij}z_i^{\ell+m}z_j^n
+F_{iji}z_i^{\ell+n}z_j^m+F_{ijj}z_i^\ell z_j^{m+n}\right),\\
\end{split}
\end{equation}
where
\begin{equation}\begin{split}\nonumber
F_{iij}=&\frac{(z_i-qz_j)^4\{1+q+(\ell-m)(1-q)\}\{1+q+n(1-q)\}}{(z_1-z_3)^4(q+1)(q^2+q+1)(q^3+q^2+q+1)}\\
F_{131}=&\frac{-(z_1-qz_3)^3(z_3-qz_1)\{1+q+(\ell-n)(1-q)\}\{1+q+m(1-q)\}}{(z_1-z_3)^4(q+1)(q^2+q+1)(q^3+q^2+q+1)}\\
&-\frac{(1-q)(1-q^2)(z_1-qz_3)^2\{2+2q+(\ell+m-n+1)(1-q)\}z_1z_3}{(z_1-z_3)^4(q+1)(q^2+q+1)(q^3+q^2+q+1)}\\
&-\frac{2(1-q)^2(1-q^2)(z_1-qz_3)z_1z_3^2}{(z_1-z_3)^4(q+1)(q^2+q+1)(q^3+q^2+q+1)}
\end{split}
\end{equation}
\begin{equation}
\begin{split}\nonumber
F_{313}=&\frac{-(z_3-qz_1)^3(z_1-qz_3)\{1+q+(\ell-n)(1-q)\}\{1+q+m(1-q)\}}{(z_1-z_3)^4(q+1)(q^2+q+1)(q^3+q^2+q+1)}\\
&-\frac{(1-q)(1-q^2)(z_3-qz_1)^2\{2+2q+(\ell+m-n-1)\}z_1z_3}{(z_1-z_3)^4(q+1)(q^2+q+1)(q^3+q^2+q+1)}\\
&-\frac{2(1-q)^2(1-q^2)(z_3-qz_1)z_1z_3^2}{(z_1-z_3)^4(q+1)(q^2+q+1)(q^3+q^2+q+1)}\\
F_{ijj}=&\frac{(z_1-qz_3)^2(z_3-qz_1)^2\{1+q+\ell(1-q)\}\{1+q+(m-n)(1-q)\}}{(z_1-z_3)^4(q+1)(q^2+q+1)(q^3+q^2+q+1)}\\
&+\frac{2(1-q)(1-q^2)(z_1-qz_3)(z_3-qz_1)\{1+q+(m-n)(1-q)\}z_1z_3}{(z_1-z_3)^4(q+1)(q^2+q+1)(q^3+q^2+q+1)}.\\
\end{split}
\end{equation}
\end{prop}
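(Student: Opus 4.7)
The plan is to derive formula \eqref{eq:5.6} from Proposition~\ref{prop:a.2} by taking the limit $z_4 \to z_3$, just as Proposition~\ref{prop:a.2} was derived from Theorem~\ref{thm:eigenftn(a)} by letting $z_2 \to z_1$. Since the simultaneous eigenfunction for a given tuple $(\lambda_1,\lambda_2,\lambda_3)$ is determined by the recurrence relations in Section~\ref{sec:4} together with the boundary values $f(v_{0,0,0}),f(v_{1,0,0}),f(v_{1,1,0}),f(v_{1,1,1})$, and since these recurrences and boundary values depend continuously on the parameter $\mathbf{z}$, the eigenfunction itself is a continuous function of $\mathbf{z}$. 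Hence it suffices to evaluate $\lim_{z_4\to z_3} f(v_{\ell,m,n})$ in \eqref{eq:5.5}, grouping terms of the same exponent pattern in $z_1,z_3$.

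First, I would organize the six types of terms appearing in \eqref{eq:5.5} according to which of them degenerate into the same monomial $z_1^a z_3^b$ after the substitution $z_4=z_3$. For instance, $D_{113}z_1^{\ell+m}z_3^n$ and $D_{114}z_1^{\ell+m}z_4^n$ collapse to a single term with exponent $z_1^{\ell+m}z_3^n$, so one must compute
\[
\lim_{z_4\to z_3}\bigl(D_{113}(\ell,m,n) + D_{114}(\ell,m,n)\,(z_4/z_3)^n\bigr).
\]
Each such pair has a pole of order $2$ or less at $z_4=z_3$ coming from the factor $(z_3-z_4)$ in the denominator of the $D$'s, and expanding $(z_4/z_3)^n = 1 + n(z_4-z_3)/z_3 + O((z_4-z_3)^2)$ produces the additional linear-in-$n$ polynomial factor. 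Carrying out the expansion with L'H\^opital's rule, or equivalently a first-order Taylor expansion of the numerator of $D_{113}+D_{114}$ in $(z_4-z_3)$, yields the coefficient $F_{113}$ up to the prefactor $(1+q+n(1-q))$; the symmetric analogue with $1\leftrightarrow 3$ swapped gives $F_{331}$.

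Next I would handle the pairs producing $F_{iji}$ and $F_{ijj}$. The terms $D_{131}z_1^{\ell+n}z_3^m$ and $D_{141}z_1^{\ell+n}z_4^m$ combine into a single limit producing $F_{131}z_1^{\ell+n}z_3^m$, and the analogous four-term grouping $(D_{134},D_{143},D_{314},D_{341},D_{341},D_{431})$ combine to give the coefficients $F_{133}$ and $F_{311}$ of $z_1^\ell z_3^{m+n}$ and $z_3^\ell z_1^{m+n}$. The main computational burden lies precisely here: several of these combinations exhibit a second-order cancellation at $z_4=z_3$, so one has to keep two terms of the Taylor expansion to obtain a nonzero finite limit, which is why expressions such as $F_{131}$ contain three summands carrying factors of $(1-q)^k z_1z_3^{4-k}$ arising from successive derivatives. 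It will be cleanest to write the numerators of the $D_{ijk}$ as polynomials in $(z_4-z_3)$ centered at $z_3$, then divide by $(z_3-z_4)$ and set $z_4=z_3$, identifying constant, linear, and quadratic parts of the expansion.

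The principal obstacle is keeping the algebraic bookkeeping honest: there are six distinct polynomial coefficients in \eqref{eq:5.5}, each a rational function of $\mathbf{z}$ with $(z_3-z_4)$ in the denominator, and the correct regrouping must pair them so that these poles cancel. Once the regrouping is fixed, checking agreement with the claimed $F_{iij},F_{iji},F_{ijj}$ is a direct (if tedious) computation that splits into evaluating the constant, linear-in-$(\ell,m,n)$, and bilinear-in-$(\ell,m,n)$ pieces separately. As a sanity check, one can verify the resulting $f$ directly against the recurrences of Lemmata~\ref{lem:4.5}--\ref{lem:4.11} at, say, $(v_{0,0,0})$ and $(v_{1,1,0})$; by the uniqueness argument given at the outset, agreement at the boundary values forces agreement everywhere, so Proposition~\ref{prop:a.3} follows.
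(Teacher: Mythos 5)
Your proposal is correct and matches the paper's own proof: the paper likewise obtains Proposition~\ref{prop:a.3} by pairing the six monomial types in~\eqref{eq:5.5} according to which coincide at $z_4=z_3$ and applying L'H\^opital's rule to each pair (e.g.\ $D_{113}+D_{114}\to F_{113}$, $D_{131}+D_{141}\to F_{131}$, etc.). Your added remarks on continuity/uniqueness of the eigenfunction give a clean justification for passing to the limit, though note your verbal listing of the ``four-term grouping'' garbles the bookkeeping slightly — the pairs are $(D_{134},D_{143})\to F_{133}$, $(D_{311},D_{411})\to F_{311}$, $(D_{314},D_{413})\to F_{313}$, $(D_{341},D_{431})\to F_{331}$.
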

\begin{proof}Since for any $\ell\geq m\geq n,$
\begin{equation}
\begin{split}\nonumber
&\lim_{z_4\rightarrow z_3}D_{113}z_1^{\ell+m}z_3^n+D_{114}z_1^{\ell+m}z_4^n=F_{113}z_1^{\ell+m}z_3^n\\
&\lim_{z_4\rightarrow z_3}D_{131}z_1^{\ell+n}z_3^m+D_{141}z_1^{\ell+n}z_4^m=F_{131}z_1^{\ell+n}z_3^m\\
&\lim_{z_4\rightarrow z_3}D_{133}z_1^{\ell}z_3^{m+n}+D_{144}z_1^{\ell}z_4^{m+n}=F_{133}z_1^{\ell}z_3^{m+n}\\
&\lim_{z_4\rightarrow z_3}D_{311}z_1^{m+n}z_3^\ell+D_{411}z_1^{m+n}z_4^\ell=F_{311}z_1^{m+n}z_3^\ell\\
&\lim_{z_4\rightarrow z_3}D_{313}z_1^{m}z_3^{\ell+n}+D_{414}z_1^{m}z_4^{\ell+n}=F_{313}z_1^{m}z_3^{\ell+n}\\
&\lim_{z_4\rightarrow z_3}D_{341}z_1^{n}z_3^{\ell+m}+D_{441}z_1^{n}z_4^{\ell+n}=F_{431}z_1^{n}z_3^{\ell+m}\\
\end{split}
\end{equation}
by L'H\^opital's law, we have \eqref{eq:5.6}.
\end{proof}

\section{The proof of Lemma \ref{lem:6.77}}\label{ap:b}
In this section, we prove Lemma \ref{lem:6.77} for the case when $z_i\neq z_j$ since the proof is similar to the case when $z_i=z_j$ for some $i,j$. 

If a 3-tuple $(\lambda_1,\lambda_2,\lambda_3)$ is a simultaneous eigenvalue of $A_{w,i}$, there exists a function $h\in L^2(\Gamma\backslash\mathcal{B}_4)$ with $\|h\|_{2,w}=1$ such that for any $i\in\{1,2,3\}$, the function 
$$\delta_i(v_{\ell,m,n}):=A_{w,i}h(v_{\ell,m,n})-\lambda_ih(v_{\ell,m,n})$$ satisfies $\|\delta_i\|_{2,w}<\epsilon$. Let $f$ be a simultaneous eigenfunction corresponding to $(\lambda_1,\lambda_2,\lambda_3)$ with $f(v_{0,0,0})=h(v_{0,0,0}).$ Denote $$a_{\ell,m,n}=h(v_{\ell,m,n})-f(v_{\ell,m,n}).$$
The sequence $a_{\ell,m,n}$ is different from the sequence in Appendix \ref{sec:appendix}. 

Since $A_if(v_{\ell,m,n})=\lambda_if(v_{\ell,m,n})$ for any $i$, we have
\begin{equation}\label{eq:b.1}
\delta_i(v_{\ell,m,n})=A_{w,i}a_{\ell,m,n}-\lambda_ia_{\ell,m,n}.
\end{equation}
This implies that
\begin{equation*}
\begin{split}
a_{1,0,0}&=\frac{\delta_1(v_{0,0,0})}{q^3+q^2+q+1},\quad a_{1,1,0}=\frac{\delta_2(v_{0,0,0})}{q^4+q^3+2q^2+q+1},\quad a_{1,1,1}=\frac{\delta_3(v_{0,0,0})}{q^3+q^2+q+1}.
\end{split}
\end{equation*}
 Using \eqref{eq:b.1}, the first and the second equation of \eqref{eq:4.5}, we obtain
\begin{equation*}
\begin{split}
\delta_1(v_{1,0,0})&=(q^3+q^2+q)a_{1,1,0}+a_{2,0,0}-\lambda_1a_{1,0,0}\\
\delta_2(v_{1,0,0})&=(q^4+q^3+q^2)a_{1,1,1}+(q^2+q+1)a_{2,1,0}-\lambda_2a_{1,0,0}.
\end{split}
\end{equation*}
This shows that
\begin{equation}\label{bb}
\begin{split}
a_{2,0,0}&=\delta_1(v_{1,0,0})+\frac{\lambda_1\delta_1(v_{0,0,0})}{q^3+q^2+q+1}-\frac{q\delta_2(v_{0,0,0})}{q^2+1}\\
a_{2,1,0}&=\frac{\lambda_2\delta_1(v_{0,0,0})}{(q^3+q^2+q+1)(q^2+q+1)}+\frac{\delta_2(v_{1,0,0})}{q^2+q+1}-\frac{q^2\delta_3(v_{0,0,0})}{q^3+q^2+q+1}.
\end{split}
\end{equation}
Simliarly, we have
\begin{equation*}
\begin{split}
\delta_1(v_{2,0,0})=&(q^3+q^2+q)a_{2,1,0}+a_{3,0,0}-\lambda_1a_{2,0,0}\\
\end{split}
\end{equation*}
It follows from \eqref{bb} and the above equation that
\begin{equation*}
\begin{split}
a_{3,0,0}&=\delta_1(v_{2,0,0})+\lambda_1\delta_1(v_{1,0,0})+\frac{(\lambda_1^2-q\lambda_2)\delta_1(v_{0,0,0})}{q^3+q^2+q+1}\\
&-q\delta_2(v_{1,0,0})-\frac{q\lambda_1\delta_2(v_{0,0,0})}{q^2+1}+\frac{q^3(q^2+q+1)\delta_3(v_{0,0,0})}{q^3+q^2+q+1}.
\end{split}
\end{equation*}
By definition of $\delta_i$ and the equation \eqref{eq:4.5}, we have
\begin{equation*}
\begin{split}
&a_{\ell+2,0,0}+(q^3+q^2+q)a_{\ell+1,1,0}-\lambda_1a_{\ell+1,0,0}=\delta_1(v_{\ell+1,0,0})\\
&-(q^5+q^4+q^3)a_{\ell,1,1}-(q^3+q^2+q)a_{\ell+1,1,0}+q\lambda_2a_{\ell,0,0}=-q\delta_2(v_{\ell,0,0})\\
&q^6a_{\ell-2,0,0}+(q^5+q^4+q^3)a_{\ell,1,1}-q^3\lambda_3a_{\ell-1,0,0}=q^3\delta_3(v_{\ell-1,0,0}).
\end{split}
\end{equation*}
The sum of three equations is  
\begin{equation*}
\begin{split}
&a_{\ell+2,0,0}-\lambda_1a_{\ell+1,0,0}+q\lambda_2a_{\ell,0,0}-q^3\lambda_3a_{\ell-1,0,0}+q^6a_{\ell-2,0,0}\\
&=\delta_1(v_{\ell+1,0,0})-q\delta_2(v_{\ell,0,0})+q^3\delta_3(v_{\ell-1,0,0}).
\end{split}
\end{equation*}
Let $\{\alpha_n\}_n$ be a sequence defined by $\alpha_{-2}=\alpha_{-1}=\alpha_{0}=0$, $\alpha_1=1$ and
$$\alpha_n=\lambda_1\alpha_{n-1}-q\lambda_2\alpha_{n-2}+q^3\lambda_3\alpha_{n-3}-q^6\alpha_{n-4}$$
for any $n\geq 2.$
The characteristic equation of $\alpha_n$ shows that if $z_i\neq z_j$ for any $i,j$
$$\frac{\alpha_n}{q^{3n/2}}=X_1z_1^{n-1}+X_2z_2^{n-1}+X_3z_3^{n-1}+X_4z_4^{n-1}.$$
Here $X_i=\frac{z_i^3}{(z_i-z_j)(z_i-z_k)(z_i-z_l)}$ where $i,j,k,l$ are distinct numbers in $\{1,2,3,4\}.$ 

By the characteristic equation of $a_{\ell,0,0}$ and induction on $\ell$, for any $\ell\geq 4,$ the sequence $a_{\ell,0,0}$ satisfies that
\begin{equation}\label{eq:b.3}
\begin{split}
\frac{a_{\ell,0,0}}{q^{3\ell/2}}=\sum_{i=1}^\ell\frac{c_{\ell-i}}{q^{3(\ell-i)/2}}\frac{\alpha_i}{q^{3i/2}}=\sum_{j=1}^4\sum_{i=1}^\ell\frac{c_{\ell-i}}{q^{3(\ell-i)/2}}X_jz_j^{i-1}
\end{split}
\end{equation}
where $c_n$ is a sequence defined by 
\begin{equation*}
c_n=\begin{cases}\frac{\delta_1(v_{0,0,0})}{q^3+q^2+q+1}&\text{ if } n=0\\
\delta_1(v_{1,0,0})-\frac{q\delta_2(v_{0,0,0})}{q^2+1}&\text{ if } n=1\\
\delta_1(v_{2,0,0})-{{q}}\delta_2(v_{1,0,0})+\frac{q^3(q^2+q+1)\delta_3(v_{0,0,0})}{q^3+q^2+q+1}&\text{ if }n=2\\
\delta_1(v_{n,0,0})-{{q}}\delta_2(v_{n-1,0,0})+q^3\delta_3(v_{n-2,0,0})&\text{ otherwise}.
\end{cases}
\end{equation*}
By the choice of $\delta_i$, there exists a constant $\mathcal{C}_1$ such that for any $i$, $\left|\frac{c_i}{q^{3i/2}}\right|< \mathcal{C}_1\epsilon$. Using this, we have for any $c>0$,
\begin{equation*}
\begin{split}
\left|\frac{a_{\ell,0,0}}{q^{3(\ell+c)/2}}\right|
\leq \mathcal{C}_1\epsilon\sum_{j=1}^4\sum_{i=0}^\ell|X_j|\left|\frac{z_j}{q^{c/2}}\right|^{i-1}\leq \frac{2\mathcal{C}_1q^{c/2}\epsilon|X_1|}{|z_1|-q^{c/2}}\left|\frac{z_1}{q^{c/2}}\right|^{\ell}+o(|z_1|^\ell).
\end{split}
\end{equation*}
Since $\left|\frac{f(v_{\ell,0,0})}{q^{(3+c)\ell/2}}\right|\geq |C_1|\left|\frac{z_1}{q^{c/2}}\right|^\ell-o(|z_1|^{\ell})$, we have
$$\left|\frac{h(v_{\ell,0,0})}{q^{(3+c)\ell/2}}\right|\geq \left|\frac{f(v_{\ell,0,0})}{q^{(3+c)\ell/2}}\right|-\left|\frac{a_{\ell,0,0}}{q^{(3+c)\ell/2}}\right|\geq \left(|C_1|-\frac{2\mathcal{C}_1q^{c/2}\epsilon|X_1|}{|z_1|-q^{c/2}}\right)\left|\frac{z_1}{q^{c/2}}\right|^\ell-o(|z_1|^\ell).$$
Let $\{\beta_n\}_n$ be a sequence defined by $d_{-2}=d_{-1}=d_{0}=0$, $d_1=1$ and
$$\beta_n=\lambda_3\beta_{n-1}-q\lambda_2\beta_{n-2}+q^3\lambda_1\beta_{n-3}-q^6\beta_{n-4}$$
The characteristic equation of $\beta_n$ shows that if $z_i\neq z_j$ for any $i,j$
$$\frac{\beta_n}{q^{3n/2}}=X_{123}(z_1z_2z_3)^{n-1}+X_{124}(z_1z_2z_4)^{n-1}+X_{134}(z_1z_3z_4)^{n-1}+X_{234}(z_2z_3z_4)^{n-1}.$$
Here $X_{ijk}=\frac{(z_l)^{-3}}{(z_l^{-1}-z_i^{-1})(z_l^{-1}-z_j^{-1})(z_l^{-1}-z_k^{-1})}$ where $i,j,k,l$ are distinct numbers in $\{1,2,3,4\}$.
Similarly, using the equations in \eqref{eq:4.7}, we have that for any $\ell\geq 4$,
\begin{equation}\label{eq:b..4}
\begin{split}
\frac{a_{\ell,\ell,\ell}}{q^{3\ell/2}}=\sum_{i=1}^\ell\frac{d_{\ell-i}}{q^{3(\ell-i)/2}}\frac{\beta_i}{q^{3i/2}}
\end{split}
\end{equation}
where $d_i$ is a sequence defined by 
\begin{equation*}d_n=\begin{cases}\frac{\delta_3(v_{0,0,0})}{q^3+q^2+q+1}&\text{ if }n=0\\
\delta_3(v_{1,1,1})-\frac{q\delta_2(v_{0,0,0})}{q^2+1}&\text{ if }n=1\\
\delta_3(v_{2,2,2})-{{q}}\delta_2(v_{1,1,1})+\frac{q^3(q^2+q+1)\delta_1(v_{0,0,0})}{q^3+q^2+q+1}&\text{ if } n=2\\
\delta_3(v_{n,n,n})-{{q}}\delta_2(v_{n-1,n-1,n-1})-q^3\delta_3(v_{n-2,n-2,n-2})&\text{ otherwise}.
\end{cases}
\end{equation*}
Analogous to the case $a_{\ell,0,0}$, there exists a constant $\mathcal{C}_3$ such that
$$\left|\frac{h(v_{\ell,\ell,\ell})}{q^{(3+c)\ell/2}}\right|
\geq \left(|C_3|-\frac{2\mathcal{C}_3q^{c/2}\epsilon|X_{123}|}{|z_1z_2z_3|-q^{c/2}}\right)\left|\frac{z_1z_2z_3}{q^{c/2}}\right|^\ell-o(|z_1z_2z_3|^\ell).$$

The remaining part of this section is the case $a_{\ell,\ell,0}$. By the first equation and the third equation in \eqref{eq:4,8}, we have
\begin{equation*}
\begin{split}
&-q^4(q+1)a_{\ell-1,\ell-2,1}-q^3a_{\ell-1,\ell-1,0}-q^2a_{\ell,\ell-2,0}+q^2\lambda_1a_{\ell-1,\ell-2,0}=-q^2\delta_1(v_{\ell-1,\ell-2,0})\\
&q^3a_{\ell-1,\ell-1,0}+q^2a_{\ell,\ell-2,0}+(q+1)a_{\ell+1,\ell,1}-\lambda_3a_{\ell,\ell-1,0}=\delta_3(v_{\ell,\ell-1,0}).
\end{split}
\end{equation*}
The sum of two equations is  
\begin{equation}\label{eq:b.5}
\begin{split}
&(q+1)a_{\ell+1,\ell,1}-\lambda_3a_{\ell,\ell-1,0}+q^2\lambda_1a_{\ell-1,\ell-2,0}-q^4(q+1)a_{\ell-1,\ell-2,1}\\
&=\delta_3(v_{\ell,\ell-1,0})-q^2\delta_1(v_{\ell-1,\ell-2,0}).
\end{split}
\end{equation}
It follows from the first and third equation in \eqref{eq:4.6} that
\begin{equation}\label{eq:b..6}
\begin{split}
&(q+1)a_{\ell+1,\ell,0}-\lambda_1a_{\ell,\ell,0}-q^4(q+1)a_{\ell-1,\ell-2,0}+\lambda_3q^2a_{\ell-1,\ell-1,0}\\
&=\delta_1(v_{\ell,\ell,0})-q^2\delta_3(v_{\ell-1,\ell-1,0}).
\end{split}
\end{equation}
Using \eqref{eq:b.5} and \eqref{eq:b..6}, we have the following three equations:
\begin{itemize}
\item multiplying $-q(q+1)$ by \eqref{eq:b.5},
\item changing $\ell$ to $\ell-1$ of \eqref{eq:b..6} and multiplying $-\lambda_3q$ and
\item changing $\ell$ to $\ell-2$ and multiplying $\lambda_1q^3$. 
\end{itemize}
The sum of three equations is  
\begin{equation}\label{eq:b.6}
\begin{split}
&-q(q+1)^2a_{\ell+1,\ell,1}+\lambda_1\lambda_3qa_{\ell-1,\ell-1,0}+q^5(q+1)^2a_{\ell-1,\ell-2,1}+\lambda_3q^5(q+1)a_{\ell-2,\ell-3,0}\\
&-(\lambda_1^2+\lambda_3^2)q^3a_{\ell-2,\ell-2,0}+\lambda_1\lambda_3q^5a_{\ell-3,\ell-3,0}-\lambda_1q^7(q+1)a_{\ell-3,\ell-4,0}\\
&=-q(q+1)\delta_3(v_{\ell,\ell-1,0})+q^3(q+1)\delta_1(v_{\ell-1,\ell-2,0})-\lambda_3q\delta_1(v_{\ell-1,\ell-1,0})\\
&+\lambda_3q^3\delta_3(v_{\ell-2,\ell-2,0})+\lambda_1q^3\delta_1(v_{\ell-2,\ell-2,0})-\lambda_1q^5\delta_3(v_{\ell-3,\ell-3,0})
\end{split}
\end{equation}
The sum of \eqref{eq:b.6} and the equation obtained by multiplying \eqref{eq:b.5} by $q^5(q+1)$ and changing $\ell$ to $\ell-2$ is
\begin{equation}\label{eq:b.7}
\begin{split}
&-q(q+1)^2a_{\ell+1,\ell,1}+\lambda_1\lambda_3qa_{\ell-1,\ell-1,0}+2q^5(q+1)^2a_{\ell-1,\ell-2,1}\\
&-(\lambda_1^2+\lambda_3^2)q^3a_{\ell-2,\ell-2,0}+\lambda_1\lambda_3q^5a_{\ell-3,\ell-3,0}-q^9(q+1)a_{\ell-3,\ell-4,1}\\
&=-q(q+1)\delta_3(v_{\ell,\ell-1,0})+q^3(q+1)\delta_1(v_{\ell-1,\ell-2,0})\\
&+q^5(q+1)\delta_3(v_{\ell-2,\ell-3,0})-q^7(q+1)\delta_1(v_{\ell-3,\ell-4,0})\\
&-\lambda_3q\delta_1(v_{\ell-1,\ell-1,0})+\lambda_3q^3\delta_3(v_{\ell-2,\ell-2,0})+\lambda_1q^3\delta_1(v_{\ell-2,\ell-2,0})-\lambda_1q^5\delta_3(v_{\ell-3,\ell-3,0})
\end{split}
\end{equation}
The second equation of \eqref{eq:4.6} shows that
\begin{equation}\label{eq:b.8}
a_{\ell+1,\ell+1,0}+q(a+1)^2a_{\ell+1,\ell,1}-\lambda_2a_{\ell,\ell,0}+q^4a_{\ell-1,\ell-1,0}=\delta_2(v_{\ell,\ell,0}).
\end{equation}
Using \eqref{eq:b.8}, it is possible to eliminate the terms of the form $a_{\ell+1,\ell,1}$ in \eqref{eq:b.7}. Then for any $\ell\geq 5$, we have
\begin{equation}\label{eq:b.10}
\begin{split}
&a_{\ell+1,\ell+1,0}-\lambda_2a_{\ell,\ell,0}+(\lambda_1\lambda_3-q^3)qa_{\ell-1,\ell-1,0}-(\lambda_1^2+\lambda_3^2-2\lambda_2q)q^3a_{\ell-2,\ell-2,0}\\
&+(\lambda_1\lambda_3-q^3)q^5a_{\ell-3,\ell-3,0}-\lambda_2q^8a_{\ell-4,\ell-4,0}+q^{12}a_{\ell-5,\ell-5,0}\\
&=\delta_2(v_{\ell,\ell,0})-2q^4\delta_2(v_{\ell-2,\ell-2,0})+q^8\delta_2(v_{\ell-4,\ell-4,0})-q(q+1)\delta_3(v_{\ell,\ell-1,0})\\
&+q^3(q+1)\delta_1(v_{\ell-1,\ell-2,0})-\lambda_3q\delta_1(v_{\ell-1,\ell-1,0})+\lambda_3q^3\delta_3(v_{\ell-2,\ell-2,0})+\lambda_1q^3\delta_1(v_{\ell-2,\ell-2,0})\\
&-\lambda_1q^5\delta_3(v_{\ell-3,\ell-3,0})+q^5(q+1)\delta_3(v_{\ell-2,\ell-3,0})-q^7(q+1)\delta_1(v_{\ell-3,\ell-4,0}).
\end{split}
\end{equation}
 for any $\ell\geq5$, let $e_{\ell+1}$ be the right hand side of \eqref{eq:b.10}.

Let $\{\gamma_n\}_n$ be a sequence defined by $\gamma_{-n}=0$, $\gamma_1=1$ and
\begin{equation*}
\begin{split}
\gamma_n=&\lambda_2\gamma_{n-1}-(\lambda_1\lambda_3-q^3)q\gamma_{n-2}+(\lambda_1^2+\lambda_3^2-2\lambda_2q)q^3\gamma_{n-3}\\
&-(\lambda_1\lambda_3-q^3)q^5\gamma_{n-4}+\lambda_2q^8\gamma_{n-5}+q^{12}\gamma_{n-6}
\end{split}
\end{equation*}
for any $n>0$. 
Then $\gamma_n$ satisfies that 
\begin{equation*}
\begin{split}
\gamma_n=X_{12}(z_1z_2)^{n-1}&+X_{13}(z_1z_3)^{n-1}+X_{14}(z_1z_4)^{n-1}\\
&+X_{23}(z_2z_3)^{n-1}+X_{24}(z_2z_4)^{n-1}+X_{34}(z_3z_4)^{n-1}.
\end{split}
\end{equation*}
Here $X_{ij}=\frac{z_i^5z_j^5}{(z_iz_j-z_iz_k)(z_iz_j-z_iz_k)(z_iz_j-z_iz_l)(z_iz_j-z_jz_k)(z_iz_j-z_jz_l)}$ where $i,j,k,l$ are distinct numbers in $\{1,2,3,4\}.$

Using \eqref{eq:b.5}, \eqref{eq:b.6} and the second equation of \eqref{eq:4.6}, we have a constants $e_0,e_1,\dots ,e_5$, and $\mathcal{C}_3$ such that 
\begin{equation}\label{eq:b.13}
\frac{a_{\ell,\ell,0}}{q^{2\ell}}=\sum_{i=1}^\ell\frac{e_{\ell-i}}{q^{2(\ell-i)}}\frac{\gamma_i}{q^{2i}}.
\end{equation}
for any $\ell<5$ and $\left|\frac{e_\ell}{q^{2\ell}}\right|<\epsilon \mathcal{C}_2$ for any $\ell$.
Using induction on $\ell$ and \ref{eq:b.10}, the equation \eqref{eq:b.13} holds for any $\ell$.

Analogous to the case $a_{\ell,0,0}$ and $a_{\ell,\ell,\ell}$, we obtain
$$\left|\frac{h(v_{\ell,\ell,0})}{q^{(2+c)\ell}}\right|
\geq \left(|C_2|-\frac{2\mathcal{C}_2q^{c}\epsilon|X_{12}|}{|z_1z_2|-q^{c}}\right)\left|\frac{z_1z_2}{q^{c}}\right|^\ell-o(|z_1z_2|^\ell).$$

\end{document}